\numberwithin{equation}{subsection}
\def\tagform@#1{\maketag@@@{(\ignorespaces E{#1}\unskip\@@italiccorr)}}
\renewcommand{\eqref}[1]{\textup{{\normalfont(E\ref{#1}}\normalfont)}}
\theoremstyle{plain}
\newtheorem{theorem}{Theorem}[section]
\newtheorem{lemma}[theorem]{Lemma}
\newtheorem{proposition}[theorem]{Proposition}
\newtheorem{corollary}[theorem]{Corollary}
\theoremstyle{definition}
\newtheorem{definition}[theorem]{Definition}
\newtheorem{example}[theorem]{Example}
\newtheorem*{remark*}{Remark}
\newtheorem{questions}[theorem]{Questions}
\newtheorem{notation}[theorem]{Notation}
\DeclareMathOperator{\gldim}{gldim}
\DeclareMathOperator{\Ext}{Ext}
\DeclareMathOperator{\gr}{gr}
\DeclareMathOperator{\Aut}{Aut}
\DeclareMathOperator{\injdim}{injdim}
\DeclareMathOperator{\GKdim}{GKdim}
\newcommand{\fm}{\mathfrak{m}}
\newcommand{\mf}{\mathfrak}
\newcommand{\id}{\operatorname{id}}
\newcommand{\NN}{{\mathbb N}}
\newcommand{\QQ}{{\mathbb Q}}
\newcommand{\ZZ}{{\mathbb Z}}
\newcommand\cN{{\mathcal N}}
\newcommand\nat{{\rm($\natural$)}}
\newcommand\Btil{\widetilde{B}}
\newcommand\Gtil{\widetilde{G}}
\newcommand\Itil{\widetilde{I}}
\newcommand\Mtil{\widetilde{M}}
\newcommand\ptil{\widetilde{p}}
\newcommand\xtil{\widetilde{x}}
\newcommand\ytil{\widetilde{y}}
\newcommand\lambdatil{\widetilde{\lambda}}
\newcommand\pitil{\widetilde{\pi}}
\newcommand\chitil{\widetilde{\chi}}
\newcommand\co{\operatorname{co}}
\newcommand\eps{\varepsilon}
\newcommand\Znonneg{\ZZ_{\ge0}}
\newcommand\Zpos{\ZZ_{>0}}
\newcommand\kx{k^{\times}}
\newcommand\gfrak{{\mathfrak g}}
\newcommand\mfrak{{\mathfrak m}}
\DeclareMathOperator{\op}{op}
\DeclareMathOperator{\Id}{Id}
\DeclareMathOperator{\wgt}{wt}
\begin{document}

\title[Non-affine Hopf algebras]
{Non-affine Hopf algebra domains of\\
Gelfand-Kirillov dimension two}

\author{K.R. Goodearl and J.J. Zhang}

\address{Goodearl: Department of Mathematics 
University of California at Santa Barbara,
Santa Barbara, CA 93106}

\email{goodearl@math.ucsb.edu} 

\address{Zhang: Department of Mathematics, Box 354350,
University of Washington, Seattle, Washington 98195, USA}

\email{zhang@math.washington.edu}

\begin{abstract}
We classify all non-affine Hopf algebras $H$ over an 
algebraically closed field $k$ of characteristic zero that 
are integral domains of Gelfand-Kirillov dimension two and 
satisfy the condition $\Ext^1_H(k, k) \neq 0$. The affine ones 
were classified by the authors in 2010 \cite{GZ}.
\end{abstract}

\subjclass[2000]{16T05, 57T05, 16P90, 16E65}

%16T05 57T05 (2010-now) Hopf algebras and their applications
%16E65  (2000-now) Homological conditions on rings.
%16P90   (1991-now) Growth rate, Gelfand-Kirillov dimension

\keywords{Hopf algebra, Gelfand-Kirillov dimension, pointed, non-affine}

\maketitle

%%%%%%%%%%%%%%%%%%%%%%%%%%%%%%%%%%%%%%%%%%%%%%%%%%%%%%%%%%%%%%%%%%%%%

\setcounter{section}{-1}
%%%%%%%%%%%%%%%%%%%%%%%%%%%%%%%%%%%%%%%%%
%%%%%%%%%%%%%%%%%%%%%%%
\section{Introduction}
\label{xxsec0}

Throughout let $k$ be a base field that is algebraically closed  
of characteristic zero. All algebras and Hopf algebras are assumed to be $k$-algebras. The main result of 
\cite{GZ} is the classification of the affine Hopf $k$-algebras $H$ that 
are integral domains of Gelfand-Kirillov dimension two and satisfy the 
extra homological condition  
\begin{equation}
\label{E0.0.1}  \usetagform{orig}  \tag{$\natural$}
\Ext^1_H(k, k) \neq 0,
\end{equation}
where $k$ also denotes the trivial module $H/\ker \epsilon$. We say that $H$ 
is {\it affine} if it is finitely generated over $k$ as an algebra. 
Geometrically, the condition $(\natural)$ means that the tangent 
space of the corresponding quantum group is non-trivial. By 
\cite[Theorem 3.9]{GZ}, the condition $(\natural)$ is equivalent to 
the condition that the corresponding quantum group contains a classical 
algebraic subgroup of dimension one. The authors asked whether the 
condition $(\natural)$ is automatic when $H$ is an affine domain of 
Gelfand-Kirillov dimension (or GK-dimension, for short) two 
\cite[Question 0.3]{GZ}. This question was answered negatively in \cite{WZZ1}. 
Some affine Hopf algebra domains of GK-dimension two that do not satisfy 
$(\natural)$ were given and studied in \cite{WZZ1}.

All Hopf domains of GK-dimension one are listed in \cite[Proposition 2.1]{GZ}.
All affine Hopf domains of GK-dimension two satisfying $(\natural)$ are listed in
\cite[Theorem 0.1]{GZ}. The main goal of the present paper is to classify 
{\it non-affine} Hopf domains of GK-dimension two satisfying $(\natural)$. 
Together with \cite[Theorem 0.1]{GZ}, this provides a complete list of all
Hopf domains of GK-dimension two satisfying $(\natural)$:

\begin{theorem}
\label{xxthm0.1} 
Let $H$ be a Hopf domain of GK-dimension two satisfying 
$(\natural)$. Then it is isomorphic, as a Hopf algebra, to one of 
the following.
\begin{enumerate}
\item[(1)]
$kG$ where $G$ is a subgroup of ${\mathbb Q}^{2}$ 
containing ${\mathbb Z}^2$.
\item[(2)]
$kG$ where $G = L\rtimes R$ for some subgroup $L$ of 
${\mathbb Q}$ containing ${\mathbb Z}$
and some subgroup $R$ of $\ZZ_{(2)}$
containing ${\mathbb Z}$.
\item[(3)] $U(\mf{g})$ where $\mf{g}$ is a $2$-dimensional Lie algebra over $k$.
\item[(4)]
$A_{G}(e,\chi)$ where $G$ is a nonzero subgroup of
${\mathbb Q}$ {\rm{[Example \ref{xxex1.2}]}}.
\item[(5)]
$C_{G}(e,\tau)$ where $G$ is a nonzero subgroup of
${\mathbb Q}$ {\rm{[Example \ref{xxex1.3}]}}.
\item[(6)]
$B_G(\{p_i\},\chi)$ where $G$ is a nonzero subgroup of
${\mathbb Q}$ {\rm{[Construction \ref{xxsec2.1}]}}.
\end{enumerate}
\end{theorem}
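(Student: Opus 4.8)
The plan is to reduce the non-affine classification to the affine one of \cite[Theorem 0.1]{GZ} by a direct-limit argument, using the condition $(\natural)$ to control the structure of finitely generated Hopf subalgebras. First I would recall the general principle that any Hopf algebra is the directed union of its affine (finitely generated) Hopf subalgebras; thus $H = \varinjlim H_i$ where each $H_i$ is an affine Hopf subalgebra. Since $H$ is a domain of GK-dimension two and GK-dimension is monotone under subalgebras, each $H_i$ is an affine Hopf domain of GK-dimension $\le 2$. The key input is that $(\natural)$ for $H$, together with the identification of $\Ext^1_H(k,k)$ with the space of primitive-like elements / the cotangent space, should propagate: I would show that one may choose the directed system so that each $H_i$ satisfies $(\natural)$ as well (or has GK-dimension $\le 1$, in which case it is on the list of \cite[Proposition 2.1]{GZ}). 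The main structural dichotomy will be whether the $H_i$ are eventually of GK-dimension exactly two, or whether $H$ is a union of GK-dimension-one Hopf subalgebras.

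Next I would run the classification of \cite[Theorem 0.1]{GZ} on each $H_i$ of GK-dimension two satisfying $(\natural)$: each is one of the finitely many types there (group algebras of rank-two abelian or metabelian subgroups of $\QQ^2$, enveloping algebras of $2$-dimensional Lie algebras, or the families $A$, $B$, $C$). Then the task becomes: understand how such algebras can sit inside one another compatibly with the Hopf structure, and take the colimit. I would organize this by the "type" invariant — commutative versus noncommutative, the presence of a nontrivial group-like part, the presence of a nontrivial Lie-algebra part, etc. — and argue that the type is stable along the directed system (e.g., a connected Hopf algebra cannot contain a nontrivial grouplike, $U(\mf g)$-type pieces must map to $U(\mf g)$-type pieces, and the parameters $e$, $\chi$, $\tau$, $\{p_i\}$ are forced to be compatible). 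For the group-algebra cases this is just the statement that a directed union of group algebras of subgroups of $\QQ^2$ is again such a group algebra, giving (1) and (2) after checking which subgroups $G \le \QQ^2$ yield GK-dimension two and $(\natural)$ — here $R \le \ZZ_{(2)}$ appears because of a $2$-torsion phenomenon in the semidirect-product case. For the enveloping-algebra case, any affine $U(\mf g)$ with $\dim \mf g = 2$ is already all of its own colimit over itself, so (3) is absorbed. The genuinely infinite families (4), (5), (6) arise as colimits of their own finite-$G$ truncations as $G$ ranges over the finitely generated subgroups of $\QQ$; I would verify that $A_G$, $C_G$, $B_G$ for arbitrary nonzero $G \le \QQ$ make sense as Hopf algebras (this is Example \ref{xxex1.2}, Example \ref{xxex1.3}, Construction \ref{xxsec2.1}) and that no new types are created in the limit.

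The hard part, I expect, is \emph{rigidity of the parameters} along the directed system and ruling out "mixed" colimits — that is, showing a Hopf algebra cannot be simultaneously a nontrivial limit of, say, type-$A$ and type-$C$ pieces, and that the discrete data ($e$, the cocycle $\chi$ or $\tau$, the prime set $\{p_i\}$) stabilize rather than drift. This requires identifying these data intrinsically: e.g., $e$ should be read off from the action on a distinguished grouplike or from the eigenvalues of a semisimple derivation, $\{p_i\}$ from the primes dividing denominators that actually occur in $G$, and the cocycle from $\HB^2$ of the relevant group with the appropriate coefficients. A secondary subtlety is the GK-dimension bookkeeping: a directed union of GK-dimension-one algebras can have GK-dimension two (e.g., $k[\QQ]$ has GK-dimension one but $k[\QQ^2]$ has GK-dimension two), so I must be careful that the colimit has GK-dimension exactly two and not more, and that $(\natural)$ is preserved in the limit — the latter because $\Ext^1_H(k,k) = \varinjlim \Ext^1_{H_i}(k,k)$ when the system is suitably chosen, so nonvanishing at one stage forces nonvanishing in the limit. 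Once these rigidity and dimension-counting points are settled, the list (1)--(6) is exactly the set of colimits, and the theorem follows by combining with \cite[Theorem 0.1]{GZ}.
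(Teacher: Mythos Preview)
Your opening move is the problem: it is \emph{not} a general principle that a Hopf algebra is the directed union of its affine Hopf subalgebras. That property is exactly ``locally affine'' (Definition~\ref{xxdef0.3}(1)), and establishing it for the $H$ in question is the main technical content of the paper (Theorem~\ref{xxthm4.4}); indeed the paper poses it as an open question for arbitrary Hopf domains of finite GK-dimension (Question~\ref{xxque0.4}(2)). For pointed Hopf algebras it is known (Lemma~\ref{xxlem3.8}(4)), but you do not yet know $H$ is pointed --- that is deduced \emph{from} local affineness together with $(\natural)$ (Proposition~\ref{xxpro3.10}). So your direct-limit argument has no directed system to run on until you supply an independent proof of local affineness, and nothing in your sketch addresses this.

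The paper's route to local affineness is quite different from a limit argument: it passes to the commutative quotient $K = H/\bigcap_i \mathfrak{m}^i$, which has GK-dimension one by Lemma~\ref{xxlem4.3}, and then uses the induced $K$-comodule algebra structures on $H$ to produce either a bigrading with one-dimensional components or a Hopf subalgebra $H_0$ over which $H$ is an Ore extension; in each case local affineness follows from Lemma~\ref{xxlem3.8}(5) or (6) and Proposition~\ref{xxpro3.9}. Once local affineness is in hand, your outline is roughly in the right spirit, but the ``rigidity of parameters'' you flag as the hard part is handled in the paper not by cohomological identification of $e$, $\chi$, $\tau$, $\{p_i\}$ but by a direct analysis of skew primitives (Proposition~\ref{xxpro3.4}) and the resulting containment constraints among types A, B, C (Corollary~\ref{xxcor3.6}); your proposed intrinsic characterizations via $\HB^2$ and eigenvalues are not developed enough to be usable as stated.
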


In part (2) of the above theorem, $\ZZ_{(2)}$ 
denotes the localization of the ring $\ZZ$ at the maximal ideal 
$(2)$, that is, the ring of rational numbers with odd denominators. 

There are more Hopf domains of GKdim two if the hypothesis 
$(\natural)$ is removed from Theorem \ref{xxthm0.1}, see \cite{WZZ1}. 

We also study some algebraic properties of the algebras in
Theorem \ref{xxthm0.1}. The following is an easy consequence
of Theorem \ref{xxthm0.1}.

\begin{corollary}
\label{xxcor0.2} Let $H$ be as in Theorem {\rm\ref{xxthm0.1}}.
Then the following hold.
\begin{enumerate}
\item[(1)]
$H$ is pointed and generated by grouplike
and skew primitive elements. 
\item[(2)]
$H$ is countable dimensional over $k$. 
\item[(3)]
The antipode of $H$ is bijective.
\item[(4)]
Let $K$ be a Hopf subalgebra of $H$. Then $H_K$ and $_KH$ are free.
\item[(5)]
If $H$ is as in parts {\rm(1-5)} of Theorem {\rm\ref{xxthm0.1}},
then $2\leq \gldim H\leq 3$, 
while if $H$ is as in part {\rm(6)}, then $\gldim H = \infty$.
\end{enumerate}
\end{corollary}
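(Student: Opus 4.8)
The plan is to prove the five assertions in order, reading off each one from the explicit list in Theorem \ref{xxthm0.1} together with standard facts about group algebras, enveloping algebras, and the specific families $A_G(e,\chi)$, $C_G(e,\tau)$, $B_G(\{p_i\},\chi)$. First, for part (1), I would observe that a group algebra $kG$ is always pointed (its coradical is $kG$ itself, spanned by grouplikes) and generated by grouplikes, and that $U(\mf g)$ is pointed with coradical $k$ and is generated by the primitive elements of $\mf g$; for the three remaining families it should already have been recorded in the cited Examples and Construction that each algebra is generated by a grouplike (or a group of grouplikes) together with one or more skew primitive elements, and that the coradical is the group algebra of the grouplikes, hence these are pointed. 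Part (2) is immediate: each $G$ appearing is a countable abelian group (a subgroup of $\QQ$ or $\QQ^2$, or an iterated extension of such), so $kG$ is countable dimensional; $U(\mf g)$ with $\dim_k\mf g = 2$ has a countable PBW basis; and the algebras in (4)--(6) are built as finitely many ring-theoretic extensions (Ore extensions / localizations) of such group algebras, so remain countable dimensional.

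For part (3), I would note that $kG$ always has bijective antipode (the antipode is $g\mapsto g^{-1}$, an involution), and $U(\mf g)$ has bijective antipode (squares to the identity on primitives, hence is an involution, using $\ch k = 0$ so that $U(\mf g)$ is as usual); for the families in (4)--(6) one checks that the antipode, whose formula is given in the defining Example/Construction, is invertible — typically because $S^2$ is an inner or diagonal automorphism, so $S$ is bijective. Part (4) is where the main work lies: the freeness of $H$ over a Hopf subalgebra $K$. For the classical cases $kG$ a Hopf subalgebra is $kK'$ for a subgroup $K'\le G$ (here one uses that all the grouplikes of $kG$ form exactly $G$ and that a Hopf subalgebra of $kG$ over a field is spanned by a subgroup), and a group algebra is always free — indeed a basis is given by coset representatives — as both a left and right module over $kK'$; for $U(\mf g)$ one invokes the classical theorem that $U(\mf g)$ is free over $U(\mf h)$ for any Lie subalgebra $\mf h$ (via a PBW basis extending one for $\mf h$), together with the fact that in GKdim two every Hopf subalgebra of $U(\mf g)$ is again an enveloping algebra $U(\mf h)$. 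For the families (4)--(6) one must first classify the Hopf subalgebras $K\subseteq H$ — these should be among the algebras of the same type with smaller parameter group $G'\subseteq G$, or the group algebra of the grouplikes, or $k$ itself — and then exhibit an explicit module basis; since each of these algebras is a skew polynomial extension of $k[x^{\pm 1}]$ or of a group algebra, filtering by degree in the adjoined variable reduces freeness to freeness at the level of the coefficient group algebra, which is the classical case. I expect that most of this was already set up in the sections constructing these algebras, so the corollary proof can cite those results.

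Finally, for part (5), the global dimension statement: $kG$ for $G$ a torsion-free abelian group of rank $r$ has $\gldim = r$ when $G$ is finitely generated (i.e. $G\cong\ZZ^r$), but for non-finitely-generated $G\subseteq\QQ^2$ the global dimension can rise — indeed for $G\subseteq\QQ$ not finitely generated one has $\gldim kG = 2$, and for the rank-two cases one gets values up to $3$; the enveloping algebra $U(\mf g)$ with $\dim\mf g = 2$ has $\gldim = 2$. In all of cases (1)--(5) one then checks $2\le\gldim H\le 3$ by combining these computations, using that $\gldim\ge\GKdim = 2$ on the lower end (via, e.g., the fact that these algebras are Noetherian or coherent of the stated GKdim and satisfy enough regularity, or directly from $\Ext$ computations). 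For case (6), the algebra $B_G(\{p_i\},\chi)$ is designed to have infinite global dimension — this is presumably established in Construction \ref{xxsec2.1} or the section analyzing it, where one exhibits a module (typically $k$ itself) with an infinite minimal projective resolution, for instance because $B_G$ contains a subalgebra like $k[x]/(\text{relation})$-flavored quotient or a group algebra of a group with torsion-like behavior in its homology, forcing $\Ext^n_H(k,k)\ne 0$ for all $n$. The main obstacle is part (4): one genuinely needs the classification of Hopf subalgebras of the exotic families (4)--(6) before freeness can even be stated, and assembling that — rather than the freeness argument itself, which is a degree-filtration reduction to the group-algebra case — is the substantive step.
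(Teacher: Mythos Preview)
Your approach to parts (1)--(3) is broadly fine, though for (3) the paper does not verify bijectivity case-by-case but simply invokes Skryabin's theorem \cite[Theorem A(ii)]{Sk}, which gives bijectivity of the antipode for any Hopf domain of finite GK-dimension in one stroke.

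The real divergence is in parts (4) and (5). For (4), you propose to classify all Hopf subalgebras of each family and then exhibit explicit module bases via degree filtrations. The paper does nothing of the sort: it cites Radford's theorem \cite{Ra2} that a pointed Hopf algebra is free over every Hopf subalgebra, which, given part (1), settles (4) in one line. Your expectation that the classification of Hopf subalgebras ``was already set up in the sections constructing these algebras'' is not met --- no such classification appears in the paper, and carrying it out for the $B_G(\{p_i\},\chi)$ family in particular would be substantial extra work. So while your route is not wrong in principle, it replaces a one-line citation by a project.

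For (5), your sketch is too vague to succeed as written. The inequality $\gldim \ge \GKdim$ is not a theorem for these non-noetherian algebras, and your account of why $B_G(\{p_i\},\chi)$ has infinite global dimension (``torsion-like behavior in its homology'') is not an argument. The paper's proof is a clean sandwich: write $H$ as an ascending union of affine Hopf subalgebras $K$ (Lemma \ref{xxlem3.11}), use the known global dimensions of the affine pieces from \cite[Proposition 0.2]{GZ}, get $\gldim H \ge \gldim K$ from freeness of $H$ over $K$ (part (4)) via \cite[Theorems 7.2.6, 7.2.8]{MR}, and get $\gldim H \le \sup_n \gldim K_n + 1$ from the Berstein--Osofsky direct-limit bound (Lemma \ref{xxlem5.5}). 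In particular, the infinite global dimension in case (6) comes from the affine type-B subalgebras already having infinite global dimension, transferred upward by freeness --- not from any direct resolution over $H$.
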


By \cite[Proposition 0.2(b)]{GZ}, if $H$ in Theorem \ref{xxthm0.1} 
is noetherian, then $\injdim H=2$. So we conjecture that $\injdim H=3$
if $H$ in Theorem \ref{xxthm0.1} is non-noetherian.

There have been extensive research activities concerning infinite 
dimensional Hopf algebras (or quantum groups) in recent years. 
The current interests are mostly on noetherian and/or affine 
Hopf algebras. One appealing research direction is to 
understand some global structure of noetherian and/or affine 
and/or finite GK-dimensional Hopf algebras. 

A classical result of Gromov states that a finitely generated group $G$ 
has polynomial growth,  or equivalently, the associated group algebra 
has finite GK-dimension, if and only if $G$ has a nilpotent subgroup of 
finite index \cite{Gr}. So group algebras of finite GK-dimension are 
understood. It is natural to look for a Hopf analogue of this result, 
see \cite[Question 0.1]{WZZ2}. Another vague question is ``what can we 
say about a Hopf algebra of finite GK-dimension?''.  Let us mention a 
very nice result in this direction. Zhuang proved that every connected 
Hopf algebra of finite GK-dimension is a noetherian and affine domain 
with finite global dimension \cite{Zh}. Here the term ``connected'' 
means that the coradical is 1-dimensional. In general, the noetherian
and affine properties are not consequences of the finite GK-dimension
property. To have any sensible solution, we might restrict our attention 
to the domain case. A secondary goal of this paper is to promote 
research on Hopf domains of finite GK-dimension which are not
necessarily noetherian nor affine.

Let us start with some definitions.

\begin{definition}
\label{xxdef0.3}
Let $H$ be a Hopf algebra with antipode $S$.
\begin{enumerate}
\item[(1)]
$H$ is called {\it locally affine} if every finite subset of $H$ is 
contained in an affine Hopf subalgebra of $H$. 
\item[(2)]
$H$ is said to have {\it $S$-finite type} if there is a finite
dimensional subspace $V\subseteq H$ such that $H$ is generated
by $\bigcup_{i=0}^{\infty} S^i(V)$ as an algebra.
\item[(3)]
$H$ is said to satisfy (\emph{FF}) if for every Hopf subalgebra 
$K\subseteq H$, the $K$-modules $H_K$ and $_KH$ are faithfully flat.
\end{enumerate}
\end{definition}

It is clear that $H$ is affine if and only if $H$ is both
locally affine and of $S$-finite type. The question of whether
$H$ satisfies (FF) has several positive answers \cite{Ch, NZ, Ra1, 
Ra2, Ta1, Ta2}. In 1993, Montgomery asked if every Hopf 
algebra satisfies (FF) \cite[Question 3.5.4]{Mo}. A counterexample
was given in \cite{Sc}.  Hence Montgomery's question was modified 
to the Hopf algebras with bijective antipode. By a result of 
Skryabin \cite[Theorem A]{Sk}, every Hopf domain of finite
GK-dimension has bijective antipode. Prompted by 
Zhuang's result and Corollary \ref{xxcor0.2}, we ask

\begin{questions}
\label{xxque0.4}
Let $H$ be a Hopf domain of finite GK-dimension.
\begin{enumerate}
\item[(1)]
Is the $k$-dimension of $H$ countable, or equivalently, is $H$ 
countably generated as an algebra?
\item[(2)]
Is $H$ locally affine?
\item[(3)]
Is $H$ equal to the union of an ascending chain of affine Hopf subalgebras?
\item[(4)]
Is ``affine'' equivalent to ``noetherian''? See also \cite[Question 5.1]{WZ},
\cite[Questions D and E]{BG} and \cite[Question 2.4]{Go}.
\item[(5)]
Does $H$ satisfy (FF)?
\item[(6)]
Is $\injdim H$ bounded by a function of $\GKdim H$?
\item[(7)]
If $\gldim H$ is finite, is $\gldim H$ bounded by a function of $\GKdim H$?
\end{enumerate}
\end{questions}

If any of Questions  \ref{xxque0.4}(1,2,3) has a positive answer, that would
indicate that $H$ is somewhat close to being affine. We also have
the following result connecting some of these concepts. Note that 
pointed Hopf algebras satisfy (FF) by \cite{Ra2}.

\begin{theorem}
\label{xxthm0.5}
Let $H$ be a Hopf algebra that is left noetherian.
\begin{enumerate}
\item[(1)]
Suppose $H$ satisfies {\rm{(FF)}}. Then $H$ is of $S$-finite type. 
As a consequence, $\dim_k H$ is countable.
\item[(2)]
Suppose $H$ satisfies {\rm{(FF)}} and $H$ is locally affine.
Then $H$ is affine. 
\item[(3)]
If $H$ is pointed, then it is affine.
\end{enumerate}
\end{theorem}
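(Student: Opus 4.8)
The plan is to handle the three parts in order, using (FF) as the key leverage to pass between a Hopf algebra and its Hopf subalgebras.

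For part (1), I would start with the left noetherian hypothesis: the kernel $\ker\epsilon$ is a finitely generated left ideal, so there is a finite-dimensional subspace $W\subseteq H$ generating $H$ as a left module over the subalgebra $k+\ker\epsilon\cdot\ker\epsilon$ — more usefully, one can choose a finite-dimensional $W$ such that $H$ is generated as an algebra over any Hopf subalgebra containing a certain finite set. The idea is: pick generators $h_1,\dots,h_n$ of $\ker\epsilon$ as a left ideal, let $V_0$ be their span together with $1$, and let $K$ be the Hopf subalgebra generated by $\bigcup_{i\ge 0}S^i(V_0)$. I want to show $K=H$. Since $H$ is left noetherian and $K\subseteq H$ is a Hopf subalgebra, (FF) says $H_K$ is faithfully flat; combined with the fact that $H^+:=\ker\epsilon$ is generated as a left ideal by elements already lying in $K$, a standard faithful flatness argument (if $H = K + H^+ H$ then $H/KH = H^+ (H/KH)$, and faithful flatness of $H$ over $K$ forces $H/KH=0$ via the analogue of Nakayama available here) should give $H=KH$... more carefully, one uses that $K^+ H$ is a coideal and $H/K^+H$ with faithful flatness of $H$ over $K$ implies $K=H$ exactly when $H/K^+H = k$, which holds because the generators of $H^+$ lie in $K^+$. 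Thus $H$ is of $S$-finite type with the finite-dimensional space $V_0$, and hence $H$ is generated as an algebra by a countable set $\bigcup_i S^i(V_0)$, so $\dim_k H$ is countable.

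For part (2), combine part (1) with local affineness. By part (1) there is a finite-dimensional $V$ with $H$ generated as an algebra by $U:=\bigcup_{i\ge 0}S^i(V)$. Local affineness is not immediately enough because $U$ is infinite, so I would instead argue: $H$ is generated by $V$ together with the requirement of $S$-stability, but actually I want a single affine Hopf subalgebra containing $V$. Apply local affineness to the finite set $V\cup S(V)$ (a basis thereof): it lies in an affine Hopf subalgebra $K_0$. Then $S(K_0)\subseteq K_0$... no — Hopf subalgebras need not be $S$-stable in general, but if we take $K_0$ to be a Hopf \emph{sub}algebra it is closed under the antipode by definition of Hopf subalgebra. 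Hence $S^i(V)\subseteq K_0$ for all $i$, so $U\subseteq K_0$, so $H=K_0$ is affine. (The subtlety here — which I'd need to state carefully — is whether "affine Hopf subalgebra" in Definition 0.3(1) means closed under $S$; in the conventions of this paper Hopf subalgebras are $S$-stable, so this goes through.)

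For part (3), the cleanest route: pointed Hopf algebras satisfy (FF) by \cite{Ra2} (noted just before the theorem), so by part (1) $H$ is of $S$-finite type. It remains to show a left noetherian pointed Hopf algebra is locally affine, and then part (2) finishes. For local affineness of pointed $H$: any finite subset lies in a finitely generated subcoalgebra $C$ (coalgebras are locally finite), and the subalgebra generated by $C+S(C)$ together with iterates — here one uses the coradical filtration and that $H$ is pointed so the coradical is the group algebra of $G(H)$; a finite subset involves only finitely many grouplikes and lands in some finite stage $H_n$ of the coradical filtration, and the Hopf subalgebra it generates is affine because each $H_n$ is a finitely generated module over $H_0$-type pieces when $H$ is noetherian. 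I expect this last point — showing a left noetherian pointed Hopf algebra is locally affine, i.e. that finite subsets generate affine Hopf subalgebras — to be the main obstacle, since it requires controlling the coradical filtration together with the noetherian hypothesis; the $S$-finite type statement from part (1) does most of the work, and one combines it with the observation that the Hopf subalgebra generated by a finite set, being a quotient situation controlled by finitely many grouplikes and skew-primitive-type generators, is itself noetherian and of $S$-finite type, hence affine by iterating part (1) inside it.
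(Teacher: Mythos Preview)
Your argument for part (1) is correct in spirit but takes a different, more direct route than the paper. The paper argues by contradiction: assuming $H$ is not of $S$-finite type, it builds a strictly ascending chain $K_0 \subsetneq K_1 \subsetneq \cdots$ of $S$-finite type Hopf subalgebras, passes to their union $K$ (which inherits left noetherianness via (FF)), and then uses the ascending chain condition on the left ideals $HK_n^+$ to force $HK_N^+ = H^+$ for some $N$, contradicting Proposition~\ref{xxpro5.3}. You instead use noetherianness once, at the outset, to pick finitely many left-ideal generators $h_1,\dots,h_n$ of $H^+$; the Hopf subalgebra $K$ they generate then satisfies $HK^+ = H^+$ immediately, and the contrapositive of Proposition~\ref{xxpro5.3} gives $K=H$. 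Your approach is shorter and avoids the reduction step; the paper's approach has the mild advantage of not needing to verify that the algebra generated by $\bigcup_i S^i(V_0)$ is already a Hopf subalgebra. On that point you have a small gap: with $V_0$ merely the span of the $h_i$ and $1$, the subalgebra generated by $\bigcup_i S^i(V_0)$ need not be $\Delta$-closed, so ``the Hopf subalgebra generated by $\bigcup_i S^i(V_0)$'' may be strictly larger and not visibly of $S$-finite type. The fix is routine: enlarge $V_0$ to a finite-dimensional \emph{subcoalgebra} (local finiteness of coalgebras), after which each $S^i(V_0)$ is a subcoalgebra and the subalgebra they generate is a Hopf subalgebra.

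Part (2) is exactly the paper's argument. For part (3) your outline is right --- pointed implies (FF) by \cite{Ra2}, pointed implies locally affine, then apply (2) --- but your concern is misplaced: local affineness of pointed Hopf algebras holds in full generality with no noetherian hypothesis (this is Lemma~\ref{xxlem3.8}(4), due to Zhuang, with an alternate coradical-filtration proof given there). So there is no obstacle at all in part (3); you do not need to invoke noetherianness or iterate part (1) inside subalgebras.
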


Theorem \ref{xxthm0.5}(3) partially answers \cite[Question 5.1]{WZ}
in the pointed case, see also \cite[Question 2.4]{Go} and \cite[Question D]{BG}.

\subsection{Notation} 
\label{xxsec0.1}
Fix an algebraically closed base field $k$ of characteristic zero throughout.

Elements $u$ and $v$ of a $k$-algebra are said to \emph{quasi-commute} 
if $uv = qvu$ for some $q\in \kx$, in which case they are also said to 
\emph{$q$-commute}.

We shall reserve the term \emph{skew primitive} for $(1,g)$-skew 
primitive elements $z$, meaning that $g$ is grouplike and 
$\Delta(z) = z\otimes 1 + g\otimes z$. In this situation, $g$ 
is called the \emph{weight} of $z$, denoted $\wgt(z)$. 
General skew primitive elements can be normalized to the 
kind above, since if $w$ is $(a,b)$-skew primitive, then 
$a^{-1}w$ is $(1,a^{-1}b)$-skew primitive.

Let $G$ be an additive subgroup of $(\QQ,+)$. For  
convenience we sometimes identify it with the multiplicative 
$x$-power group, namely, 
$$G=\{x^{g}\mid g\in G\},$$
where $x^g x^h = x^{g+h}$ for $g,h\in G$ and $x^0=1$.
If $1\in G$, then we also write $x$ for $x^1$. Similarly, 
the group algebra $kG$ is identified with 
$\bigoplus_{g\in G} k \, x^g$. In Section \ref{xxsec2} we will
also use an additive submonoid $M\subseteq \QQ$. Then in this case, we identify
$M$ with the multiplicative $y$-power monoid, namely, 
$$M=\{y^{m}\mid m\in M\}.$$
By using these different notations, one sees the 
different roles played by $G$ and $M$ in Section \ref{xxsec2}. 

%%%%%%%%%%%%%%%%%%%%%%%
\subsection*{Acknowledgments}
The authors would like to thank Quanshui Wu for many useful conversations on
the subject and for sharing his ideas and proofs (see Section \ref{xxsec5})  
with them. The research of the first-named author was partially supported 
by the US National Security Agency (grant no.~H98230-14-1-0132), and that 
of the second-named author by the US National Science Foundation 
(grants no.~DMS 0855743 and DMS 1402863).

%%%%%%%%%%%%%%%%%%%%%%%%%%%%%%%%%%%%%%%%%
%%%%%%%%%%%%%%%%%%%%%%%
\section{Non-affine construction of types A and C}
\label{xxsec1}

We start by recalling a result of \cite{GZ} that classifies all
(not necessarily affine)
Hopf domains of GK-dimension one. Note that a domain of GK-dimension 
one is automatically commutative (e.g., \cite[Lemma 4.5]{GZ}).

\begin{lemma}
\label{xxlem1.1}
\cite[Proposition 2.1]{GZ}
Assume that a Hopf algebra $H$ is a domain of
GK-dimension one. Then $H$ is isomorphic to one of the following:
\begin{enumerate}
\item[(1)]
An enveloping algebra $U({\mathfrak g})$, where $\dim {\mathfrak g}= 1$.
\item[(2)]
A group algebra $k G$, where $G$ is infinite cyclic.
\item[(3)]
A group algebra $kG$, where $G$ is a non-cyclic torsionfree abelian group 
of rank $1$, i.e., a non-cyclic subgroup of $\QQ$.
\end{enumerate}
As a consequence, $H$ satisfies $(\natural)$.
\end{lemma}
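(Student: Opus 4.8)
The plan is to prove Lemma \ref{xxlem1.1} by first reducing to the commutative case, then analyzing the coradical filtration and the grouplike/primitive structure. First I would invoke the quoted fact that a domain of GK-dimension one is commutative (\cite[Lemma 4.5]{GZ}), so $H$ is a commutative Hopf algebra that is an integral domain of GK-dimension one over the algebraically closed field $k$ of characteristic zero. The strategy is to separate two cases according to whether $H$ has a nontrivial grouplike element or not, equivalently whether the coradical $H_0$ is $k$ or larger.

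\medskip

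In the connected case ($H_0 = k$), since $\ch k = 0$ a commutative connected Hopf algebra is an enveloping algebra $U(\mathfrak g)$ by the Cartier--Kostant--Milnor--Moore theorem; the primitives $P(H) = \mathfrak g$ form a Lie algebra (abelian here, since $H$ is commutative), and the GK-dimension of $U(\mathfrak g)$ equals $\dim \mathfrak g$, forcing $\dim \mathfrak g = 1$. This gives case (1). In the remaining case, the group $G$ of grouplike elements is nontrivial; then $kG \subseteq H$ is a Hopf subalgebra, $G$ is torsionfree (any torsion would force zero divisors or, more carefully, a finite subgroup whose group algebra is not a domain in char $0$ unless trivial), so $G$ is a nontrivial torsionfree abelian group, hence of rank at least one, and $\GKdim kG = \rank G \leq \GKdim H = 1$ gives $\rank G = 1$; thus $G$ embeds in $\QQ$. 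I would then argue that $H = kG$: one shows the inclusion $kG \hookrightarrow H$ cannot be proper, because a proper extension would either increase GK-dimension or introduce a nontrivial primitive/skew-primitive element on top of the group algebra, and a calculation (using that $H$ is a commutative domain of GK-dimension exactly one, so it embeds in a field of transcendence degree one) rules this out — any skew primitive $z$ with $\Delta z = z \otimes 1 + g \otimes z$ would, together with $G$, generate a subalgebra of GK-dimension $2$ unless $z$ already lies in $kG$. Splitting $G$ cyclic versus non-cyclic yields cases (2) and (3).

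\medskip

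For the final assertion — that $H$ satisfies $(\natural)$, i.e.\ $\Ext^1_H(k,k) \neq 0$ — I would check it case by case. For $U(\mathfrak g)$ with $\dim \mathfrak g = 1$, the standard computation gives $\Ext^1_{U(\mathfrak g)}(k,k) \cong (\mathfrak g/[\mathfrak g,\mathfrak g])^* = \mathfrak g^* \neq 0$. For a group algebra $kG$ with $G$ a nontrivial torsionfree abelian group of rank one, one has $\Ext^1_{kG}(k,k) \cong \Hom_{\ZZ}(G,k) = \Hom_{\mathbb Z}(G, k)$, which is nonzero since $G \neq 0$ and $k$ is a field of characteristic zero (any nonzero group homomorphism $G \to (k,+)$ exists because $G$ embeds in $\QQ$ and $\QQ$ embeds in $k$). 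So in all three cases $\Ext^1_H(k,k) \neq 0$.

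\medskip

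The main obstacle I anticipate is the step showing $H = kG$ when $H$ is not connected — ruling out proper extensions of the group algebra inside a commutative Hopf domain of GK-dimension one. The delicate point is that one must handle possible skew primitive elements of various weights and show none can be adjoined without pushing the GK-dimension up to $2$ or destroying the domain property; this likely requires the structure theory of the coradical filtration of pointed commutative Hopf algebras together with a careful transcendence-degree bookkeeping. The $\Ext^1$ computations and the connected case are comparatively routine.
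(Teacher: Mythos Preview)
The paper does not actually prove the classification: it simply cites \cite[Proposition~2.1]{GZ} for parts (1)--(3) and says the consequence $(\natural)$ ``follows by an easy computation.'' Your verification of $(\natural)$ via $\Ext^1_{U(\mathfrak g)}(k,k)\cong\mathfrak g^*$ and $\Ext^1_{kG}(k,k)\cong\Hom_{\ZZ}(G,k)$ is exactly that easy computation, so that part is fine and matches the paper.

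For the classification itself you are reconstructing a proof the paper does not give, and there is one concrete error in the connected case: the Cartier--Kostant--Milnor--Moore theorem is a statement about \emph{cocommutative} Hopf algebras, not commutative ones. Commutativity of $H$ does not by itself make $H$ cocommutative, so you cannot invoke CKMM directly to conclude $H\cong U(\mathfrak g)$. A cleaner route, which also dissolves your self-identified obstacle in the non-connected case, is the following. Since $H$ is commutative, $S^2=\id$, so by the paper's Lemma~\ref{xxlem3.8}(6) $H$ is locally affine; hence $H$ is a directed union of affine commutative Hopf domains of GK-dimension at most one. Over an algebraically closed field of characteristic zero, each such affine piece is the coordinate ring of a connected one-dimensional affine algebraic group, namely $k[t]$ (with $t$ primitive) or $k[t^{\pm1}]$ (with $t$ grouplike). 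These two types cannot both occur in a directed system inside $H$: $k[t^{\pm1}]$ has no nonzero primitive elements and $k[t]$ has no nontrivial grouplikes, so no common Hopf overalgebra of GK-dimension one can contain both. If all pieces are polynomial you get $H=k[t]=U(\mathfrak g)$ with $\dim\mathfrak g=1$; if all pieces are Laurent you get $H=kG$ with $G$ a directed union of infinite cyclic groups, i.e., a nonzero subgroup of $\QQ$. This replaces both your CKMM step and the transcendence-degree bookkeeping you were worried about.
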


\begin{proof}
The main assertion is \cite[Proposition 2.1]{GZ} and 
the consequence follows by an easy computation.
\end{proof}

Note that every Hopf algebra $H$ in Lemma \ref{xxlem1.1} is countable
dimensional and is completely determined by its coradical.

In \cite[Constructions 1.2, 1.3 and 1.4]{GZ}, we constructed some affine Hopf 
domains of GK-dimension two, labeled as types A, B and C. Non-affine
versions of types A and C can be constructed similarly and appeared also in 
other papers. One way of defining these is to use the Hopf Ore extensions 
introduced in \cite{Pa, BO}. We review the definition briefly, and refer
to \cite{Pa, BO} for more details.

Given a Hopf algebra $K$, an automorphism $\sigma$ and a $\sigma$-derivation
$\delta$ of $K$, a \emph{Hopf Ore extension} (or \emph{HOE}, for short)
of $K$, denoted by $K[z; \sigma,\delta]$, is a Hopf algebra $H$
that is isomorphic to the usual Ore extension $K[z; \sigma,\delta]$ 
as an algebra and 
contains $K$ as a Hopf subalgebra. HOEs have been studied in 
several papers including \cite{Pa, BO, WZZ3}. When $\delta=0$, the HOE 
$H$ is abbreviated to $K[z;\sigma]$; and when $\sigma=Id_K$, it is 
abbreviated to $K[z;\delta]$. If $K$ is a domain, then $H$ is 
also a domain. 

\begin{example}\cite[Example 5.4]{WZZ3}
\label{xxex1.2}
Let $K = kG$ where $G$ is a group and let $\chi: G \to k^{\times}$ be a 
character of $G$. Define an algebra automorphism $\sigma_{\chi}: K \to K$ 
by 
$$\sigma_{\chi}(g) = \chi(g)g, \quad \forall\; g\in G.$$
Let $\delta= 0$. By \cite[Example 5.4]{WZZ3}, $H := K[z;\sigma_{\chi}]$ is 
a HOE of $K$ with $\Delta(z)=z\otimes 1+e\otimes z$ for any choice of $e$ in 
the center of $G$. This Hopf algebra is denoted by $A_G(e,\chi)$.

We are mostly interested in nontrivial subgroups $G\subseteq \QQ$. In this 
case, 
$$\GKdim kG=1, \quad {\text{and}}\quad \GKdim H=2.$$ 
If $k={\mathbb C}$, then there are 
many characters of $G$. For example, let $\lambda$ be a real number; then 
$\exp_{\lambda}: r\to \exp(2\pi i r \lambda)$ is a  character from 
$\QQ$ to ${\mathbb C}$. 

A special case is when $G={\mathbb Z}$ (identified
with $\{x^{i}\}_{i\in {\mathbb Z}}$). Suppose the character $\chi: G\to 
k^\times$ is trivial (in this case $\sigma= \Id_{kG}$) and 
$\Delta(z)=z\otimes 1+x\otimes z$. This special HOE, denoted by 
$A_{{\mathbb Z}}(1,0)$, is the commutative Hopf algebra 
$A(1,1)$ given in \cite[Construction 1.2]{GZ} (by taking $(n,q)=(1,1)$).
More generally, if $n \in \ZZ$, $q\in \kx$, and $\chi: \ZZ \rightarrow \kx$ 
is the character given by $\chi(i) = q^{-i}$, then 
$A_\ZZ(n,\chi)$ is the Hopf algebra $A(n,q)$ of \cite[Construction 1.2]{GZ}.
\end{example}

\begin{example} \cite[A special case of Example 5.5]{WZZ3}
\label{xxex1.3} 
Let $K = kG$ where $G$ is a group and let $e$ be an element in the center
of $G$. Let $\tau: G \to (k,+)$ be an additive character of $G$.
Define a $k$-linear derivation $\delta : K \to K$ by 
$$\delta(g) = \tau(g)g(e-1),\quad \forall\;  g\in G.$$ 
Then $H := K[z;\delta]$ is a HOE of $K$ with $\Delta(z)=z\otimes 1+e\otimes z$.
This Hopf algebra is denoted by $C_G(e, \tau)$.

Later we will take $G$ to be a subgroup of $\QQ$. 
Since we assume $k$ has characteristic zero, 
there are many additive characters from $G\to (k,+)$. For example, 
let $\lambda$ be a rational number; then $i_{\lambda}: r\to r \lambda$ is 
an additive character from $G\to (k,+)$.

A special case is when $G = \ZZ$ (identified with $\{x^i\}_{i\in \ZZ}$), 
$e = 1-n$ for some $n \in \NN$, and $\tau = i_1$ (the inclusion map 
$\ZZ \rightarrow k$). Then $C_\ZZ(1-n, i_1)$ is the Hopf algebra $C(n)$ 
of  \cite[Construction 1.4]{GZ}.
\end{example}

The next result of \cite{WZZ3} says that $A_G(e,\chi)$ and 
$C_G(e, \tau)$ are natural classes of Hopf algebras. Let $G(H)$
denote the group of all grouplike elements in a Hopf algebra
$H$.

\begin{theorem}\cite[Theorem 7.1]{WZZ3}
\label{xxthm1.4}
Let $H$ be a pointed Hopf domain. Suppose that $G:=G(H)$ is abelian
and that
$$\GKdim kG < \GKdim H < \GKdim kG + 2 <\infty.$$
If $H$ does not contain $A(1, 1)$ as a Hopf subalgebra, 
then $H$ is isomorphic to either $A_G(e, \chi)$ 
or $C_G(e, \tau)$ as given in Examples {\rm{\ref{xxex1.2}-\ref{xxex1.3}}}. 
\end{theorem}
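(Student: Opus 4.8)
The plan is to reconstruct the structure theorem for $H$ by systematically exploiting the GK-dimension constraints to force a skew primitive generator over the group algebra $kG$, and then to identify the precise commutation and comultiplication data, finally ruling out the $A(1,1)$ case by hypothesis. First I would use the fact that $H$ is pointed with $G = G(H)$ abelian to see that the coradical of $H$ is $kG$, and that $kG$ is a Hopf subalgebra; since $kG$ is a domain of finite GK-dimension equal to $\GKdim kG$, it is commutative, and by the structure of torsionfree abelian groups of bounded rank it is (essentially) a subgroup of some $\QQ^r$. The strict inequality $\GKdim kG < \GKdim H$ together with $\GKdim H < \GKdim kG + 2$ is the crucial numerical input: by a filtration/associated-graded argument on the coradical filtration, one shows $H$ is obtained from $kG$ by adjoining a single ``new'' generator $z$ lying in $H_1$, the first term of the coradical filtration, so that $H = kG\langle z\rangle$ and $z$ can be normalized (as in the Notation subsection) to be a genuine $(1,g)$-skew primitive element with weight $g = \wgt(z)$ grouplike.

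Next I would analyze how $z$ interacts with $kG$. Conjugation/commutation of grouplikes with the skew primitive $z$ is governed by the coalgebra structure: for $h \in G$, the element $h z h^{-1}$ is again $(1, g)$-skew primitive with the same weight (since $G$ is abelian and $g$ is central), so $hzh^{-1} - z$ lies in the span of skew primitives summing to zero under $\Delta - (\cdot\otimes 1 + g\otimes\cdot)$, i.e. in $k(1-g)$; hence either $h$ quasi-commutes with $z$ via a character $\chi\colon G \to \kx$ (giving $hz = \chi(h) zh + (\text{scalar})(1-g)$) or one lands in an additive-cocycle situation giving a derivation-type relation $hz = zh + \tau(h)(g-1)h$ with $\tau\colon G \to (k,+)$ additive. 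The key dichotomy — multiplicative versus additive — is exactly what separates type A from type C. I would then check that $H$ is free as a left $kG$-module on powers of $z$ (using that $H$ is a domain of the expected GK-dimension and the coradical filtration is exhaustive), so that $H \cong kG[z;\sigma_\chi]$ or $kG[z;\delta]$ as an Ore extension, and the Hopf structure makes it precisely $A_G(e,\chi)$ or $C_G(e,\tau)$ with $e = g$.

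The main obstacle I anticipate is the step forcing $H$ to be generated over $kG$ by a \emph{single} skew primitive element of the stated form, rather than by several such elements or by elements of higher coradical-filtration degree: this is where the two-sided bound on $\GKdim H$ must be used delicately. If there were two ``independent'' skew primitive generators not already in $kG$, a Poincaré–Birkhoff–Witt type computation on the associated graded algebra would push $\GKdim H$ up to at least $\GKdim kG + 2$, contradicting the hypothesis; conversely one needs $\GKdim H > \GKdim kG$ to guarantee that $z \notin kG$ so that at least one such generator exists. Making the growth estimate rigorous requires care about whether the filtration by powers of $z$ is finite or infinite and controlling the relation between $\sigma$ and the group $G$ (e.g. $\chi$ must be compatible with $\sigma$ extending to an automorphism of $kG$, which is automatic for group characters but must be recorded). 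A secondary subtlety is the normalization of skew primitives and checking that the weight $e$ actually lies in $G$ and is central — this follows from $G$ being abelian, but one should verify that the HOE data $(\sigma,\delta)$ is exactly of the form in Examples \ref{xxex1.2} and \ref{xxex1.3} and not some twisted variant, which ultimately reduces to the classification of one-dimensional comodule-algebra extensions and the observation that the only obstruction left uncovered is precisely the presence of $A(1,1)$, excluded by hypothesis.
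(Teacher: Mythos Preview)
The paper does not actually prove Theorem~\ref{xxthm1.4}; it is quoted from \cite[Theorem 7.1]{WZZ3}. The only hint at the argument appears in the proof of Lemma~\ref{xxlem3.2}, which invokes \cite[Lemma 2.2(c)]{WZZ2} for the character/additive-character dichotomy, \cite[Theorem 5.3.1]{Mo} for injectivity of the map from the Ore extension into $H$, and then the primitive-cohomology machinery of \cite{WZZ3} (specifically $\mathrm{PCdim}\,K=1$ and \cite[Proposition 2.4(2)]{WZZ3}) to conclude $H=K$. Your broad outline---find a nontrivial skew primitive $z$, analyze $h z h^{-1}$ to extract either a multiplicative or an additive character, and identify $H$ as a HOE---matches that sketch and is the right shape.

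There is, however, a real gap in your proposed mechanism for forcing a \emph{single} skew primitive generator. Your PBW-style claim that two independent skew primitives would push $\GKdim H$ up to $\GKdim kG + 2$ is false as stated: the type~B algebras $B(n,p_0,\dots,p_s,q)$ have $s\ge 2$ genuinely distinct nontrivial skew primitives $y_1,\dots,y_s$ (with distinct weights), yet $\GKdim = \GKdim kG + 1$ because of the relations $y_i^{p_i}=y_j^{p_j}$. So the numerical bound alone cannot rule out multiple skew primitives. This is precisely where the hypothesis ``$H$ does not contain $A(1,1)$'' enters, and it does so in a sharper way than you indicate: in type~B one always finds a commuting pair $(x,y_i^{p_i})$ generating a copy of $A(1,1)$, and the actual argument in \cite{WZZ3} uses this to reduce to the situation of Lemma~\ref{xxlem3.2}, where the skew primitive is unique up to scalar modulo $kG$. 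A second, related gap is your assertion that the coradical filtration forces $H$ to be generated by $C_1(H)$; this is not automatic for pointed Hopf algebras and is exactly what \cite[Proposition 2.4(2)]{WZZ3} supplies via the PCdim invariant. Finally, a small slip: $hzh^{-1}-z$ is $(1,g)$-skew primitive, hence lies in $kz + k(1-g)$ (once uniqueness of $z$ is known), not in $k(1-g)$ alone---the coefficient of $z$ is what gives the character $\chi$.
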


As a consequence,

\begin{corollary}
\label{xxcor1.5} 
Let $H$ be a pointed Hopf domain of GK-dimension two.
Suppose that the coradical of $H$ has GK-dimension one and 
that $H$ does not contain $A(1, 1)$ as a Hopf subalgebra.
Then $H$ is isomorphic to either $A_G(e, \chi)$ 
or $C_G(e, \tau)$ where $G$ is a nonzero subgroup of 
$\QQ$. 
\end{corollary}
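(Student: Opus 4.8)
The plan is to deduce Corollary \ref{xxcor1.5} from Theorem \ref{xxthm1.4} by checking that its hypotheses hold and by pinning down the shape of $G(H)$. First I would observe that a pointed Hopf domain $H$ of GK-dimension two whose coradical $H_0$ has GK-dimension one has coradical $H_0 = kG$ with $G := G(H)$, since $H$ pointed means $H_0$ is the group algebra of its grouplike elements. Now $kG$ is a Hopf domain of GK-dimension one, so by Lemma \ref{xxlem1.1} the group $G$ is either infinite cyclic or a non-cyclic subgroup of $\QQ$; in either case $G$ is a nonzero (torsionfree, rank one) subgroup of $\QQ$, hence in particular abelian. This already supplies the abelianness hypothesis of Theorem \ref{xxthm1.4} and identifies $G$ as a nonzero subgroup of $\QQ$.

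Next I would verify the GK-dimension inequalities $\GKdim kG < \GKdim H < \GKdim kG + 2 < \infty$. We have $\GKdim kG = 1$ by the previous paragraph and $\GKdim H = 2$ by hypothesis, so the chain reads $1 < 2 < 3 < \infty$, which is clear. The remaining hypothesis of Theorem \ref{xxthm1.4} — that $H$ does not contain $A(1,1)$ as a Hopf subalgebra — is assumed outright in the statement of the corollary, so nothing needs to be done there. Applying Theorem \ref{xxthm1.4} then yields that $H \cong A_G(e,\chi)$ or $H \cong C_G(e,\tau)$ as in Examples \ref{xxex1.2}--\ref{xxex1.3}, and since we have already established that $G$ is a nonzero subgroup of $\QQ$, this is exactly the asserted conclusion.

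The only point requiring a little care — and the one I would flag as the main (modest) obstacle — is the identification $G(H) = G$ where $kG = H_0$ is the coradical: one must make sure that ``coradical has GK-dimension one'' together with ``$H$ pointed'' forces the coradical to be the group algebra of $G(H)$ with $G(H)$ a rank-one torsionfree abelian group, rather than, say, containing torsion or having higher rank. Pointedness gives $H_0 = k[G(H)]$ directly; then $\GKdim k[G(H)] = 1$ forces $G(H)$ to have polynomial growth of degree one, so by Lemma \ref{xxlem1.1} (applied to the Hopf domain $k[G(H)]$, which is a domain since it sits inside the domain $H$) the group $G(H)$ is either $\ZZ$ or a non-cyclic subgroup of $\QQ$ — both nonzero subgroups of $\QQ$. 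Once this is in hand the rest is a direct invocation of Theorem \ref{xxthm1.4}, so the proof is essentially a short bookkeeping argument with no substantial new content.
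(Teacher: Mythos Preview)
Your proposal is correct and follows essentially the same approach as the paper: identify the coradical as $kG$ with $G=G(H)$ (pointedness), use Lemma~\ref{xxlem1.1} to conclude $G$ is a nonzero subgroup of $\QQ$ (hence abelian), verify the chain $1<2<3$, and invoke Theorem~\ref{xxthm1.4}. The only difference is that you spell out the ``modest obstacle'' more explicitly than the paper does, but the logical structure is identical.
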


\begin{proof} Since $H$ is pointed, the coradical of $H$
is $kG$ where $G=G(H)$. 
Since $\GKdim kG=1$, by Lemma \ref{xxlem1.1},
$G$ is a nonzero subgroup of $\QQ$. So $G(H)=G$ is
abelian. Now
$$1=\GKdim kG< 2=\GKdim H< \GKdim kG+2=3.$$
Hence, the hypothesis of Theorem \ref{xxthm1.4} holds, 
and the assertion follows from the theorem.
\end{proof}

Hopf algebras of GK-dimension two that contain $A(1,1)$
are more complicated. We will construct a family of 
them in the next section.

%%%%%%%%%%%%%%%%%%%%%%%%%%%%%%%%%%%%%%%%%
%%%%%%%%%%%%%%%%%%%%%%%
\section{Non-affine construction of type $B$}
\label{xxsec2}

%%%%%%%%%%%%%%%%%%%%%%%
\subsection{Construction}
\label{xxsec2.1}
We construct a Hopf algebra $B_G(\{p_i\},\chi)$ based 
on the following data.

{\bf Data.}
Let $G$ be a subgroup of $(\QQ,+)$ that contains $\ZZ$, 
and write 
its group algebra in the form
$$kG = \bigoplus_{a\in G} k\, x^a,$$
as in Notation \ref{xxsec0.1}. 

Let $I$ be an index set with $|I| \ge 2$. Let $\{ p_i \mid i\in I\}$ 
be a set of pairwise relatively prime integers such that 
$p_i\ge2$ and $1/p_i \in G$ for all $i\in I$, and let $M$ be 
the additive submonoid of $\QQ$ generated by $\{1/p_i \mid i\in I\}$. 
Due to the relative primeness assumption, $1/p_ip_j \in G$ for all 
distinct $i,j\in I$. Obviously $M\subseteq G$, but we want to keep 
the algebras of $M$ and $G$ separate, as these are playing 
different roles. Following Notation \ref{xxsec0.1}
we write the monoid algebra of $M$ in the form
$$kM = \bigoplus_{b\in M} k\, y^b.$$
Set 
$$GM = \{ a_1m_1+ \cdots+ a_tm_t \mid t\in\Znonneg, \; a_l\in G, 
\; m_l\in M\} = \sum_{i\in I} G(1/p_i),$$
and let $\chi: GM \rightarrow \kx$ be a character (i.e., a group 
homomorphism) such that 
\begin{enumerate}
\item[(1)]
$\chi(1/p_i^2)$ is a primitive $p_i$-th root of unity for all $i\in I$. 
\end{enumerate}
Note that (1) implies that 
\begin{enumerate}
\item[(2)]
$\chi(1/p_i) = \chi(1/p_i^2)^{p_i} = 1$ for $i \in I$, and
\item[(3)]
$\chi(1)=\chi(1/p_i)^{p_i}=1$.
\end{enumerate}

{\bf Observation.}
For any distinct $i,j\in I$, we have $1/p_ip_j \in GM$ and 
there are $c_i,c_j\in \ZZ$ such that $c_ip_i + c_jp_j =1$, whence
\begin{equation}  
\label{E2.0.1} \tag{2.0.1}
\chi(1/p_ip_j) = \chi((c_i/p_j)+(c_j/p_i)) 
= \chi(1/p_j)^{c_i} \chi(1/p_i)^{c_j} = 1.
\end{equation}

{\bf Algebra structure.}
Let $G$ act on $kM$ by $k$-algebra automorphisms such that
$$a \cdot y^b = \chi(ab) y^b \qquad \forall \; a\in G, \;\; b\in M.$$
Use this action to turn $kM$ into a left $kG$-module algebra, 
form the smash product 
$$B := kM \# kG,$$
and omit $\#$s from expressions in $B$. If we write 
$y_i := y^{1/p_i}$ for $i\in I$, then we can present $B$ by 
the generators $\{x^a \mid a\in G\} \sqcup \{y_i \mid i\in I\}$ 
and the relations
\begin{equation}  \label{E2.0.2} \tag{2.0.2}
\begin{aligned}
x^0 &= 1  \\
x^a x^{a'} &= x^{a+a'}  &&(a,a' \in G)  \\
x^a y_i &= \chi(a/p_i) y_i x^a  &\qquad&(a\in G, \;\; i \in I)  \\
y_i y_j &= y_j y_i  &&(i,j \in I)  \\
y_i^{p_i} &= y_j^{p_j}  &&(i,j \in I).
\end{aligned}
\end{equation}
These relations are very similar to the relations in 
\cite[(E1.2.1) in Construction 1.2]{GZ}. 

Consider a nonempty finite subset $J \subseteq I$. If $c$ is the 
product of the $p_j$ for $j \in J$, then the submonoid 
of $M$ generated by $\{ 1/p_j \mid j\in J\}$ is a submonoid of 
$M_c := \Znonneg (1/c)$. The subalgebra of $B$ generated by 
$\{x^a \mid a\in G\} \sqcup \{y_j \mid j\in J \}$ is a subalgebra 
of a skew polynomial ring
$$kM_c \# kG = kG[y^{1/c}; \sigma_c],$$
where $\sigma_c$ is the automorphism of $kG$ such that 
$\sigma_c(x^a) = \chi(-a/c) x^a$ for all $a\in G$. It follows 
that $kM_c \# kG$ is a domain of GK-dimension $2$. 

Since $B$ is a directed union of subalgebras of the form $kM_c \# kG$, we 
conclude that $B$ is a domain of GK-dimension $2$.

{\bf Hopf structure.}
It is clear from the presentation in \eqref{E2.0.2} that 
there is an algebra homomorphism $\eps: B\rightarrow k$ such 
that  $\eps(x^a)=1$ for all $a\in G$ and $\eps(y_i) =0$ for 
all $i\in I$.

Obviously $(x^a \otimes x^a)(x^{a'}\otimes x^{a'}) = x^{a+a'} 
\otimes x^{a+a'}$ for all $a,a' \in G$. Set $x_i := x^{1/p_i}$ 
and $\delta_i := y_i\otimes 1 + x_i \otimes y_i$ for $i\in I$. 
It is clear that $(x^a \otimes x^a) \delta_i = \chi(a/p_i) \delta_i 
(x^a \otimes x^a)$ for all $a\in G$ and $i\in I$. For any distinct 
$i,j\in I$, we have
\begin{equation}  \label{E2.0.3} \tag{2.0.3}
x_i y_j = \chi(1/p_ip_j) y_j x_i = y_j x_i
\end{equation}
because of \eqref{E2.0.1}, and likewise $x_j y_i = y_i x_j$. 
It follows that $\delta_i \delta_j = \delta_j \delta_i$. 
Moreover, since 
$$(x_i \otimes y_i) (y_i\otimes 1) = \chi(1/p_i^2) 
(y_i\otimes 1)(x_i \otimes y_i)$$
with $\chi(1/p_i^2)$ a primitive $p_i$-th root of unity, it 
follows from the $q$-binomial formula that 
$\delta_i^{p_i} = y^1\otimes 1 + x^1\otimes y^1$. Likewise, 
$\delta_j^{p_j} = y^1\otimes 1 + x^1\otimes y^1$, so that 
$\delta_i^{p_i} = \delta_j^{p_j}$. Therefore there is an 
algebra homomorphism $\Delta: B\rightarrow B\otimes B$ such 
that $\Delta(x^a) = x^a\otimes x^a$ for all $a\in G$ and 
$\Delta(y_i) = \delta_i$ for all $i\in I$.

Observe that $(\eps\otimes\id)\Delta(x^a) = x^a$ for all 
$a\in G$ and $(\eps\otimes\id)\Delta(y_i) = y_i$ for all 
$i\in I$. Consequently, $(\eps\otimes\id)\Delta = \id$, and 
similarly $(\id\otimes\,\eps)\Delta = \id$. We also observe 
that $(\Delta\otimes\id)\Delta$ and $(\id\otimes\, \Delta)\Delta$ 
agree on $x^a$ and $y_i$ for all $a\in G$ and $i\in I$, and 
consequently $(\Delta\otimes\id)\Delta = (\id\otimes\, \Delta)\Delta$. 
Therefore $(B,\Delta,\eps)$ is a bialgebra.

Next, observe that $(x^{-a'})(x^{-a}) = x^{-(a+a')}$ for all 
$a,a'\in G$ and that 
$$(-x_i^{-1} y_i) x^{-a} = \chi(a/p_i) x^{-a} (-x_i^{-1} y_i)$$
for all $a\in G$ and $i\in I$. For any distinct $i,j\in I$, we 
see using \eqref{E2.0.3} that
$$(-x_j^{-1} y_j)(-x_i^{-1} y_i) = (-x_i^{-1} y_i)(-x_j^{-1} y_j).$$
We also have
$$(-x_i^{-1} y_i)^{p_i} = (-1)^{p_i} x_i^{-1} y_i x_i^{-1} y_i 
\cdots x_i^{-1} y_i = (-1)^{p_i} \chi(-1/p_i^2)^{p_i(p_i+1)/2} 
y^1 x^{-1}.$$
If $p_i$ is odd, then $p_i$ divides $p_i(p_i+1)/2$ and so 
$\chi(-1/p_i^2)^{p_i(p_i+1)/2} = 1$. On the other hand, if 
$p_i$ is even, then $\chi(-1/p_i^2)^{p_i(p_i+1)/2} = (-1)^{p_i+1} = -1$ 
due to the primitivity of $\chi(-1/p_i^2)$. In both cases, we 
end up with $(-x_i^{-1} y_i)^{p_i} = -y^1x^{-1}$. Likewise, 
$(-x_j^{-1} y_j)^{p_j} = -y^1x^{-1}$, so that 
$(-x_i^{-1} y_i)^{p_i} = (-x_j^{-1} y_j)^{p_j}$. Therefore 
there is an algebra homomorphism $S: B\rightarrow B^{\op}$ 
such that $S(x^a) = x^{-a}$ for all $a\in G$ and $S(y_i) = 
-x_i^{-1} y_i$ for all $i\in I$.

Finally, observe that $m(S\otimes\id)\Delta(x^a) = 1 = \eps(x^a)1$ 
for all $a\in G$, where $m: B\otimes B\rightarrow B$ is the 
multiplication map, and $m(S\otimes\id)\Delta(y_i) = 0 = \eps(y_i)1$ 
for all $i\in I$, from which we conclude that $m(S\otimes\id)\Delta 
= u\circ\eps$, where $u: k\rightarrow B$ is the unit map. Similarly, 
$m(\id\otimes\, S)\Delta = u\circ\eps$. Therefore $(B,\Delta,\eps,S)$ 
is a Hopf algebra. We shall denote it $B_G(\{p_i\},\chi)$. The Hopf 
algebra structure is uniquely determined by the conditions
\begin{equation}  \label{E2.0.4} \tag{2.0.4}
\begin{aligned}
x^a\;\; &\text{is grouplike for all} \;\; a\in G,  \\
y_i \;\; &\text{is} \;\; (1,x_i)\text{-skew primitive for all} \;\; i\in I.
\end{aligned}
\end{equation}

The construction above can also be carried out when the index set $I$ is a singleton, but then the resulting Hopf algebra is isomorphic to $A_G(e,\chi)$ for suitable $e$ and $\chi$. We leave the details to the reader.

%%%%%%%%%%%%%%%%%%%%%%%
\subsection{Examples}
\label{xxsec2.2}
The data $(G,\{p_i\},\chi)$ can be chosen so that $B_G(\{p_i\},\chi)$ 
contains infinitely many skew primitive elements, as follows. 

\begin{example}
\label{xxex2.1}
Let $p_1,p_2,\dots$ be any infinite sequence of pairwise relatively 
prime integers $\ge2$, and let $G$ be the subgroup of $(\QQ,+)$  
generated by $1/p_i$ for all $i \in \NN$. Set 
$M := \sum_{i=1}^\infty \Znonneg (1/p_i)$ and $G^2 := 
\sum_{i=1}^\infty \ZZ(1/p_i^2)$. Note that $GM=G^2$, which is a 
subgroup of $(\QQ,+)$.
For each $i\in\NN$, let $\beta_i \in \kx$ be a primitive $p_i$-th root of 
unity. The cosets $\overline{1/p_i^2}$ in $\QQ/\ZZ$ generate finite 
cyclic subgroups of pairwise relatively prime orders, whence the 
sum of these subgroups is a direct sum. Hence, there is a homomorphism 
$$\overline{\chi}_0: \sum_{i=1}^\infty \ZZ 
\bigl(\, \overline{1/p_i^2} \,\bigr)=\bigoplus_{i=1}^\infty \ZZ 
\bigl(\, \overline{1/p_i^2} \,\bigr) \rightarrow \kx$$ 
such that $\overline{\chi}_0\bigl(\, \overline{1/p_i^2}\, \bigr) = \beta_i$ 
for all $i$. Since $\kx$ is a divisible abelian group, it is injective 
in the category of all abelian groups, and so $\overline{\chi}_0$ extends 
to a homomorphism 
$\overline{\chi}: G^2/\ZZ \rightarrow \kx$. Compose $\overline{\chi}$ 
with the quotient map $G^2\rightarrow G^2/\ZZ$ to obtain a character 
$\chi: G^2 \rightarrow \kx$. By the choice of $\chi$, we have 
$\chi(1/p_i^2) = \beta_i$ for all $i\in \NN$. As a consequence,
$\chi(a)=1$ for all $a\in G$. 

Thus, by the construction in the previous subsection, we obtain a
Hopf algebra $B_G(\{p_i\},\chi)$ that contains (infinitely many) 
distinct skew primitive elements $y_i$, for $i\in \NN$.

Since there are uncountably many different choices of 
$\overline{\chi}_0$, there are uncountably many 
non-isomorphic Hopf domains of GK-dimension two by 
Proposition \ref{xxpro2.5} below.
\end{example}

%\end{rmconstruction}

Certain natural finitely generated subalgebras of $B_G(\{p_i\},\chi)$ 
are Hopf algebras isomorphic to some of the Hopf algebras $A(n,q)$ 
and $B(n,p_0,\dots,p_s,q)$ of \cite[Constructions 1.1, 1.2]{GZ}, 
as follows.

\begin{lemma}  \label{xxlem2.2}
Let $B:= B_G(\{p_i\},\chi)$ as in the previous subsection.
Let $\Gtil$ be a finitely generated subgroup of $G$, $\Itil$ 
a nonempty finite subset of $I$ such that $1/p_i \in \Gtil$ 
for all $i\in \Itil$, and $\Btil$ the subalgebra of $B$ generated 
by $\{x^a \mid a\in \Gtil\} \sqcup \{y_i \mid i\in \Itil\}$. Then 
$\Btil$ is a Hopf subalgebra of $B$.

Assume that $\Itil = \{1,\dots,s\}$ for some positive integer 
$s$ and $p_1< \cdots< p_s$. Set $m:= p_1p_2 \cdots p_s$ and 
$m_i := m/p_i$ for $i\in \Itil$.
\begin{enumerate}
\item[(1)]
There are positive integers $n$ and $p_0$ such that 
$\Gtil = \ZZ(1/mn)$ and $1/m^2n \in GM$, while 
$q := \chi(1/m^2n)$ is a primitive $\ell$-th root of unity, 
where $\ell := mn/p_0$. Moreover, $p_0 \mid n$ and $p_0$ is 
relatively prime to each of $p_1,\dots,p_s$.
\item[(2)]
If $s=1$, then $\Btil \cong A(n,q)$.
\item[(3)]
If $s\ge 2$, then $\Btil \cong B(n,p_0,\dots,p_s,q)$.
\end{enumerate}
\end{lemma}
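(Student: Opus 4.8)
The plan is to prove the three assertions in order, using the explicit presentation \eqref{E2.0.2} of $B$ together with the Hopf-structure conditions \eqref{E2.0.4}, and matching these against the presentations of $A(n,q)$ and $B(n,p_0,\dots,p_s,q)$ in \cite[Constructions 1.1, 1.2]{GZ}.

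First, for the opening claim that $\Btil$ is a Hopf subalgebra: since $\Gtil$ is a subgroup of $G$ and $\Itil$ is finite, the generators $x^a$ ($a\in\Gtil$) are grouplike and the $y_i$ ($i\in\Itil$) are $(1,x_i)$-skew primitive with $x_i = x^{1/p_i}$ and $1/p_i\in\Gtil$; hence $\Delta$ maps the generating set into $\Btil\otimes\Btil$, $\eps$ restricts to $\Btil$, and $S(x^a)=x^{-a}\in\Btil$, $S(y_i)=-x_i^{-1}y_i\in\Btil$. So $\Btil$ is closed under the structure maps, and one only needs to observe that $\Btil$ really is a subbialgebra closed under the antipode, which is immediate. (One should also note $\Btil$ is a domain of GK-dimension $2$, being a subalgebra of some $kM_c\#kG$ as in the Construction, though this is not strictly needed for the statement.)

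For part (1), the key point is that $\Gtil$, being a finitely generated subgroup of $\QQ$ containing each $1/p_i$ for $i\in\Itil=\{1,\dots,s\}$, is cyclic, and its generator can be written as $1/mn$ for a unique positive integer $n$ (after clearing the factor $m=p_1\cdots p_s$, which divides the denominator since the $p_i$ are pairwise coprime and each $1/p_i\in\Gtil$). The element $1/m^2n$ lies in $GM$: indeed $1/m^2n = (1/mn)\cdot\sum_i (m_i/m)\cdot$ — more precisely, choosing $c_i\in\ZZ$ with $\sum_i c_i m_i = 1$ (possible as $\gcd(m_1,\dots,m_s)=1$), we get $1/m^2n = \sum_i c_i (1/mn)(1/p_i) \in G\cdot M = GM$. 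Then $q:=\chi(1/m^2n)$ is a root of unity whose order $\ell$ I would compute from property (1) of $\chi$: writing $1/m^2n$ in terms of the $1/p_i^2$ and $1/(p_ip_j)$ using the observation \eqref{E2.0.1} (all the cross terms $\chi(1/p_ip_j)=1$), one finds $q^{p_i}$ is a primitive $(p_i$ or smaller$)$-th root of unity and a CRT argument identifies the order of $q$ as $\ell = mn/p_0$ for a suitable $p_0\mid n$ coprime to all $p_i$. I expect this order computation to be the main obstacle: one must carefully track how the extra factor $n$ and the non-squarefree part of the denominator interact with property (1) of $\chi$, and verify that $p_0$ (defined as $mn/\ell$, or equivalently via the order of $q$) divides $n$ and is coprime to the $p_i$, which forces the correct normalization to match \cite{GZ}.

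For parts (2) and (3), I would exhibit an explicit isomorphism by comparing presentations. Set $x := x^{1/mn}$, so $\Gtil = \langle x\rangle \cong\ZZ$, and $y_i = y^{1/p_i}$ as before; then the relations \eqref{E2.0.2} restricted to $\Btil$ read $x^{mn}$ central-ish via the commutation $x\, y_i = \chi(1/mnp_i) y_i x$, $y_iy_j=y_jy_i$, $y_i^{p_i}=y_j^{p_j}$, and $\Delta(x)=x\otimes x$, $\Delta(y_i)=y_i\otimes 1 + x^{m_i n}\otimes y_i$ (since $x_i = x^{1/p_i} = (x^{1/mn})^{m_i n}$ — wait, $1/p_i = (mn/p_i)\cdot(1/mn) = m_i n\cdot(1/mn)$, so $x_i = x^{m_in}$). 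Comparing with \cite[Construction 1.2]{GZ}, where $A(n,q)$ (the $s=1$ case) is generated by a grouplike $x$ and a skew primitive $y$ with $y^{p_0}$ related to $x^{\pm}$ appropriately and $xy = q^{\pm} yx$, and similarly $B(n,p_0,\dots,p_s,q)$ has generators $x$ and $y_1,\dots,y_s$ with $y_i^{p_i} = y_1^{p_1}$ and $y_i$ skew primitive of weight $x^{m_i n}$, one checks the relations match under $q=\chi(1/m^2n)$ and the identification above. The isomorphism is then defined on generators and shown to be well-defined (relations are respected) and bijective (both sides have the same PBW-type $k$-basis, indexed by $\ZZ\times\Znonneg^{?}$ with the single relation $y_i^{p_i}=y_j^{p_j}$, so a dimension/basis count gives injectivity and surjectivity is clear). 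The routine verification that it is a Hopf algebra map follows since it respects $\Delta$ and $\eps$ on generators.
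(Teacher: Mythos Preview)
Your overall strategy matches the paper's: establish $\Btil$ is a Hopf subalgebra from \eqref{E2.0.4}, pin down the cyclic generator $1/mn$ of $\Gtil$, define $q=\chi(1/m^2n)$, and then match presentations with \cite[Constructions 1.1, 1.2]{GZ}. Parts (2) and (3) are essentially right, modulo one slip: $A(n,q)$ has no relation of the form ``$y^{p_0}$ related to $x^{\pm}$''; it is simply $k\langle x^{\pm1},y\rangle$ with $xy=qyx$ and $y$ $(1,x^n)$-skew primitive. The key computation you need is $\xtil y_i = \chi(1/mnp_i)y_i\xtil = \chi(m_i/m^2n)y_i\xtil = q^{m_i}y_i\xtil$, which is exactly the relation in \cite[(E1.2.1)]{GZ}.

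The genuine gap is in part (1), and your instinct that the order computation is the obstacle is correct, but your proposed route---expressing $1/m^2n$ as a combination of $1/p_i^2$ and $1/p_ip_j$ and invoking CRT---is more awkward than necessary and you have not carried it out. The paper's argument is cleaner: rather than decomposing $1/m^2n$, raise $q$ to well-chosen powers. Specifically, $q^{m_i^2n}=\chi(m_i^2n/m^2n)=\chi(1/p_i^2)$ is a primitive $p_i$-th root of unity, which forces $p_i\mid\ell$ for every $i$, hence $m\mid\ell$; and $q^{p_im_i^2n}=1$ forces $\ell\mid m_imn$ for every $i$, hence $\ell\mid mn$. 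This gives $p_0:=mn/\ell$ with $p_0\mid n$. The coprimality of $p_0$ with each $p_i$ is not a CRT formality: the paper first shows $\chi(1/m_i)=1$ (from $\chi(1/p_j)=1$ and the identity $\sum_j c_j/p_j=1/m$), then sets $d_i=\gcd(p_0,p_i)$, writes $p_0=d_iu_i$, $p_i=d_iv_i$, and uses $\ell\mid v_im_in$ together with a B\'ezout identity $a_ip_i^2+b_im_i=1$ to deduce $\chi(1/p_i^2)^{b_iv_i}=1$, whence $p_i\mid v_i$ and $d_i=1$. You should supply this argument (or an equivalent one) explicitly; as written, your proposal only acknowledges that something must be checked here.
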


\begin{proof} (1)
Since $\Gtil$ is a finitely generated subgroup of $\QQ$ 
containing $\ZZ$, it has the form $\Gtil = \ZZ(1/t)$ for 
some positive integer $t$. For $i\in \Itil$, we have 
$1/p_i \in \Gtil$, whence $p_i \mid t$. Then, since the 
$p_i$ are pairwise relatively prime, $m\mid t$. Thus, 
$t = mn$ for some positive integer $n$. Let $\Mtil$ be
the submonoid $\sum_{i\in \Itil} \Znonneg (1/p_i)$. 

The pairwise relative primeness of the $p_i$ implies 
that $\gcd(m_1,\dots,m_s) = 1$, and so there exist 
integers $c_i$ such that $c_1m_1+ \cdots+ c_sm_s =1$, 
whence
\begin{equation}  \label{E2.2.1} \tag{2.2.1}
(c_1/p_1)+ \cdots+ (c_s/p_s) = 1/m .
\end{equation}
This does not imply that $1/m \in \Mtil$, since some of 
the $c_i$ may be negative, but we do get
$$\frac1{m^2n} = \frac{c_1}{p_1mn} + \cdots+ \frac{c_s}{p_smn} 
= \biggl( \frac{c_1}t \biggr) \left( \frac1{p_1} \right) 
+\cdots + \biggl( \frac{c_s}t \biggr) \biggl( \frac1{p_s} \biggr) 
\in \Gtil \Mtil\subseteq GM.$$
Therefore $q := \chi(1/m^2n) \in \kx$ is defined. Since 
$q^{m^2n} = \chi(1) = 1$, the order of $q$ in the group $\kx$ is 
finite, say $|q| = \ell$. Thus, $q$ is a primitive $\ell$-th root 
of unity. For $i\in \Itil$, the power
$$q^{m_i^2n} = \chi(1/m^2n)^{m_i^2n} = \chi(1/p_i^2)$$
is a primitive $p_i$-th root of unity, which implies that $p_i \mid \ell$. 
Consequently, $m\mid \ell$. On the other hand, $q^{p_im_i^2n} = 1$, 
whence $\ell$ divides $p_im_i^2n = m_imn$ for all $i\in \Itil$, and 
so $\ell \mid mn$. Thus, $mn = \ell p_0$ for some positive integer 
$p_0$. Since $m\mid \ell$, it follows that $p_0\mid n$.

Since $\chi(1/p_i) = 1$ for all $i\in \Itil$, we can invoke 
\eqref{E2.2.1} to obtain $\chi(1/m) = 1$, from which it follows 
that $\chi(1/m_i) =1$ for any $i\in \Itil$. Set $d_i := \gcd(p_0,p_i)$, 
and write $p_0= d_iu_i$ and $p_i = d_iv_i$ for some positive integers 
$u_i$, $v_i$. Since $\ell d_iu_i= \ell p_0= mn = d_iv_im_in$, we find 
that $\ell \mid v_im_in$, and consequently
$$1= q^{v_im_in} = \chi(1/m^2n)^{v_im_in} = \chi(1/p_i^2m_i)^{v_i}.$$
Now $p_i^2$ and $m_i$ are relatively prime, whence $a_ip_i^2+ b_im_i = 1$ 
for some integers $a_i$, $b_i$, and so $1/p_i^2m_i = (a_i/m_i)+ (b_i/p_i^2)$. 
Thus,
$$1 = \chi(p_i^2m_i)^{v_i} = \chi(a_i/m_i)^{v_i}  \chi(b_i/p_i^2)^{v_i} 
= \chi(1/p_i^2)^{b_iv_i}.$$
Since $\chi(1/p_i^2)$ is a primitive $p_i$-th root of unity, 
$p_i \mid b_iv_i$, from which it follows that $p_i$ divides 
$a_ip_i^2v_i+ b_im_iv_i = v_i$, and so $d_i=1$. Thus, $p_0$ and 
$p_i$ are relatively prime, for each $i\in \Itil$. By now, we have checked
all assertions in part (1).

(2) In this case, $m=p_1$. Set $\xtil := x^{1/mn}$, so that $\Btil$ 
is generated by $\{ \xtil^{\pm1}, y_1\}$. We have 
$\xtil y_1 = \chi(1/mnp_1) y_1\xtil = qy_1\xtil$, so there is an 
algebra isomorphism $\phi: \Btil \rightarrow A(n,q)$ with 
$\phi(\xtil) = x$ and $\phi(y_1) = y$. Since $x_1= x^{1/p_1} = \xtil^n$ 
and $\Delta(y_1) = y_1\otimes 1+ x_1\otimes y_1$, we see that $\phi$ 
preserves comultiplication. Observe also that $\phi$ preserves counit 
and antipode. Therefore $\phi$ is an isomorphism of Hopf algebras.

(3) Again, set $\xtil := x^{1/mn}$, and observe that $\Btil$ can be 
presented by the generators $\xtil^{\pm1},y_1,\dots,y_s$ and the relations
\begin{equation}  \label{E2.2.2} \tag{2.2.2}
\begin{aligned}
\xtil \xtil^{-1} &= \xtil^{-1} \xtil = 1   \\
\xtil y_i &= q^{m_i} y_i \xtil  &\qquad&(1\le i\le s)  \\
y_iy_j &= y_jy_i  &\qquad&(1\le i<j\le s)  \\
y_i^{p_i} &= y_j^{p_j}  &\qquad&(1\le i<j\le s).
\end{aligned}
\end{equation}
Comparing \eqref{E2.2.2} with \cite[(E1.2.1)]{GZ}, we see that there 
is an algebra isomorphism $\phi: \Btil \rightarrow B(n,p_0,\dots,p_s,q)$ 
such that $\phi(\xtil) = x$ and $\phi(y_i) = y_i$ for $i=1,\dots,s$. 
Since $\phi$ also preserves the Hopf algebra structures, we conclude 
that $\phi$ is an isomorphism of Hopf algebras.
\end{proof}

\begin{proposition}
\label{xxpro2.3}
Let $B:= B_G(\{p_i\},\chi)$ as in subsection {\rm\ref{xxsec2.1}}.
There is an ascending chain of Hopf subalgebras
$$B\langle 1\rangle \subseteq B\langle 2\rangle 
\subseteq \cdots \subseteq B\langle n\rangle \cdots \subseteq B$$
such that $B=\bigcup_{n=1}^{\infty} B\langle n\rangle $
and each $B\langle n\rangle$ is a finitely generated Hopf
algebra of type $B$ as in 
\cite[Construction 1.2]{GZ}.
\end{proposition}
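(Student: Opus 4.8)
The plan is to exhaust $B$ by the finitely generated Hopf subalgebras produced in Lemma \ref{xxlem2.2}. First I would note that the index set $I$ is countable: since the $p_i$ are pairwise relatively prime integers $\ge 2$, distinct indices give distinct values, and sending $p_i$ to its least prime divisor is an injection of $\{p_i \mid i\in I\}$ into the set of primes. Likewise $G$ is countable, being a subgroup of $\QQ$, hence it is the union of an ascending chain $\Gtil_1 \subseteq \Gtil_2 \subseteq \cdots$ of finitely generated subgroups. Enumerating $I$, I would also choose an ascending chain of finite subsets $\Itil_1 \subseteq \Itil_2 \subseteq \cdots$ with $\bigcup_n \Itil_n = I$ and $|\Itil_1| \ge 2$, which is possible since $|I| \ge 2$.

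Next, after replacing each $\Gtil_n$ by the subgroup of $G$ it generates together with $\{1/p_i \mid i \in \Itil_n\}$ — which keeps it finitely generated, keeps the chain ascending, leaves the union equal to $G$, and forces $\ZZ \subseteq \Gtil_n$ — I would define $B\langle n\rangle$ to be the subalgebra of $B$ generated by $\{x^a \mid a\in \Gtil_n\} \sqcup \{y_i \mid i \in \Itil_n\}$. Then $1/p_i \in \Gtil_n$ for every $i\in \Itil_n$, so Lemma \ref{xxlem2.2} applies: each $B\langle n\rangle$ is a Hopf subalgebra of $B$, and since $|\Itil_n| \ge 2$, part (3) of that lemma identifies it, up to isomorphism of Hopf algebras, with one of the algebras $B(n',p_0,\dots,p_s,q)$ of \cite[Construction 1.2]{GZ}; in particular it is finitely generated, being generated by the $y_i$ with $i\in \Itil_n$ together with $x^{\pm a}$ for $a$ ranging over a finite generating set of $\Gtil_n$.

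It then remains to check the two global assertions. The chain is ascending because $\Gtil_n \subseteq \Gtil_{n+1}$ and $\Itil_n \subseteq \Itil_{n+1}$. For the union, every algebra generator $x^a$ of $B$ lies in $B\langle n\rangle$ as soon as $a \in \Gtil_n$, and every $y_i$ lies in $B\langle n\rangle$ as soon as $i \in \Itil_n$; since the $x^a$ and the $y_i$ generate $B$, this gives $B = \bigcup_{n=1}^\infty B\langle n\rangle$.

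There is no serious obstacle: the substantive content — that a subalgebra generated by such a finite package of grouplikes and skew primitives is in fact a Hopf subalgebra, and is of type $B$ — is precisely Lemma \ref{xxlem2.2}. The only points that need care are (i) verifying that $I$ and $G$ are countable, so that one genuinely obtains an $\NN$-indexed chain and not merely a directed union of finitely generated Hopf subalgebras, and (ii) bookkeeping the exhaustion so that $1/p_i \in \Gtil_n$ whenever $y_i$ is included and $|\Itil_n| \ge 2$ throughout, which is exactly what makes Lemma \ref{xxlem2.2}(3) applicable and yields type $B$ rather than type $A$.
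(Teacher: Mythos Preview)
Your proposal is correct and follows essentially the same approach as the paper: exhaust $G$ and $I$ by ascending chains of finitely generated subgroups and finite subsets (with $|\Itil_n|\ge 2$ and $1/p_i\in\Gtil_n$ for $i\in\Itil_n$), then invoke Lemma~\ref{xxlem2.2}(3) to identify each resulting subalgebra as a type~B Hopf algebra. The paper's proof is slightly terser---it simply enumerates $G$ and $I$ and takes $\Gtil\langle n\rangle$ generated by $\{g_1,\dots,g_n,1/p_1,\dots,1/p_{n+1}\}$---but the substance is identical.
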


\begin{proof} Since $G$ is countable, we list its elements as 
$\{g_1,g_2,\cdots, g_n,\cdots\}$. Write $I$ as either $\{1,2,\dots\}$ or $\{1,\dots,t\}$, and in the latter case set $p_i=p_t$ for all $i>t$. For $n\in \NN$, let $\Gtil\langle n\rangle$
be the subgroup of $G$ generated by 
$\{g_1,\cdots, g_n,1/p_1, \dots, 1/p_{n+1} \}$. Let $B\langle n\rangle$ be the
subalgebra of $B$ generated by $\Gtil\langle n \rangle$
and $\{y_1, \dots, y_{n+1} \}$. By Lemma \ref{xxlem2.2}(3),
$B\langle n\rangle$ is a finitely generated Hopf
algebra of type $B$ as in \cite[Construction 1.2]{GZ}.
It is clear that $B=\bigcup_{n=1}^{\infty} B\langle n\rangle$.
\end{proof}

%%%%%%%%%%%%%%%%%%%%%%%
\subsection{Basic properties}
\label{xxsec2.3}
We next derive some basic properties of the Hopf algebras $B_G(\{p_i\},\chi)$. 
A skew primitive
element of this Hopf algebra is called {\it non-trivial} if it is not
in $kG$. Recall the notation $x := x^1$ and $y := y^1$.

\begin{lemma}  \label{xxlem2.4}
Let $B:= B_G(\{p_i\},\chi)$ as in subsection {\rm\ref{xxsec2.1}}.
\begin{enumerate}
\item[(1)]
The only grouplike elements of $B$ are the $x^a$ for $a\in G$.
\item[(2)]
For $a\in G$, all $(1,x^a)$-skew primitive elements of 
$B$ are in $k(1-x^a)+ kM$.
\item[(3)]
Every non-trivial skew primitive element of $B$ is of the form 
$by_i +c(1-x_i)$ or $b y+ c(1-x)$ for some scalars $b,c\in k$.
\end{enumerate}
\end{lemma}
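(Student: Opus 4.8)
The plan is to exploit the directed-union structure from Proposition \ref{xxpro2.3}, reducing each assertion to the corresponding statement about the finitely generated Hopf algebras $B\langle n\rangle \cong B(n,p_0,\dots,p_s,q)$ of \cite[Construction 1.2]{GZ}, for which the grouplike and skew primitive elements were presumably determined in \cite{GZ}. Concretely, any grouplike or skew primitive element $z$ of $B$, together with its weight $g$, lies in some $B\langle n\rangle$ (since $B = \bigcup_n B\langle n\rangle$, and the weight is also grouplike hence also captured at some finite stage); one then applies the known description inside that finite piece and takes the union over $n$. For (1), grouplikes of each $B\langle n\rangle$ are exactly the $x^a$ with $a$ in the corresponding finitely generated subgroup $\Gtil\langle n\rangle$, and these exhaust $\{x^a \mid a\in G\}$ in the limit. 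For (2) and (3), the point is that a $(1,x^a)$-skew primitive element of $B\langle n\rangle$ — where $x^a = \xtil^{\,j}$ for the generator $\xtil = x^{1/mn}$ — is, by the classification of skew primitives in $B(n,\dots)$, a scalar combination of $1-x^a$ and a monomial of the form $y^b$ with $b\in M\cap \Gtil\langle n\rangle$; and a \emph{non-trivial} one forces $x^a$ to be one of the weights $x_i$ or $x$ and $z$ to be the asserted $by_i + c(1-x_i)$ or $by + c(1-x)$.

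I would organize the argument as follows. First establish (1) directly: if $g\in B$ is grouplike, pick $n$ with $g\in B\langle n\rangle$; since $B\langle n\rangle$ is a type-$B$ Hopf algebra whose group of grouplikes is $\{\xtil^{\,j}\} = \{x^a \mid a\in \Gtil\langle n\rangle\}$, we get $g = x^a$ for some $a\in G$. Second, for (2), given a $(1,x^a)$-skew primitive $z$, choose $n$ large enough that $z\in B\langle n\rangle$ and $a\in \Gtil\langle n\rangle$; invoking the description of $(1,x^a)$-skew primitives in $B(n,p_0,\dots,p_s,q)$ from \cite{GZ}, conclude $z\in k(1-x^a) + k\Mtil\langle n\rangle \subseteq k(1-x^a) + kM$, where $\Mtil\langle n\rangle = \sum_{i\le n+1}\Znonneg(1/p_i)$ and we use that $y$-monomials in $B\langle n\rangle$ are precisely the $y^b$ with $b\in \Mtil\langle n\rangle$. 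Third, for (3): writing such a non-trivial $z$ via (2) as $z = c(1-x^a) + \sum_b \lambda_b y^b$ with not all $\lambda_b$ ($b\neq 0$) zero, apply $\Delta$ and compare with $z\otimes 1 + x^a\otimes z$. Since $\Delta(y^b) = \sum \binom{p}{k}_{q_b} (x^{?})\cdots$ is a genuine sum of $\ge 2$ tensor terms whenever $b$ is not of the special form $1/p_i$ or $1$ — and the weight of a nonzero $y^b$-term is forced to equal $x^a$ — one finds that only a single $y^b$ with $b = 1/p_i$ (so $x^a = x_i$) or $b = 1$ (so $x^a = x$) can occur, giving exactly $z = by_i + c(1-x_i)$ or $z = by + c(1-x)$.

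The main obstacle I anticipate is the bookkeeping in step three: verifying that no $y^b$ with $b$ a ``mixed'' element of $M$ (e.g.\ $1/p_i + 1/p_j$, or $2/p_i$) can appear in a skew primitive element. This is where one genuinely uses the $q$-binomial expansion of $\Delta(y^b)$ and the primitivity of the roots of unity $\chi(1/p_i^2)$ — the coproduct of such a $y^b$ has more than two homogeneous components (with respect to the $G$-grading by weight on the two tensor factors), and these cannot cancel against each other or against the $c(1-x^a)$ part, since the latter contributes only in weights $x^0$ and $x^a$. One must also handle the interaction of distinct $y^b$-terms: different $b$ give terms of different $M$-degree, so there is no cross-cancellation, and the condition that $z$ be $(1,x^a)$-skew primitive must hold term by term. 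Once this degree/weight separation is set up cleanly, each offending monomial is eliminated by the same elementary computation already performed in the ``Hopf structure'' paragraph of subsection \ref{xxsec2.1}, namely the identification of $\Delta(y_i)$ and of $\delta_i^{p_i}$. Alternatively — and this may be the cleaner route — one can bypass step three's direct computation entirely by again descending to $B\langle n\rangle$ and quoting the classification of \emph{non-trivial} skew primitives in $B(n,p_0,\dots,p_s,q)$ from \cite{GZ}, which should already assert that they have the stated shape; the union over $n$ then yields (3) immediately.
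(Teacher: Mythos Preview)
Your reduction-to-the-affine-case strategy is sound, and for part (3) it is exactly what the paper does: reduce via Proposition~\ref{xxpro2.3} to a finitely generated $B\langle n\rangle$ of type~B and then invoke the external classification of its skew primitives. One correction: the reference you want is \cite[Lemma~2.9(a)]{WZZ1}, not \cite{GZ}; the skew primitives of the type-B algebras are not worked out in \cite{GZ}, and in fact the present paper's Proposition~\ref{xxpro3.4}(3) is \emph{derived from} Lemma~\ref{xxlem2.4}, so citing anything equivalent to it would be circular.

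For parts (1) and (2) the paper takes a genuinely different route. Rather than descending to $B\langle n\rangle$ and quoting an external source, it constructs the Hopf surjection $\pi:B\to kG$ (identity on $kG$, killing all $y_i$), forms the right $kG$-comodule structure $\rho=(\id\otimes\pi)\Delta$, and identifies the coinvariants $B^{\co\rho}=kM$. Part (1) then falls out because a grouplike $z$ satisfies $\rho(z)=z\otimes\pi(z)$, forcing $z\in x^a\cdot kM$ for a single $a$, and then one uses that the only units of $B$ lie in $kG$. Part (2) is obtained by observing that $\pi(z)$ is $(1,x^a)$-skew primitive in $kG$, hence a scalar multiple of $1-x^a$, so $z':=z-\alpha(1-x^a)$ has $\rho(z')=z'\otimes 1$ and therefore lies in $kM$. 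Your approach would also work, but note that for (2) the external reference only pins down the \emph{non-trivial} skew primitives; you would still need the small computation (which the paper carries out) that any $(1,x^a)$-skew primitive lying in $kG$ is already in $k(1-x^a)$. The paper's comodule argument has the advantage of being self-contained and of setting up the $\rho$-coinvariant description $B^{\co\rho}=kM$, which is reused verbatim in the proof of Proposition~\ref{xxpro2.5}; your approach is shorter to write down but imports more from outside.
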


\begin{proof} It  is clear from the presentation in \eqref{E2.0.2} 
that the identity map on $kG$ extends to an algebra map 
$\pi: B \rightarrow kG$ such that $\pi(y_i) =0$ for all $i\in I$. 
We observe that $\pi$ is a Hopf algebra map, and thus that the map 
$\rho := (\id\otimes\,\pi)\Delta : B \rightarrow B\otimes kG$ makes 
$B$ into a right $kG$-comodule algebra. We claim that the subalgebra 
of $\rho$-coinvariants, $B^{\co\rho}$, equals $kM$. Since 
$\Delta(y_i) = y_i\otimes 1+ x_i\otimes y_i$ for $i\in I$, we see 
that each $y_i$ is a $\rho$-coinvariant, and consequently 
$kM \subseteq B^{\co\rho}$.

Consider a nonzero element $z\in B$, and write 
$z = \sum_{l=1}^m x^{a_l} z_l$ for some distinct elements 
$a_l \in G$ and some nonzero elements $z_l \in kM$. Then
\begin{equation}  \label{E2.4.1} \tag{2.4.1}
\rho(z) = \sum_{l=1}^m x^{a_l} z_l \otimes x^{a_l}.
\end{equation}
If $z$ is a $\rho$-coinvariant, we must have $m=1$ and 
$a_1=0$, whence $z=z_1\in kM$. Therefore $B^{\co\rho} = kM$, as 
claimed.

(1) Let $z\in B$ be grouplike, and write $z = \sum_{l=1}^m x^{a_l} z_l$ 
as above. Then 
$$\rho(z) = (\id\otimes\,\pi)(z\otimes z) = \sum_{l=1}^m x^{a_l} z_l \otimes \pi(z).$$
Comparing this with \eqref{E2.4.1}, we find that $x^{a_l} = \pi(z)$ for all $l$. 
In particular, the elements $x^{a_l}$ are all the same, so we must have $m=1$, 
and $z= x^{a_1}z_1$. Now $z_1 = x^{-a_1}z$ is grouplike, and it suffices to 
show that $z_1=x^a$ for some $a\in G$. Thus, we may assume that $z\in kM$. 

Recall from subsection \ref{xxsec2.1} 
that $B$ is a directed union of subalgebras of skew polynomial rings of the 
form $kM_c \# kG = kG[y^{1/c}; \sigma_c]$ (see the end of {\bf Algebra structure}). 
In such a skew polynomial ring, the 
only units are the units of $kG$, so the only units in $B$ are those in $kG$. 
Since grouplike elements are units, we obtain $z\in kG$, and therefore $z=1=x^0$.

(2) We first show that any nonzero $(1,x^a)$-skew primitive element 
$w\in kG$ must be a scalar multiple of $1-x^a$. Write 
$w= \sum_{l=1}^m \alpha_l x^{a_l}$ for some $\alpha_l \in \kx$ and 
some distinct $a_l \in G$. Then
\begin{equation}  \label{E2.4.2} \tag{2.4.2}
\sum_{l=1}^m \alpha_l x^{a_l} \otimes x^{a_l} = \Delta(w) 
= \sum_{l=1}^m \alpha_l \bigl( x^{a_l} \otimes 1 
+ x^a \otimes x^{a_l} \bigr).
\end{equation}
It follows that any nonzero $a_l$ must equal $a$, whence $m\le 2$. 
If $m=1$, then, after multiplying by $\alpha_1^{-1}$, \eqref{E2.4.2} 
reduces to 
$$x^{a_1} \otimes x^{a_1} = x^{a_1}\otimes1 + x^a\otimes x^{a_1},$$
which is impossible. Thus, after a possible renumbering, we must 
have $a\ne0$ and $w= \alpha_1 + \alpha_2 x^a$. In this case, 
\eqref{E2.4.2} says that
$$\alpha_1\otimes1 + \alpha_2 x^a\otimes x^a = 
\alpha_1(1\otimes1+ x^a\otimes1) + 
\alpha_2(x^a\otimes 1+ x^a\otimes x^a).$$
It follows that $\alpha_1+\alpha_2=0$ and so $w= \alpha_1(1-x^a)$, 
as desired.

Now suppose that $z$ is a $(1,x^a)$-skew primitive element of $B$, 
for some $a\in G$. Then $\pi(z)$ must be a $(1,x^a)$-skew primitive 
element of $kG$, and so the claim above shows that 
$\pi(z) = \alpha (1-x^a)$ for some $\alpha \in k$. Consequently, 
$z' := z- \alpha (1-x^a)$ is a $(1,x^a)$-skew primitive element 
of $B$ with $\pi(z') =0$. Then
$$\rho(z') = (\id\otimes\,\pi)(z'\otimes 1 + x^a\otimes z') = z'\otimes 1,$$
whence $z'\in B^{\co\rho} = kM$. Therefore $z\in k(1-x^a)+ kM$.

(3) By Proposition \ref{xxpro2.3}, we may assume that $B$ is finitely
generated and isomorphic to a Hopf algebra of type B as in 
\cite[Construction 1.2]{GZ}. By \cite{WZZ1}, these type B Hopf
algebras form a special class of the $K(\{p_s\}, \{q_s\}, \{\alpha_s\}, M)$
defined in \cite[Section 2]{WZZ1}. By \cite[Lemma 2.9(a)]{WZZ1},
any non-trivial skew primitive element $f$ in $B$ is a linear
combination of $\{y_i\}_{i\in I}$ and $y$ modulo $kG$. Write 
a $(1,g)$-skew primitive element $f$ as 
$f=ay+ \sum_{i\in I} a_i y_i +f_0$ where $f_0\in kG$. By 
\cite[Lemma 2.9(a)]{WZZ1}, $g=x_i$ or $x$. Since all $x_i$ and $x$ are
distinct, we have that only one of $\{a_i\}_{i\in I}\cup \{a\}$ is 
nonzero. The assertion follows by combining this with part (2).
\end{proof}

The next proposition is similar to \cite[Lemma 1.3]{GZ}.

\begin{proposition}  
\label{xxpro2.5}
Let $B:= B_G(\{p_i\},\chi)$ and $\Btil:= B(\Gtil,\{\ptil_i\},\chitil)$ 
as in subsection {\rm\ref{xxsec2.1}}. Then 
$B \cong \Btil$ if and only if $G=\Gtil$, $\{p_i\mid i\in I\} = 
\{\ptil_i \mid i\in \Itil\}$, and $\chi= \chitil$.
\end{proposition}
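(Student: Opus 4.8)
The \textbf{``if'' direction} is immediate: the Hopf algebra $B_G(\{p_i\},\chi)$ is manufactured from the triple $(G,\{p_i\},\chi)$ alone, and re-indexing $I$ merely relabels the generators $y_i$. So I would concentrate on the converse. Suppose $\phi\colon B\to\Btil$ is an isomorphism of Hopf algebras; the plan is to recover the three pieces of data from $\phi$, one at a time. Since $\phi$ is in particular a coalgebra map it carries grouplikes to grouplikes, so by Lemma~\ref{xxlem2.4}(1) it restricts to a group isomorphism from $\{x^a\mid a\in G\}$ onto $\{\tilde x^a\mid a\in\Gtil\}$; write $\phi(x^a)=\tilde x^{f(a)}$, where $f\colon G\to\Gtil$ is an isomorphism of abelian groups. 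Because every group homomorphism from a nonzero subgroup of $(\QQ,+)$ to $(\QQ,+)$ is multiplication by a unique rational, $f$ is multiplication by some nonzero $\lambda\in\QQ$ with $\lambda G=\Gtil$. I also record that $\phi(kG)=k\Gtil$, so $\phi$ sends elements outside $kG$ to elements outside $k\Gtil$.

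\textbf{Recovering $G$ and $\{p_i\}$.} Let $W\subseteq G$ be the set of $a$ for which $B$ contains a non-trivial $(1,x^a)$-skew primitive element. By Lemma~\ref{xxlem2.4}(3) --- and since each $y_i$ is non-trivial $(1,x^{1/p_i})$-skew primitive and $y=y^1$ is non-trivial $(1,x)$-skew primitive --- this set equals $\{1/p_i\mid i\in I\}\cup\{1\}$; write $\widetilde W\subseteq\Gtil$ for the analogous set attached to $\Btil$. Since $\phi$ preserves comultiplication, it sends a $(1,g)$-skew primitive to a $(1,\phi(g))$-skew primitive and preserves non-triviality, and the same holds for $\phi^{-1}$; hence $f$ maps $W$ bijectively onto $\widetilde W$. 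In particular $\lambda=f(1)\in\widetilde W$ and $\lambda/p_i=f(1/p_i)\in\widetilde W$ for every $i\in I$. Were $\lambda=1/\ptil_{j_0}$ for some $j_0$, the element $1/(\ptil_{j_0}p_i)$ would lie in $\widetilde W$ and be strictly less than $1$, hence equal $1/\ptil_j$ for some $j$; but $\ptil_{j_0}p_i=\ptil_j$ is impossible since $\ptil_{j_0},p_i\ge2$ and the $\ptil$'s are pairwise coprime. Therefore $\lambda=1$, so $G=\Gtil$, the map $\phi$ satisfies $\phi(x^a)=\tilde x^a$ for all $a$, and $W=\widetilde W$ inside $G$. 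Deleting the common point $1$ (every other point of either set being $<1$) gives $\{p_i\mid i\in I\}=\{\ptil_j\mid j\in\Itil\}$; fix the bijection $\sigma\colon I\to\Itil$ with $p_i=\ptil_{\sigma(i)}$ for all $i$.

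\textbf{Recovering $\chi$.} For each $i\in I$ the element $\phi(y_i)$ lies outside $k\Gtil$ and is a $(1,\phi(x_i))$-skew primitive with $\phi(x_i)=\tilde x^{1/p_i}=\tilde x_{\sigma(i)}\neq\tilde x$; by Lemma~\ref{xxlem2.4}(3) it therefore has the form $\phi(y_i)=b_i\tilde y_{\sigma(i)}+c_i(1-\tilde x_{\sigma(i)})$ for scalars $b_i,c_i$ with $b_i\neq0$ (else $\phi(y_i)\in k\Gtil$). Applying $\phi$ to the relation $x^ay_i=\chi(a/p_i)\,y_ix^a$ from~\eqref{E2.0.2} and comparing, on each side, the coefficient of the basis element $\tilde y_{\sigma(i)}\tilde x^a$ of $\Btil$ --- which is linearly independent of $\tilde x^a$ and $\tilde x^{a+1/p_i}$ --- gives $b_i\chitil(a/p_i)=\chi(a/p_i)b_i$, hence $\chi(a/p_i)=\chitil(a/p_i)$ for all $a\in G$ and $i\in I$. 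Since $GM=\sum_{i\in I}G(1/p_i)$ and both $\chi$ and $\chitil$ are group homomorphisms, this forces $\chi=\chitil$, completing the argument.

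\textbf{Main obstacle.} I expect the middle step to be the crux: one has to feed in the skew-primitive information from Lemma~\ref{xxlem2.4}, not merely the group of grouplikes, in order to upgrade the abstract isomorphism $G\cong\Gtil$ to the genuine equalities $G=\Gtil$ and $\{p_i\}=\{\ptil_j\}$ --- concretely, to force $\lambda=1$ using pairwise coprimeness and the fact that the exponents $1/p_i$ all lie strictly below $1$. The ancillary facts (that $f$ is multiplication by a rational; that $\tilde y_{\sigma(i)}\tilde x^a$ is a basis vector of $\Btil$ independent of $\tilde x^a$ and $\tilde x^{a+1/p_i}$) are routine, so I would dispatch them quickly.
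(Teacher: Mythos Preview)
Your proof is correct, and it takes a genuinely different route from the paper's. The paper first shows $\phi(kM)=k\Mtil$ by using Lemma~\ref{xxlem2.4}(2) together with the relation $x_iy_i=\chi(1/p_i^2)y_ix_i$ (with $\chi(1/p_i^2)\ne1$) to kill the $k(1-\xtil^{r/p_i})$ component of $\phi(y_i)$; it then builds the $G$- and $\Gtil$-gradings from the projections $\pi,\pitil$ onto the group algebras, checks that $\phi$ intertwines them, and deduces $rM=\Mtil$. From there it identifies the atoms of the monoids $M,\Mtil$ as the $1/p_i$ and $1/\ptil_j$, and a short divisibility argument on $r=s/t$ using two coprime $p_i$'s forces $r=1$.

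Your argument bypasses all of this structural machinery by invoking Lemma~\ref{xxlem2.4}(3) directly: you read off the set $W$ of weights of non-trivial skew primitives as $\{1/p_i\}\cup\{1\}$, note that $\phi$ matches $W$ with $\widetilde W$, and then a two-line coprimeness argument (if $\lambda=1/\ptil_{j_0}$ then $\lambda/p_i=1/\ptil_j$ forces $\ptil_{j_0}\mid\ptil_j$, impossible) pins down $\lambda=1$. This is shorter and more elementary; what the paper's approach buys is an explicit identification $\phi(kM)=k\Mtil$ and compatibility of the gradings, which are of independent interest but not needed for the bare isomorphism statement. Your extraction of $\chi=\chitil$ is essentially the same as the paper's final paragraph, only you do not first need to eliminate the $c_i(1-\tilde x_{\sigma(i)})$ term because you compare coefficients in the basis $\{\tilde y^b\tilde x^a\}$ rather than working inside $k\Mtil$.
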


\begin{proof} Let $\phi: B\rightarrow \Btil$ be an isomorphism of Hopf algebras.

Label the canonical generators of $B$ as above, namely as $x^a$ 
for $a\in G$ and $y_i$ for $i\in I$, and label those of $\Btil$ as 
$\xtil^a$ for $a\in \Gtil$ and $\ytil_i$ for $i\in \Itil$. Write 
$M$ and $\Mtil$ for the additive submonoids of $\QQ$ generated by 
$\{1/p_i \mid i\in I\}$ and $\{1/\ptil_i \mid i\in \Itil\}$, respectively.

The group of grouplike elements of $B$ is isomorphic to $G$, and 
that of $\Btil$ to $\Gtil$, so it follows from Lemma 
\ref{xxlem2.4}(1) that there is an isomorphism 
$\gamma: G\rightarrow \Gtil$ such that $\phi(x^a) = \xtil^{\gamma(a)}$ 
for all $a\in G$. Since $G$ and $\Gtil$ are additive 
subgroups of $\QQ$, $\gamma$ is given by multiplication by 
some $r\in \QQ^\times$. Thus, $\Gtil = rG$ and 
$\phi(x^a) = \xtil^{ra}$ for all $a\in G$.

We next show that $\phi$ maps $kM$ onto $k\Mtil$. For $i\in I$, 
the element $y_i$ is $(1,x^{1/p_i})$-skew primitive, and 
$x^{1/p_i}y_i= q_iy_i x^{1/p_i}$ where $q_i := \chi(1/p_i^2)$ 
is a primitive $p_i$-th root of unity. Then $\phi(y_i)$ is a 
$(1,\xtil^{r/p_i})$-skew primitive element of $\Btil$ such 
that $\xtil^{r/p_i} \phi(y_i) = q_i \phi(y_i) \xtil^{r/p_i}$. 
By Lemma \ref{xxlem2.4}(2), $\phi(y_i) = \alpha(1- \xtil^{r/p_i})+z$ 
for some $\alpha\in k$ and $z\in k\Mtil$. Hence,
$$q_i  \alpha(1- \xtil^{r/p_i})+ q_iz = q_i\phi(y_i) 
= \xtil^{r/p_i} \phi(y_i) \xtil^{-r/p_i} 
=  \alpha(1- \xtil^{r/p_i})+ \xtil^{r/p_i} z \xtil^{-r/p_i},$$
and so $(q_i-1) \alpha(1- \xtil^{r/p_i}) \in k\Mtil$. 
Since $q_i\ne 1$, this forces $\alpha=0$, whence 
$\phi(y_i) =z \in k\Mtil$. Thus, $\phi(kM) \subseteq k\Mtil$. 
By symmetry, $\phi^{-1}(k\Mtil) \subseteq kM$, and therefore 
$\phi(kM) = k\Mtil$.

As in the proof of Lemma \ref{xxlem2.4}, the identity map 
on $kG$ extends to a Hopf algebra map $\pi: B\rightarrow kG$ 
such that $\pi(y_i)=0$ for all $i\in I$, and there is a 
corresponding Hopf algebra map $\pitil: \Btil \rightarrow k\Gtil$. 
Since $\phi$ maps $kM\cap \ker\eps$ to $k\Mtil\cap \ker\eps 
\subseteq \ker\pitil$, we conclude that $\phi|_{kG} \pi = \pitil \phi$. 
Now the maps 
$$\lambda:= (\pi\otimes\id)\Delta : B\rightarrow kG\otimes B 
\qquad\text{and}\qquad \lambdatil:= (\pitil\otimes\id)\Delta : 
\Btil\rightarrow k\Gtil\otimes \Btil$$
make $B$ and $\Btil$ into left comodule algebras over $kG$ 
and $k\Gtil$, respectively, whence $B$ is $G$-graded and 
$\Btil$ is $\Gtil$-graded. Since $\phi|_{kG} \pi = \pitil \phi$, 
we see that $\bigl( \phi|_{kG}\otimes\phi \bigr) \lambda = 
\lambdatil \phi$, and thus $\phi$ transports the grading on $B$ 
to the grading on $\Btil$; namely, $\phi(B_a) = \Btil_{ra}$ 
for all $a\in G$.

For $a\in G$, we observe that $kM\cap B_a$ is nonzero if and 
only if $a\in M$, and similarly in $\Btil$. It follows that 
$rM = \Mtil$. Note that this forces $r>0$.

The atoms of the monoid $M$ (i.e., the additively 
indecomposable elements) are exactly the $1/p_i$ for $i\in I$, 
as one sees from the pairwise relative primeness of the $p_i$. 
Similarly, the atoms of $\Mtil$ are exactly the $1/\ptil_i$. 
Since we have an isomorphism $b\mapsto rb$ from $M$ onto 
$\Mtil$, it follows that $\{1/\ptil_i \mid i\in \Itil\} 
= \{r/p_i \mid i\in I\}$. Consequently, we may assume that 
$\Itil=I$ and $1/\ptil_i = r/p_i$ for all $i\in I$.

Finally, write $r=s/t$ for some relatively prime positive 
integers $s$, $t$, and reduce the final equation of the 
previous paragraph to $tp_i = s\ptil_i$. Thus, $s$ divides 
$tp_i$ for all $i\in I$. Since there are distinct indices 
$i,j\in I$, and $p_i$, $p_j$ are relatively prime, it follows 
that $s\mid t$. By symmetry, $t\mid s$, whence $r=1$. 
Therefore $\Gtil = G$ and $\ptil_i = p_i$ for all $i\in I$. 
Moreover, $\phi(kM \cap B_{1/p_i}) = k\Mtil\cap \Btil_{1/p_i}$, 
from which we see that $\ytil_i$ is a nonzero scalar multiple 
of $\phi(y_i)$ (due to the fact that $k\Mtil \cap \Btil_a$ is 
$1$-dimensional for all $a\in \Gtil$). It thus follows from 
the relations $x^a y_i = \chi(a/p_i) y_i x^a$ that 
$\xtil^a \ytil_i = \chi(a/p_i) \ytil_i \xtil^a$, whence 
$\chitil(a/\ptil_i) = \chi(a/p_i)$ for all $a\in G$ and $i\in I$. 
Therefore $\chitil = \chi$.
\end{proof}

%%%%%%%%%%%%%%%%%%%%%%%%%%%%%%%%%%%%%%%%%
%%%%%%%%%%%%%%%%%%%%%%%
\section{Initial analysis}
\label{xxsec3}

In this section we will finish most of the analysis of 
the pointed case. 

%%%%%%%%%%%%%%%%%%%%%%%
\subsection{Classification by GK-dimension of the coradical}
\label{xxec3.1}
Throughout this subsection we assume that Hopf algebras are 
pointed. We will review some results from other papers.

Suppose $H$ is a Hopf domain of GK-dimension two. Since
$H$ is pointed, the coradical $C_0(H)$ of $H$ is a group
algebra $kG$ for the group $G= G(H)$ of all group\-likes in $H$.
Hence, $\GKdim kG\leq \GKdim H=2$. Since $\GKdim kG$
is an integer, $\GKdim kG$ is either $0$, or $1$, 
or $2$, see \cite[Section 2.1]{WZZ1}. We shall refer to 
$\GKdim kG$ as the \emph{GK-dimension of $G$}, for short.

If $\GKdim kG=0$, then $C_0(H)=k$ (as $H$ is a domain).
This means that $H$ is connected. By \cite[Theorem 1.9]{WZZ1},
$H$ is isomorphic to $U({\mathfrak g})$ for a 2-dimensional
Lie algebra ${\mathfrak g}$. This is part (1) of the 
following proposition. 

For the statement of part (3) in the next proposition, 
recall that the localization $\ZZ_{(2)}$ is the subring 
of $\QQ$ consisting of all rational numbers with odd 
denominators. There is a nontrivial group homomorphism 
$\varphi : \ZZ_{(2)} \rightarrow \Aut \QQ$ such that 
$\ker\varphi = 2 \ZZ_{(2)}$ and the remaining elements 
of $\ZZ_{(2)}$ are sent to the automorphism $(-1)\cdot(-)$. 
We shall also use $\varphi$ to denote the corresponding 
homomorphism from a subgroup $R$ of $\ZZ_{(2)}$ to the 
automorphism group of a subgroup $L$ of $\QQ$.

\begin{proposition}
\label{xxpro3.1} Let $H$ be a pointed Hopf domain of GK-dimension 
two. 
\begin{enumerate}
\item[(1)] 
If $\GKdim C_0(H)=0$, then $H\cong U({\mathfrak g})$ for a 
$2$-dimensional Lie algebra ${\mathfrak g}$.
\item[(2)] 
If $\GKdim C_0(H)=2$ and $C_0(H)$ is commutative, then 
$H\cong kG$ where $G$ is a subgroup of 
${\mathbb Q}^2$ containing ${\mathbb Z}^2$.
\item[(3)]
If $\GKdim C_0(H)=2$ and $C_0(H)$ is not commutative, then 
$H\cong kG$ where $G = L\rtimes_\varphi R$ for some subgroup $L$ of 
${\mathbb Q}$ containing ${\mathbb Z}$
and some subgroup $R$ of $\ZZ_{(2)}$ containing $\ZZ$.
\end{enumerate}
\end{proposition}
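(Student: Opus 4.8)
The plan is to treat Proposition~\ref{xxpro3.1} as a reduction statement: parts~(1) and~(2) quote results already available, and the real content is part~(3), the non-commutative rank-two coradical case. First I would dispose of part~(1) by noting that when $\GKdim C_0(H)=0$ and $H$ is a domain, $C_0(H)=k$, so $H$ is connected of GK-dimension two, and then cite \cite[Theorem 1.9]{WZZ1} to conclude $H\cong U(\mf g)$ with $\dim\mf g=2$. For part~(2), when $\GKdim C_0(H)=2$ and $C_0(H)=kG$ is commutative, $G$ is a torsionfree abelian group of rank two; I would argue that $H=kG$ by showing there are no non-trivial skew primitive elements (otherwise $\GKdim H>2$, or one invokes the structure theory of pointed Hopf domains bounding $\GKdim H$ below by $\GKdim C_0(H)$ plus the number of independent skew primitives), and then identify a rank-two torsionfree abelian group as a subgroup of $\QQ^2$ containing $\ZZ^2$ after rescaling generators.

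The substantive work is part~(3). Here $C_0(H)=kG$ with $G=G(H)$ a non-commutative group whose group algebra has GK-dimension two; such $G$ has polynomial growth of degree two, so by Gromov/Bass it is virtually nilpotent of Hirsch length two, and being the grouplikes of a Hopf domain it is torsionfree. A torsionfree group of Hirsch length two that is non-abelian must be (isomorphic to) a soluble group with an abelian normal subgroup $L$ of rank one and quotient of rank one acting non-trivially; concretely $G\cong L\rtimes R$ with $L,R$ rank-one torsionfree abelian, i.e. subgroups of $\QQ$. I would then pin down the action: an automorphism of a rank-one group $L\subseteq\QQ$ is multiplication by a unit of the subring of $\QQ$ it generates, and the only automorphisms of finite-type-compatible rank-one groups giving polynomial (not exponential) growth are $\pm1$; so the action of $R$ on $L$ factors through $\{\pm1\}$, i.e. through a homomorphism $R\to\ZZ/2$. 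This forces the kernel to have index two in $R$, and a clean way to encode "$R$ maps onto $\{\pm1\}$ with kernel of index two" for a rank-one torsionfree $R$ is exactly that $R$ embeds in $\ZZ_{(2)}$ with the $-1$-elements being those outside $2\ZZ_{(2)}$; this is where the $\varphi$ set up in the paragraph preceding the proposition gets used. Finally I would show $H=kG$ (again: no non-trivial skew primitives, else the GK-dimension exceeds two), and rescale to arrange $\ZZ\subseteq L$ and $\ZZ\subseteq R$.

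The main obstacle I expect is the group-theoretic classification step in part~(3): showing that a non-abelian torsionfree group $G$ with $\GKdim kG=2$ is forced into the form $L\rtimes_\varphi R$ with $L\subseteq\QQ$, $R\subseteq\ZZ_{(2)}$, and the action given precisely by $\varphi$. The polynomial-growth input gives virtual nilpotence, but squeezing out the exact semidirect-product shape, checking that the commutator subgroup is rank one, that the action is by $\pm1$ only, and that $R$ sits inside $\ZZ_{(2)}$ with the correct kernel $2\ZZ_{(2)}$, requires care — in particular ruling out any rank-one $R$ acting by a unit other than $-1$ (which would make $kG$ grow exponentially, contradicting $\GKdim kG=2$) and verifying that "index-two kernel containing no nontrivial $2$-divisible overgroup issues" matches embeddability in $\ZZ_{(2)}$. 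The secondary, easier, obstacle is the ``$H=kG$'' step: one must confirm that any non-trivial skew primitive element would push $\GKdim H$ strictly above $\GKdim C_0(H)=2$, which follows from the general lower bounds for GK-dimension of pointed Hopf algebras in terms of the coradical together with a skew primitive, as used elsewhere in the paper.
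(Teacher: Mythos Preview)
Your treatment of parts (1) and (2) matches the paper's: (1) is \cite[Theorem~1.9]{WZZ1}, and for both (2) and (3) the paper invokes \cite[Lemma~1.6]{WZZ1} to get $H=kG$ directly (the clean form of your ``no nontrivial skew primitive, else $\GKdim H>2$'' argument), then writes $G$ as the directed union of its finitely generated subgroups $N$ of GK-dimension two and cites \cite[Theorem~1.7]{WZZ1} to identify each such $N$ as either $\ZZ^2$ or the Klein bottle group $\ZZ\rtimes\ZZ$.

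For part (3) your route diverges from the paper's: you propose a global structure argument via Gromov and Hirsch length, whereas the paper glues the finitely generated pieces. Two issues arise. First, Gromov's theorem is stated for finitely generated groups, so to use it you must pass to finitely generated subgroups anyway --- at which point you are essentially reproving \cite[Theorem~1.7]{WZZ1} and landing at the paper's starting data. Second, and this is the genuine gap, the splitting $G\cong L\rtimes R$ is not automatic. Even after you locate a normal rank-one abelian $L$ (note $[G,G]$ is too small: in the Klein bottle group $G/[G,G]$ has torsion, so $L$ must be the preimage of that torsion, and checking \emph{this} $L$ is abelian already requires the local Klein-bottle structure), you still need a complement, and an extension of one rank-one torsionfree abelian group by another need not split in general. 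The paper constructs the complement by hand: it sets $Y_G=\bigcup_N Y_N$ using the intrinsic description $Y_N=\{b\in N\mid aba^{-1}=b^{-1}\}$, shows these are compatible as $N$ grows, proves $[G:C_G]=2$ for $C_G=\bigcup_N C_N(Y_N)$, fixes $x_G\notin C_G$, defines $X_G=\langle x_G\rangle Z(G)$, checks $X_G$ is a directed union of infinite cyclic groups, and verifies $X_GY_G=G$ with $X_G\cap Y_G=1$. Your outline offers no mechanism for producing this section. The final identification of the action with $\varphi$ and of $R$ as a subgroup of $\ZZ_{(2)}$ is essentially as you sketch; the paper phrases it as ``$-1$ has no square root in $\Aut L$, so $1\notin 2R$, hence no element of $R$ has even denominator''.
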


\begin{proof} (1) This is \cite[Theorem 1.9]{WZZ1}.
 
Let $C_0(H)=kG$. In both (2) and (3), we 
have $\GKdim kG=2$. By \cite[Lemma 1.6]{WZZ1}, $H=C_0(H)=kG$. Then 
\begin{equation}
\label{E3.1.1} \tag{3.1.1}
G=\bigcup_{N\in \cN} N,
\end{equation}
a directed union, where $\cN$ is the set of all finitely 
generated subgroups of $G$ of GK-dimension two. 

(2) If $H=C_0(H)$ is commutative, meaning $G$ is abelian, then 
every $N \in \cN$ is isomorphic
to ${\mathbb Z}^2$ by \cite[Theorem 1.7]{WZZ1}. If $M\subseteq N
\subseteq G$ where $M$ has GK-dimension two, then $M\cong N\cong
{\mathbb Z}^2$ and $N/M$ is finite.
This is true for all such $N$, which implies that $G/M$ is torsion.
Therefore $G$ is isomorphic to a subgroup of ${\mathbb Q}^2$ containing 
${\mathbb Z}^2$. Conversely, every subgroup of 
${\mathbb Q}^2$ containing ${\mathbb Z}^2$ has GK-dimension two.
The assertion follows. 

(3) This is the case when $G$ is non-abelian. By \cite[Theorem 1.7]{WZZ1}, 
we may assume that every $N$ in $\cN$ is isomorphic to the nontrivial 
semidirect product ${\mathbb Z}\rtimes {\mathbb Z}$. Then $N$ is 
generated by elements  $x_N$ and $y_N$ satisfying $x_N y_N x_N^{-1} 
= y_N^{-1}$. As is well known, $x_N$ and $y_N$ have infinite order. 
Moreover, the following properties are easily checked:
\begin{enumerate}
\item[(a)] 
$Z(N) = \langle x_N^2 \rangle$.
\item[(b)] 
$Y_N := \langle y_N \rangle$ is a normal subgroup of $N$.
\item[(c)] 
$C_N := C_N(Y_N) = Y_N Z(N)$ and $[N:C_N] = 2$.
\item[(d)] 
For any $a\in N \setminus C_N$, we have $a^2 \in Z(N)$ and 
$ay_Na^{-1} = y_N^{-1}$. Moreover, $\langle a \rangle Z(N)$ is 
infinite cyclic.
\end{enumerate}
A short calculation reveals that, for a fix $a$ in (d),
$$\{ b\in N \mid aba^{-1} = b^{-1} \;\; 
\text{for some} \;\; a \in N \} = Y_N$$
for any $N \in \cN$. It follows that $Y_N \subseteq Y_{M}$ 
whenever $N \subseteq M$ in $\cN$, and so
$$Y_G := \bigcup_{N\in \cN} Y_N$$
is a normal subgroup of $G$. Since each $Y_N$ is infinite cyclic, 
$Y_G$ is isomorphic to a subgroup $L$ of $\QQ$ containing $\ZZ$.

If $N \subseteq M$ in $\cN$, then $N\cap C_{M} \subseteq C_N$, 
because $Y_N \subseteq Y_M$. Since $[M : C_M] = 2$, it follows 
that $N\cap C_M = C_N$. In particular, $C_N \subseteq C_M$. Now
$$C_G := \bigcup_{N \in \cN} C_N$$
is a normal subgroup of $G$ containing $Y_G$. Pick some $N_0 \in 
\cN$, and set
$$x_G := x_{N_0} \,.$$
For any $N \in \cN$ containing $N_0$, the equation $N_0 \cap C_N 
= C_{N_0}$ implies $x_G \notin C_N$. Consequently, in view of (c),
we have 
\begin{enumerate}
\item[(e)] $x_G \notin C_G$ and $[G:C_G] = 2$.
\end{enumerate}

When $N \subseteq M$ in $\cN$, we have $x_N \notin C_M$ 
because $N\cap C_M = C_N$, and so it follows from (d) and (a) 
that $Z(N) \subseteq Z(M)$. Thus,
$$Z(G) = \bigcup_{N \in \cN} Z(N).$$
If we now set
$$X_G := \langle x_G\rangle Z(G) = 
\bigcup_{N_0 \subseteq N \in \cN} \langle x_G \rangle Z(N),$$
then (d) tells us that $X_G$ is a directed union of infinite 
cyclic groups. Hence, $X_G$ is isomorphic to a subgroup $R$ of 
$\QQ$ containing $\ZZ$, with $x_G \mapsto 1$. Moreover,
$$X_G Y_G = \langle x_G \rangle C_G = G,$$
because of (e). Note also that $Y_G Z(G) \subseteq C_G$, 
whence $X_G \cap Y_G \subseteq \langle x_G^2 \rangle Z(G) = Z(G)$, 
and so $X_G \cap Y_G \subseteq Y_G \cap Z(G) = 1$. Therefore
$$G \cong Y_G \rtimes X_G \cong L \rtimes_\alpha R$$
for some homomorphism $\alpha : R \rightarrow \Aut L$.

Because of (e) and (d), we have $x_G y x_G^{-1} = y^{-1}$ 
for all $y \in Y_G$, so $\alpha(1)$ must be the automorphism 
$\nu := (-1)\cdot(-)$ of $L$. Since the automorphisms of $L$ 
are given by multiplication by certain elements of $\QQ^\times$, 
the automorphism $\nu$ has no square root in $\Aut L$, whence 
$1 \notin 2R$. It follows that $a/b \notin R$ for any odd 
integer $a$ and any nonzero even integer $b$, and therefore 
$R \subseteq Z_{(2)}$. We similarly conclude that 
$\alpha(R) = \{\id_L, \nu\}$, whence 
$\alpha(R \cap 2\ZZ_{(2)}) = \{\id_L\}$ and 
$\alpha(R \setminus 2\ZZ_{(2)}) = \{ \nu \}$. Therefore 
$\alpha = \varphi$, completing the proof.
\end{proof}

The only case left is when $C_0(H)=kG$ and $G$ has rank one.
By Lemma \ref{xxlem1.1}(3), $G$ is isomorphic to a subgroup of 
$\QQ$ containing ${\mathbb Z}$. Further analysis
assuming this is given in the next subsection. 

%%%%%%%%%%%%%%%%%%%%%%%
\subsection{Analysis of skew primitives}
\label{xxsec3.2}

In the first result of this subsection, we assume that $H$ is a 
pointed Hopf domain of GK-dimension two and that $C_0(H)=kG$ where
$G$ is a subgroup of $\QQ$ containing ${\mathbb Z}$. We identify 
elements $a\in G \subseteq \QQ$ with elements labeled $x^a \in G(H)$. 
Since type A and type C Hopf algebras are easy to understand 
(Examples \ref{xxex1.2} and \ref{xxex1.3}), we are focusing on 
algebras that are not types A and C. By Corollary \ref{xxcor1.5}, we 
may assume that $H$ contains $A(1,1)$ as a Hopf subalgebra. This means 
that $H$ contains a grouplike element $x\in G$ and a skew primitive
element $y\notin kG$ such that
\begin{equation}
\label{E3.1.2} \tag{3.1.2}
xy=yx, \quad {\text{and}}\quad \Delta(y)=y\otimes 1+ x\otimes y.
\end{equation}
By replacing $G$ with an isomorphic subgroup of $\QQ$, we may assume 
that $x = x^1$. 

\begin{lemma}
\label{xxlem3.2} If $y$ is, up to a scalar, the only nontrivial 
skew primitive element {\rm{(}}modulo $kG(H)${\rm{)}} in $H$, 
then $H$ is either type $A$ or type $C$.
\end{lemma}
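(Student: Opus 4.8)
The plan is to analyze the structure of $H$ under the hypothesis that $y$ is essentially the unique nontrivial skew primitive. First I would recall that $H$ is pointed with $C_0(H) = kG$ for $G$ a subgroup of $\QQ$ containing $\ZZ$, and that we may assume $x = x^1$ with the relations \eqref{E3.1.2}. Since $H$ contains $A(1,1)$, it is strictly larger than $kG$, so there must exist nontrivial skew primitive elements; by hypothesis they are all scalar multiples of $y$ modulo $kG$, and each must have some grouplike weight $g \in G$. The first key step is to pin down the weight of $y$: a priori $\Delta(y) = y \otimes 1 + x^{g} \otimes y$ for some $g \in G$, but \eqref{E3.1.2} fixes the weight to be $x = x^1$. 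The real content is to understand, for each $a \in G$, which $(1,x^a)$-skew primitive elements occur, and to show $H$ is generated as an algebra by $G$ together with $y$.

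Next I would use the coradical filtration. Set $H_0 = kG$ and $H_1 = kG + (\text{skew primitives})$; by the hypothesis, $H_1 = kG \oplus ky$ (up to adding $kG$-translates, but those are absorbed since $x^a y$ need not be skew primitive—care is needed here). The crucial claim is that $H$ is generated by $H_1$ as an algebra, i.e. $H = kG[y]$ as an algebra. One route: if $H \neq kG[y]$, then the coradical filtration reaches degree $\geq 2$, and a standard argument (taking an element of minimal filtration degree not in the subalgebra generated by $H_1$) produces, via $\Delta(z) - z \otimes 1 - (\text{grouplike}) \otimes z \in H_1 \otimes H_1$ type relations, either a new grouplike or a new skew primitive, contradicting the hypothesis (and Lemma \ref{xxlem2.4}-style reasoning, or the classification in \cite{WZZ1}). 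I expect to invoke results on pointed Hopf domains of GK-dimension two from \cite{WZZ1} to control the possibilities. Once $H = kG[y]$ as an algebra, $H$ is an Ore extension $kG[y;\sigma,\delta]$: the relation $x^a y = $ (something) $y x^a + $ (lower terms in $kG$) follows from commuting $y$ past grouplikes, using that $x^a y x^{-a}$ is again $(1, x^a x x^{-a}) = (1,x)$-skew primitive, hence of the form $\chi(a) y + c_a(1 - x)$ for scalars $\chi(a) \in \kx$ and $c_a \in k$.

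From there the argument splits. The map $a \mapsto \chi(a)$ is a character $G \to \kx$, and $a \mapsto c_a$ satisfies a cocycle-type identity. If $\chi$ is nontrivial, I would show (after possibly replacing $y$ by $y + (\text{scalar})(1-x^?)$, i.e. normalizing away the $c_a$ term, which is possible precisely because $\chi \neq 1$ makes $1 - \chi(a)$ invertible for suitable $a$) that $H \cong A_G(e,\chi)$ with $e = x$; here $\delta = 0$ after normalization. If $\chi$ is trivial, then all grouplikes commute with $y$, and the relation becomes $x^a y = y x^a + c_a(1-x)$ (a derivation), so $H \cong kG[y;\delta]$ with $\delta(x^a) = c_a(1-x) = \tau(a) x^a (x^{-1} - 1)$-type — matching the type C construction $C_G(e,\tau)$ with $e = x$; one checks $a \mapsto c_a$ gives an additive character (using $\ch k = 0$ to solve the relevant equations). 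I should also rule out the $A(1,1)$-internal case collapsing into something outside types A and C, but since $A(1,1) = A_\ZZ(1,0)$ is itself of type A with trivial $\chi$, consistency holds.

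The main obstacle, I expect, is the generation step: proving rigorously that $H$ is generated as an algebra by $kG$ and $y$, i.e. that no higher-degree ``exotic'' elements exist. The uniqueness-of-skew-primitives hypothesis controls the degree-one part of the coradical filtration, but one must propagate this up the filtration. The cleanest path is probably to reduce, via Proposition \ref{xxpro2.3}-style directed-union arguments or via \cite{WZZ1}, to the finitely generated case and then quote the classification of finitely generated pointed Hopf domains of GK-dimension two: those are exactly types A, B, C, and a type B algebra (when $|I| \geq 2$) has at least two independent nontrivial skew primitives by Lemma \ref{xxlem2.4}(3), which the hypothesis forbids — leaving only types A and C. I would then need to check that ``$H$ is a directed union of type A (resp.\ type C) finitely generated Hopf subalgebras, each with the same character/additive character data'' forces $H$ itself to be of type A (resp.\ C) over the full group $G$, which is a routine compatibility check once the local pieces are identified.
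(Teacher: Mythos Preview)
Your plan for the commutation analysis is essentially equivalent to the paper's: where you compute $x^a y x^{-a} = \chi(a) y + c_a(1-x)$ by hand and split into the cases $\chi$ nontrivial versus trivial, the paper simply cites \cite[Lemma 2.2(c)]{WZZ2} applied to $V = ky + k(x-1)$ to obtain the same dichotomy, yielding a surjective Hopf map from $A_G(x,\chi)$ or $C_G(x,\tau)$ onto the subalgebra $K$ generated by $kG$ and $y$. The paper then establishes injectivity of this map via \cite[Theorem 5.3.1]{Mo} (injectivity on $C_1$ suffices for pointed Hopf algebras), rather than by GK-dimension comparison; both work.

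The substantive divergence is in the generation step, i.e.\ showing $H = K$. The paper does not argue via directed unions of affine subalgebras; instead it invokes \cite[Corollary 6.9(2) and Proposition 2.4(2)]{WZZ3}: once $K \cong A_G(x,\chi)$ or $C_G(x,\tau)$, one has $\mathrm{PCdim}\, K = 1$, and since $K$ contains all grouplikes and skew primitives of $H$, this forces $H = K$. This is a clean black-box argument that makes no use of $(\natural)$.

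Your directed-union route has a gap as stated. You assert that the finitely generated pointed Hopf domains of GK-dimension two are ``exactly types A, B, C,'' but that classification (\cite[Theorem 0.1]{GZ}) requires $(\natural)$, and \cite{WZZ1} constructs further examples when $(\natural)$ fails. The standing hypotheses of Lemma~\ref{xxlem3.2} do not include $(\natural)$, so you cannot quote the affine classification directly. In the intended application (Theorem~\ref{xxthm4.2}) $(\natural)$ is available and descends to the affine subalgebras as in Lemma~\ref{xxlem4.1}, so your approach would go through there; but to prove the lemma in the generality stated you would need either to verify $(\natural)$ for the affine subalgebras from the unique-skew-primitive hypothesis, or to replace this step with the \cite{WZZ3} machinery the paper uses.
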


\begin{proof}
This follows from the proof of \cite[Theorem 7.1]{WZZ3}.

Let $K$ be the subalgebra of $H$ generated by $y$ and $C_0(H)$. It 
is clear that $K$ is a Hopf subalgebra. Applying \cite[Lemma 2.2(c)]{WZZ2} 
to $V := ky+k (x-1)$, there is an element $z\in V\setminus k(x-1)$
such that either
\begin{enumerate}
\item[(i)]
there is a character $\chi: G\to k^\times$ 
such that $h^{-1} z h=\chi(h) z$ for all $h\in G$, or
\item[(ii)]
there is an additive character $\tau: G\to k$ 
such that $h^{-1} z h= z+ \tau(h) (x-1)$ for all $h\in G$.
\end{enumerate}
In the first case $K$ is a quotient of $A_G(x,\chi)$ and in 
the second case $K$ is a quotient of $C_G(x, \tau)$. We claim 
that $K \cong A_G(x,\chi)$ in the first case and that 
$K \cong C_G(x,\tau)$ in the second case. We only prove the 
claim for the second case (the first case was given in the 
proof of \cite[Theorem 7.1]{WZZ3}).
Consider the natural Hopf map $ f: C_G(x,\tau)\to H$
which is injective on $C_0+C_0 z= C_1(H)$ by definition. 
By \cite[Theorem 5.3.1]{Mo}, $f$ is injective. 
Consequently, $K \cong C_G(x,\tau)$. By definition, $K$ is 
generated by all the grouplikes and skew primitive 
elements of $H$. By \cite[Corollary 6.9(2)]{WZZ3}, 
${\text{PCdim}} K = 1$. Finally, by 
\cite[Proposition 2.4(2)]{WZZ3}, $H = K$ as desired.
\end{proof}

\begin{notation}  
\label{xxnot3.3}
We shall also need information about the skew primitive 
elements of the affine Hopf domains of types A, B, C 
from \cite[Section 1]{GZ}. To make the notation compatible 
with the present paper, we express these Hopf algebras as follows.
\begin{enumerate}
\item[(1)]
$A(n,q)$, for $n \in \Znonneg$ and $q \in \kx$, is presented by 
generators $x^{\pm1}$, $z$ with $xz= qzx$, where $x$ is 
grouplike and $z$ is $(1,x^n)$-skew primitive. We restrict to 
$n \ge 0$ because $A(m,q) \cong A(-m,q^{-1})$.
\item[(2)]
$B(n,p_0,\dots,p_s,q)$, for $s\in \ZZ_{\ge2}$, 
$n,p_0,\dots,p_s \in \Zpos$, and $q\in \kx$ satisfying certain 
conditions, is presented by generators $x^{\pm1},y_1,\dots,y_s$ 
with relations described in \cite[Eqn.~(E1.2.1)]{GZ}, where 
$x$ is grouplike and each $y_i$ is $(1,x^{m_in})$-skew primitive. 
(Here $m_i = m/p_i$ with $m = p_1p_2 \cdots p_s$.) The restriction 
$s\ge2$ rules out the situation $B(n,p_0,p_1,q) \cong A(n,q)$.
\item[(3)]
$C(n)$, for $n \in \ZZ_{\ge2}$, is presented by generators 
$x^{\pm1}$, $z$ with the relation $zx = xz + (x^{2-n} - x)$, where $x$ is 
grouplike and $z$ is $(1,x^{1-n})$-skew primitive. We 
restrict to $n \ge 2$ because $C(1) \cong A(0,1)$ and 
$C(m) \cong C(2-m)$. In particular, this means $zx \ne xz$.
\end{enumerate}
\end{notation}

\begin{proposition}  
\label{xxpro3.4}
\begin{enumerate}
\item[(1)] 
Let $A = A(n,q)$ where $q^n$ is either $1$ or a non-root of unity. 
Then all nontrivial skew primitive elements in $A$ have weight 
$x^n$, and they are linear combinations of $z$ and $1-x^n$.
\item[(2)]  
Let $A = A(n,q)$ where $q^n$ is a primitive $d$-th root of unity 
for some $d>1$. Then any nontrivial skew primitive element of 
$A$ has weight either $x^n$ or $x^{dn}$, and it is a linear
combination of $z$ and $1-x^n$ or of $z^d$ and $1-x^{dn}$, respectively.
\item[(3)] 
Let $B = B(n,p_0,\dots,p_s,q)$, with $m$, $m_i$ as above. 
Any nontrivial skew primitive element of $B$ has weight 
either $x^{mn}$ or $x^{m_in}$ for some $i=1,\dots,s$, and 
it is a linear combination of $y_1^{p_1}$ and $1-x^{mn}$ or 
of $y_i$ and $1-x^{m_in}$, respectively.
\item[(4)] Let $C = C(n)$. All nontrivial skew primitive 
elements in $C$ have weight $x^{1-n}$, and they are linear 
combinations of $z$ and $1-x^{1-n}$.
\end{enumerate}
\end{proposition}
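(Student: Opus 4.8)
The plan is to exploit, in each of the four cases, a grading or filtration of the Hopf algebra by ``$z$-degree'' (respectively ``$y$-degree''), reducing the determination of skew primitives to a short computation with the comultiplication and with Gaussian binomial coefficients; the case of $B$ I would instead deduce from \cite{WZZ1}.

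\emph{Parts (1)--(2).} I would put on $A:=A(n,q)$ the $\Znonneg$-grading by $z$-degree, with $x$ (hence every grouplike) in degree $0$; the relations $xz=qzx$, $\Delta(x)=x\otimes x$ and $\Delta(z)=z\otimes1+x^{n}\otimes z$ show this is a grading of Hopf algebras, so a $(1,x^{N})$-skew primitive $w=\sum_{m\ge0}w_{m}$ decomposes into $(1,x^{N})$-skew primitive homogeneous pieces. The piece $w_{0}\in kG$ is $\alpha(1-x^{N})$ by the usual group-algebra computation. For $m\ge1$, writing $w_{m}=\sum_{i}c_{i}x^{i}z^{m}$, the $q$-binomial theorem (legitimate since $(x^{n}\otimes z)(z\otimes1)=q^{n}(z\otimes1)(x^{n}\otimes z)$) gives
\[
\Delta(z^{m})=\sum_{l=0}^{m}\binom{m}{l}_{q^{n}}\,z^{l}x^{n(m-l)}\otimes z^{m-l},
\]
and, using $z^{i}x^{j}=q^{-ij}x^{j}z^{i}$, one gets $\Delta(x^{i}z^{m})$ explicitly. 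Matching, in $\Delta(w_{m})=w_{m}\otimes1+x^{N}\otimes w_{m}$, the terms whose second tensor factor has $z$-degree $0$ forces $i=0$, i.e.\ $w_{m}=c_{0}z^{m}$; matching those whose first tensor factor has $z$-degree $0$ forces $N=nm$ when $c_{0}\ne0$; and the vanishing of the remaining ``mixed'' terms forces $\binom{m}{l}_{q^{n}}=0$ for all $0<l<m$. A standard fact (a consequence of the $q$-Lucas theorem) is that this last condition holds only for $m=1$ when $q^{n}$ is $1$ or not a root of unity, and only for $m\in\{1,d\}$ when $q^{n}$ is a primitive $d$-th root of unity with $d>1$. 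Since $z$ is $(1,x^{n})$-skew primitive, $z^{d}$ is $(1,x^{nd})$-skew primitive, and $n\ge1$ whenever $q^{n}$ is a nontrivial root of unity, a nontrivial $w$ uses exactly one of these top degrees; assembling the pieces yields (1) and (2).

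\emph{Part (4).} Here the $z$-degree yields only a Hopf-algebra \emph{filtration} $F_{\bullet}$ of $C:=C(n)$ (the relation $zx=xz+x^{2-n}-x$ is filtered, not graded), with associated graded $\gr_{F}C\cong A(1-n,1)$. If $w$ were a skew primitive of $C$ of $z$-degree $\ge2$, its symbol in $\gr_{F}C$ would be a skew primitive of $A(1-n,1)$ of $z$-degree $\ge2$, contradicting part (1) applied to $A(1-n,1)$ (where the relevant root of unity is $1$). Hence every skew primitive of $C$ lies in $kG+kGz$, so I would write $w=f+gz$ with $f,g\in kG$, expand $\Delta(w)=\Delta(f)+\Delta(g)(z\otimes1+x^{1-n}\otimes z)$, and separate $C\otimes C$ by the $z$-degrees of the two tensor factors. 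The $(0,0)$-component gives $f=\alpha(1-x^{N})$; the $(1,0)$- and $(0,1)$-components together give $g=0$ unless $N=1-n$, in which case $g$ is a scalar. So a nontrivial skew primitive has weight $x^{1-n}$ and is a combination of $z$ and $1-x^{1-n}$.

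\emph{Part (3).} By \cite{WZZ1}, $B:=B(n,p_{0},\dots,p_{s},q)$ is a member of the family $K(\dots)$ of \cite[Section 2]{WZZ1}, and by \cite[Lemma 2.9(a)]{WZZ1}---as already used in the proof of Lemma \ref{xxlem2.4}(3)---every nontrivial skew primitive of $B$ lies in $ky+\sum_{i=1}^{s}ky_{i}+kG$, where $y:=y_{1}^{p_{1}}=\dots=y_{s}^{p_{s}}$. The elements $y_{1},\dots,y_{s},y$ are skew primitive of pairwise distinct weights $x^{m_{1}n},\dots,x^{m_{s}n},x^{mn}$ (distinct because the $m_{i}$ are distinct and all strictly less than $m$), so a skew primitive of fixed weight $x^{N}$ involves at most one of them, while its $kG$-component is a scalar multiple of $1-x^{N}$; this gives (3). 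The chief obstacle throughout is the Gaussian-binomial step of (1)--(2): one must identify exactly the $m$ for which $\binom{m}{l}_{q^{n}}=0$ for all $0<l<m$---at a primitive $d$-th root of unity these are $m=1$ and $m=d$ \emph{only}, not, as one might first guess, every power of $d$---together with the (easy but essential) observation that $C(n)$ has no skew primitives of $z$-degree $\ge2$, which is what the passage to $\gr_{F}C(n)\cong A(1-n,1)$ supplies.
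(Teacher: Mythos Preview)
Your arguments for parts (1)--(3) are correct and coincide with the paper's: for (1)--(2) both expand $\Delta$ via the $q$-binomial formula for $(z\otimes1+x^n\otimes z)^m$ and sort terms by the $z$-degree of each tensor factor (you name the $\Znonneg$-grading explicitly; the paper carries out the same bookkeeping without naming it), and for (3) both invoke \cite[Lemma~2.9(a)]{WZZ1}. For part (4) you take a genuinely different route. The paper repeats the direct expansion of (1)--(2), but since $z$ and $x^{1-n}$ no longer quasi-commute in $C(n)$ there is no $q$-binomial identity available; instead the paper isolates the pieces of $\Delta(f)$ lying in $C\otimes k[x^{\pm1}]z^{d}$ and $C\otimes k[x^{\pm1}]z^{d-1}$ and finds in the latter a stray summand $d\,c_d\,x^{(d-1)(1-n)}z$ that has no match on the other side, contradicting $d\ge2$. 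Your filtration argument sidesteps this computation entirely: passing to $\gr_F C(n)\cong A(1-n,1)$ and invoking (1) rules out $z$-degree $\ge2$ at once. Your approach is cleaner and explains \emph{why} no higher-degree skew primitive exists (the associated graded already has none), at the modest cost of verifying that the $z$-degree filtration is a coalgebra filtration and that the symbol of a $(1,g)$-skew primitive remains $(1,g)$-skew primitive---both routine. The paper's computation is more hands-on but entirely self-contained.
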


\begin{proof} In each case, the elements of the stated forms are 
skew primitive with the given weights, as proved in 
\cite[Constructions 1.1, 1.2, 1.4]{GZ}. 

(1,2) Suppose $f \in A$ is a nontrivial skew primitive with 
weight $x^m$, for some $m\in\ZZ$. Write 
$f = \sum_{i=0}^d c_iz^i$ for some $c_i \in k[x^{\pm1}]$ with 
$c_d \ne 0$. Then  
\begin{equation}
\label{E3.4.1}\tag{3.4.1}
\sum_{i=0}^d \Delta(c_i) (z\otimes 1+ x^n\otimes z)^i 
= \Delta(f) 
= \sum_{i=0}^d c_iz^i\otimes 1 + \sum_{i=0}^d x^m \otimes c_iz^i.
\end{equation}
Comparing terms from $A \otimes k[x^{\pm1}]$ in this equation, 
we see that
$$
\sum_{i=0}^d \Delta(c_i) (z^i\otimes 1) 
= x^m\otimes c_0 + \sum_{i=0}^d c_iz^i \otimes 1.
$$
A comparison of terms from $k[x^{\pm1}] z^i \otimes k[x^{\pm1}]$ 
then yields $\Delta(c_i) = c_i \otimes 1$ for $i>0$ and 
$\Delta(c_0) = x^m\otimes c_0 + c_0\otimes 1$. It follows 
that $c_0 \in k(1-x^m)$ and $c_i \in k$ for $i>0$. The 
nontriviality of $f$ forces $d>0$.

If $d=1$, \eqref{E3.4.1} implies that $c_1 x^n \otimes z 
= x^m\otimes c_1z$, whence $m=n$. In this case, $f$ is a 
linear combination of $z$ and $1-x^n$, and we are done. Assume now that $d>1$.

Since $c_0 \in k(1-x^m)$ and $c_1,\dots,c_d$ are scalars, 
\eqref{E3.4.1} reduces to
$$
\sum_{i=1}^d c_i  (z\otimes 1+ x^n\otimes z)^i 
=  \sum_{i=1}^d  c_iz^i\otimes 1 +   \sum_{i=1}^d c_ix^m \otimes z^i .
$$
Comparing terms from $A \otimes kz^i$ yields
\begin{equation}
\label{E3.4.2}\tag{3.4.2}
\sum_{i=j}^d {i \choose j}_{q^n} c_i z^{i-j} x^{jn} 
= c_j x^m, \qquad \text{for} \;\; 1 \le j \le d.
\end{equation}
From the case $j=d$, we get $m= dn$. 

For $1\le j<d$, \eqref{E3.4.2} implies that 
${d \choose j}_{q^n} = 0$. Thus, by \cite[Lemma 7.5]{GZ}, 
$q^n$ must be a primitive $d$-th root of unity. As in 
\cite[Construction 1.1]{GZ}, it follows that $z^d$ is skew 
primitive with weight $x^m$, hence so is $f - c_d z^d$. If 
$f - c_d z^d$ is nonzero, let $e$ be its $z$-degree. Applying 
the above analysis to $f - c_d z^d$, we find that $e = 0$ or 
$m = en$, the latter case being impossible. Therefore 
$f - c_d z^d$ is a scalar multiple of $1 - x^{dn}$, and $f$ 
has the required form.

(3) This follows from Lemma \ref{xxlem2.4}.

(4) Suppose $f \in C$ is a nontrivial skew primitive with 
weight $x^m$, for some $m\in\ZZ$, and write  
$f = \sum_{i=0}^d c_iz^i$ for some $c_i \in k[x^{\pm1}]$ 
with $c_d \ne 0$. As in cases (1)(2), we get equation 
\eqref{E3.4.1}, but with $x^n$ replaced by $x^{1-n}$. 
Moreover, it follows that $c_0 \in k(1-x^m)$ and 
$c_i \in k$ for $i>0$, and then that $d>0$. In case $d=1$, 
we obtain $m= 1-n$ and $f$ is a linear combination of $z$ 
and $1-x^{1-n}$.

Now suppose that $d>1$. After canceling common terms, 
\eqref{E3.4.1} reduces to
$$
\sum_{i=1}^d c_i  (z\otimes 1+ x^{1-n}\otimes z)^i 
=  \sum_{i=1}^d  c_iz^i\otimes 1 +   \sum_{i=1}^d c_ix^m \otimes z^i .
$$
Comparing terms in $C \otimes k[x^{\pm1}] z^d$ in this 
equation, we find that $m=d(1-n)$. Turning to  
$C \otimes k[x^{\pm1}] z^{d-1}$, we obtain
$$
c_{d-1} x^{(d-1)(1-n)} + d c_d x^{(d-1)(1-n)} z + c_d h = c_{d-1} x^{1-n} 
$$
for some $h \in k[x^{\pm1}]$. Since $d c_d \ne 0$, this 
is impossible, and the proof is complete.
\end{proof}

\begin{corollary}
\label{xxcor3.5}
Suppose $H$ is one of the affine Hopf domains of 
types A, B, C. For any grouplike $g$ in $H$, the space of 
skew primitive elements in $H$ with weight $g$ has 
$k$-dimension at most $2$.
\end{corollary}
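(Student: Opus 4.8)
The statement is an immediate corollary of Proposition \ref{xxpro3.4}, and the plan is simply to unpack each case. Fix a grouplike element $g$ in one of the affine Hopf domains $H$ of type A, B, or C, and let $W_g$ denote the space of skew primitive elements of $H$ with weight $g$; recall that $W_g$ contains the scalar multiples of $1-g$ (which is always $(1,g)$-skew primitive) together with possibly nontrivial skew primitives. First I would dispose of the grouplikes $g$ for which no nontrivial skew primitive has weight $g$: in that case $W_g \subseteq k(1-g)$ (a skew primitive lying in $kG$ is a scalar multiple of $1-g$ by the argument at the start of the proof of Lemma \ref{xxlem2.4}(2), which applies equally here), so $\dim_k W_g \le 1$.

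It remains to treat the finitely many weights $g$ that do arise from nontrivial skew primitives. In type C, by Proposition \ref{xxpro3.4}(4) the only such weight is $g = x^{1-n}$, and every nontrivial skew primitive of that weight is a linear combination of $z$ and $1-x^{1-n}$; hence $W_{x^{1-n}} \subseteq kz + k(1-x^{1-n})$ is at most $2$-dimensional, and we are done. In type A, if $q^n$ is $1$ or a non-root of unity, Proposition \ref{xxpro3.4}(1) gives the single weight $g = x^n$ with $W_{x^n} \subseteq kz + k(1-x^n)$; if instead $q^n$ is a primitive $d$-th root of unity with $d>1$, Proposition \ref{xxpro3.4}(2) gives the two possible weights $x^n$ and $x^{dn}$, with $W_{x^n} \subseteq kz + k(1-x^n)$ and $W_{x^{dn}} \subseteq kz^d + k(1-x^{dn})$, each of dimension at most $2$. (Note $x^n \ne x^{dn}$ since $d>1$ and $x$ has infinite order, so there is no overlap to worry about, though even if one merged weights the bound would still need checking — here it does not arise.) In type B, Proposition \ref{xxpro3.4}(3) lists the weights $x^{mn}$ and $x^{m_in}$ for $i = 1,\dots,s$, with $W_{x^{mn}} \subseteq k y_1^{p_1} + k(1-x^{mn})$ and $W_{x^{m_in}} \subseteq k y_i + k(1-x^{m_in})$, again each at most $2$-dimensional.

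Since in every case and for every grouplike $g$ we have exhibited a subspace of dimension $\le 2$ containing all skew primitives of weight $g$, the corollary follows. There is essentially no obstacle: the only point requiring a word of care is confirming that the various listed weights $x^n, x^{dn}, x^{mn}, x^{m_in}, x^{1-n}$ are genuinely distinct grouplike elements (which follows from $x$ having infinite order together with the standing restrictions $n \ge 0$, $d > 1$, $n \ge 2$, $s \ge 2$ and the pairwise relative primeness of the $p_i$ in type B), so that no cancellation or combination of two different weight spaces can occur. Granting that, the dimension bound is read off directly from Proposition \ref{xxpro3.4}.
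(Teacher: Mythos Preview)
Your proposal is correct and is precisely the intended argument: the paper states Corollary \ref{xxcor3.5} without proof, treating it as an immediate consequence of Proposition \ref{xxpro3.4}, and what you have written is exactly the natural unpacking of that proposition case by case.
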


\begin{corollary}
\label{xxcor3.6}
Let $H_1 \subsetneqq H_2$ be affine Hopf domains of types A, B, C.
\begin{enumerate}
\item[(1)] 
If $H_2$ is of type A, so is $H_1$.
\item[(2)] 
If $H_1$ is of type B, so is $H_2$.
\item[(3)] 
$H_1$ is of type C if and only if $H_2$ is of type C.
\end{enumerate}
\end{corollary}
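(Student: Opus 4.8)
The plan is to extract everything from the description of skew primitive elements in Proposition \ref{xxpro3.4}, combined with the observation that an inclusion $H_1 \subsetneqq H_2$ of Hopf algebras forces compatibility of the grouplike groups and the skew primitive structures. First I would fix notation: in each of the three types the grouplike group is an infinite cyclic group $\langle x \rangle$ (after the normalizations of Notation \ref{xxnot3.3}), and in each type there is a distinguished ``top'' skew primitive generator — $z$ in types A and C, and $y_1^{p_1}$ (equivalently the element $y^1$) together with the $y_i$ in type B. The key structural fact I will use repeatedly is that if $H_1 \subseteq H_2$, then $G(H_1) \subseteq G(H_2)$, so $\langle x_1 \rangle \subseteq \langle x_2 \rangle$, and every skew primitive element of $H_1$ is a skew primitive element of $H_2$; hence its weight, which lies in $G(H_1)$, must appear in the list of weights given by Proposition \ref{xxpro3.4} for $H_2$, and the element itself must have the prescribed form in $H_2$.

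For part (1): suppose $H_2$ is of type A, say $H_2 = A(n,q)$. By Proposition \ref{xxpro3.4}(1)(2), every nontrivial skew primitive of $H_2$ has weight $x_2^n$ or (in the root of unity case) $x_2^{dn}$, and lies in $kz + k(1-x_2^n)$ or $kz^d + k(1-x_2^{dn})$. If $H_1$ were of type B or C, then by the description of its own generators $H_1$ would contain a nontrivial skew primitive $f$ whose $z$-expansion in $H_2$ (where $z$ is the type-A generator) is forced to be a scalar multiple of $z$, $z^d$, or $1 - (\text{grouplike})$. One then checks that the defining relations of type B or type C cannot be realized: in type C the relation $zx = xz + (x^{2-n}-x)$ is incompatible with $x_2 z = q z x_2$ (a purely multiplicative commutation), since the latter forces any skew primitive to quasi-commute with grouplikes, while in $C(n)$ the generator $z$ does not quasi-commute with $x$ (Notation \ref{xxnot3.3}(3) explicitly records $zx \ne xz$). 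In type B the relations force $H_1$ to contain two algebraically independent skew primitives $y_i, y_j$ with distinct weights $x^{m_in} \ne x^{m_jn}$, and their images in $H_2$ must both be scalar multiples of $z$ (or of $z^d$), which is impossible once one compares weights. Part (2) is the contrapositive repackaging: if $H_2$ were of type A or C, then by part (1) (applied with $H_2$ of type A) or by the type-C analysis below, $H_1$ would be forced to be of type A or C, contradicting $H_1$ of type B; so $H_2$ must be of type B. For part (3), the forward direction: if $H_1$ is of type C, then $H_1$ contains a skew primitive $z$ with $zx \ne xz$; since $H_2$ contains this same non-quasi-commuting pair, $H_2$ cannot be of type A (all skew primitives quasi-commute with grouplikes) nor of type B (the type-B relations \eqref{E2.0.2} have all grouplike–skew-primitive pairs quasi-commuting), so $H_2$ is of type C. The reverse direction: if $H_2$ is of type C, then by part (1) $H_1$ is not of type B unless $H_2$ is of type B (false), and it is not of type A because a type-A subalgebra generated by grouplikes and a skew primitive $z'$ would have $z'$ quasi-commuting with all of $G(H_1)$, but one uses Corollary \ref{xxcor3.5} and Proposition \ref{xxpro3.4}(4) — every skew primitive of $C(n)$ has a fixed weight $x^{1-n}$ and lies in $kz + k(1-x^{1-n})$ — to see that $H_1$ must itself inherit the non-quasi-commutativity; so $H_1$ is of type C.

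The main obstacle I anticipate is the bookkeeping in part (1) when $H_1$ is of type B and $H_2$ of type A: one must show that the two independent skew primitive generators $y_1, y_2$ of a would-be type-B $H_1$ cannot both live inside a single $A(n,q)$. The cleanest way is to pass to $H_2$'s structure and use Proposition \ref{xxpro3.4}(1)(2): the weights of all nontrivial skew primitives of $A(n,q)$ form a set of size at most $2$ (namely $\{x^n\}$ or $\{x^n, x^{dn}\}$), whereas a type-B Hopf algebra $B(n,p_0,\dots,p_s,q)$ with $s\ge 2$ has skew primitives of $s+1 \ge 3$ distinct weights $x^{m_1n},\dots,x^{m_sn},x^{mn}$ by Proposition \ref{xxpro3.4}(3); since weights are preserved under the inclusion $H_1 \hookrightarrow H_2$, this is a contradiction. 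The type-C exclusions are easier, resting entirely on the quasi-commutativity dichotomy recorded in Notation \ref{xxnot3.3}. Everything else is routine comparison of the explicit presentations.
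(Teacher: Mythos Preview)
Your proposal is correct and follows essentially the same route as the paper: the paper's proof also rests entirely on two invariants extracted from Proposition~\ref{xxpro3.4}, namely (i) whether nontrivial skew primitives quasi-commute with their weights (yes in types A and B, never in type C), which settles (3), and (ii) the number of grouplikes occurring as weights of nontrivial skew primitives (at most two in type A, at least three in type B), which settles (1) and (2). Your ``main obstacle'' paragraph already isolates exactly this weight-count argument; the paper simply organizes the whole proof around these two dichotomies from the outset rather than working through case analyses.
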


\begin{proof} We take account of the behavior of the nontrivial 
skew primitives described in Proposition \ref{xxpro3.4}. In all 
three types, nontrivial skew primitives exist. In types A and B, 
each skew primitive quasi-commutes with its weight, while in 
type C, no nontrivial skew primitive quasi-commutes with its 
weight. Statement (3) follows.

In type A, at most two grouplikes are weights of nontrivial skew 
primitives, while in type B, at least three grouplikes are 
weights of skew primitives. Statements (1) and (2) now follow.
\end{proof}

%%%%%%%%%%%%%%%%%%%%%%%
\subsection{Locally affine Hopf algebras}
\label{xxsec3.3}

We recall the definition of the local affine property.

\begin{definition}
\label{xxdef3.7} Let $H$ be a Hopf algebra.
\begin{enumerate}
\item[(1)]
An element $f\in H$ is called {\it locally affine} if 
it is contained in a Hopf subalgebra that
is affine. 
\item[(2)]
Let $V$ be a subset of $H$. We say that $V$ is 
{\it locally affine} if every element in $V$ is locally affine.
\end{enumerate}
\end{definition}

\begin{lemma}
\label{xxlem3.8} 
Let $H$ be a Hopf algebra. 
\begin{enumerate}
\item[(1)]
Every finite set of locally affine elements of $H$ is 
contained in an affine Hopf subalgebra of $H$.
\item[(2)] 
The locally affine elements in $H$ form a Hopf subalgebra 
of $H$, and this Hopf subalgebra is a directed union of 
affine Hopf subalgebras.
\item[(3)]
If a subset $V\subseteq H$ is locally affine, and if $H$ 
is generated by $V$ as an algebra, then $H$ is locally affine.
\item[(4)]
\cite[Corollary 3.4]{Zh}
If $H$ is pointed, then it is locally affine.
\item[(5)]
Let $V$ be a subset of $H$ such that $\sum_{i\geq 0} kS^i(v)$
is finite dimensional for each $v\in V$. If $H$ is the $k$-span 
of $V$, then $H$ is locally affine.
\item[(6)]
If $S$ has finite order, then $H$ is locally affine.
As a consequence, if $H$ is commutative or cocommutative,
then it is locally affine. 
\end{enumerate}
\end{lemma}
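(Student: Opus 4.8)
The plan is to prove (1) directly, obtain (2) and (3) as formal consequences, invoke \cite[Corollary 3.4]{Zh} for (4), and reduce (6) to (5); the only substantial content lies in (1) and, above all, in (5).

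For (1) I would first record the standard fact that the subalgebra $A$ generated by a finite family $K_1,\dots,K_n$ of Hopf subalgebras of $H$ is itself a Hopf subalgebra: one has $\Delta(K_j)\subseteq K_j\otimes K_j\subseteq A\otimes A$ and $S(K_j)\subseteq K_j\subseteq A$, so since $\Delta$ is an algebra homomorphism and $S$ an antihomomorphism, $\Delta(A)\subseteq A\otimes A$ and $S(A)\subseteq A$, while $\epsilon$ restricts to $A$. Now if $f_1,\dots,f_n$ are locally affine, choose affine Hopf subalgebras $K_j\ni f_j$; the Hopf subalgebra they generate is finitely generated as an algebra (take the union of the generating sets of the $K_j$), hence affine, and it contains every $f_j$.

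Parts (2), (3), (4) then follow quickly. For (2): by (1) any finite set of locally affine elements lies in an affine Hopf subalgebra, so the set $L$ of locally affine elements is a $k$-subalgebra; and if $f\in L$ lies in an affine Hopf subalgebra $A$, then $S(f)\in A\subseteq L$, $\Delta(f)\in A\otimes A$, $\epsilon(f)\in k$, so $L$ is a Hopf subalgebra. The affine Hopf subalgebras contained in $L$ form a directed family by (1), with union $L$. For (3): every element of $H$ is an algebra expression in finitely many members of $V$, each locally affine, hence lies in an affine Hopf subalgebra by (1); so every element of $H$ is locally affine, which by (1) is exactly the statement that $H$ is locally affine in the sense of Definition \ref{xxdef0.3}. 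Part (4) is \cite[Corollary 3.4]{Zh} combined with (1).

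The heart of the matter is (5). Fix $v\in V$ and put $W:=\sum_{i\ge0}kS^i(v)$, a finite-dimensional subspace with $S(W)\subseteq W$. By the fundamental theorem of coalgebras every element of $W$ lies in a finite-dimensional subcoalgebra, so the subcoalgebra $C:=\langle W\rangle$ generated by $W$ is finite-dimensional, and one checks $C=\sum_{i\ge0}\langle S^i(v)\rangle$. The plan is to show $C$ is $S$-stable: granting the coalgebra fact that a coalgebra (anti)morphism carries the subcoalgebra generated by an element to the subcoalgebra generated by its image, one gets $S^j(C)=\sum_{i\ge0}\langle S^{i+j}(v)\rangle\subseteq C$. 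Then the subalgebra $A$ generated by $C$ is a sub-bialgebra (being generated by a subcoalgebra) that is $S$-stable (being generated by an $S$-stable set under an algebra antihomomorphism), hence an affine Hopf subalgebra containing $v$; thus $V$ is locally affine, and since $H$ is the $k$-span of $V$, part (3) yields that $H$ is locally affine. Part (6) is then immediate: if $S^N=\mathrm{id}$ then $\sum_{i\ge0}kS^i(v)=\sum_{i=0}^{N-1}kS^i(v)$ is finite-dimensional for every $v\in H$, so (5) applies with $V=H$; and commutative or cocommutative Hopf algebras satisfy $S^2=\mathrm{id}$, so they are covered. The step I expect to be delicate is precisely the assertion that $\langle W\rangle$ is $S$-stable (equivalently that $\sum_{j\ge0}S^j(C)$ stays inside $\langle W\rangle$): this is routine when the antipode is bijective, since then $S$ and $S^{-1}$ are both coalgebra antimorphisms and a bijective coalgebra (anti)morphism transports generated subcoalgebras to generated subcoalgebras, but in general it requires a careful argument carried out inside the fixed finite-dimensional subcoalgebra $\langle W\rangle$, using that every iterate $S^j(v)$ remains in the finite-dimensional space $W$.
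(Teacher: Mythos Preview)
Your treatment of parts (1)--(3) and (6) matches the paper's. For (4) the paper also cites \cite{Zh} but in addition supplies an independent proof by induction on the coradical filtration; your reference to \cite{Zh} suffices.

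The issue is in (5), and it is exactly the one you flag. You first form the $S$-stable finite-dimensional space $W=\sum_{i\ge0}kS^i(v)$ and then pass to the generated subcoalgebra $C=\langle W\rangle$, after which you need $S(C)\subseteq C$. As you note, this is not automatic when $S$ is not bijective: for a coalgebra antimorphism one gets $\langle S(w)\rangle\subseteq S(\langle w\rangle)$ (since $S(\langle w\rangle)$ is a subcoalgebra containing $S(w)$), but the reverse inclusion amounts to showing that the preimage $S^{-1}(\langle S(w)\rangle)$ is a subcoalgebra, and preimages of subcoalgebras under coalgebra maps need not be subcoalgebras. You acknowledge this but do not close the gap, so as written the proof of (5) is incomplete.

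The paper sidesteps the difficulty by reversing the order of the two operations. Given $f\in H$, it first chooses a finite-dimensional \emph{subcoalgebra} $W$ containing $f$, and then sets $X:=\sum_{i\ge0}S^i(W)$. Now each $S^i(W)$ is a subcoalgebra (being the image of a subcoalgebra under a coalgebra antimorphism), so $X$ is a subcoalgebra; it is visibly $S$-stable; and it is finite-dimensional because $W$, being finite-dimensional and $H=\operatorname{span}_k V$, lies in the span of finitely many elements $v_1,\dots,v_m\in V$, whence $X\subseteq\sum_{j=1}^m\sum_{i\ge0}kS^i(v_j)$, a finite sum of finite-dimensional spaces by hypothesis. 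Then $k\langle X\rangle$ is an affine Hopf subalgebra containing $f$. In short: take the subcoalgebra first and apply $S$-powers afterward, rather than the other way around; this makes the $S$-stability automatic and transfers the finite-dimensionality question to the place where the hypothesis on $V$ applies directly.
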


\begin{proof} (1) This follows from the observation that 
if $\Omega_1,\dots,\Omega_n$ are affine Hopf subalgebras 
of $H$, then the subalgebra of $H$ generated by 
$\bigcup_{i=1}^n \Omega_i$ is an affine Hopf subalgebra.

(2) If $f,g\in H$ are locally affine, then in view of part (1), 
$f\pm g$ and $fg$ are locally affine. Of course, the identity 
$1\in H$ is locally affine, because $1\in k$. Therefore the 
set $L$ of locally affine elements of $H$ is a subalgebra of 
$H$. Any finite subset of $L$ is contained in an affine Hopf 
subalgebra $\Omega$ of $H$ by (1), and $\Omega \subseteq L$ 
by definition of $L$. Hence, $L$ is a directed union of affine 
Hopf subalgebras of $H$. In particular, $L$ is a Hopf subalgebra.

(3) This is clear from part (2).

(4) This was proved by Zhuang \cite{Zh}, and we give a different
proof below.

We show, by induction, that $f\in C_n(H)$ is locally affine
where $\{C_n(H)\}_{n\geq 0}$ is the coradical filtration of $H$. 
Suppose $n=0$. Since $H$ is pointed, $C_0(H)=kG$ for a group
$G$. It is clear that $kG$ is locally affine. Now suppose that 
$C_{n-1}(H)$ is locally affine and let $f\in C_n(H)$ for some 
$n\geq 1$. By \cite[Theorem 5.4.1]{Mo}, $f=\sum_{g,h \in G(H)} f_{g,h}$
where $\Delta(f_{g,h})=f_{g,h}\otimes g+h\otimes f_{g,h}+w_{g,h}$
for some $w_{g,h}\in C_{n-1}\otimes C_{n-1}$. It suffices to show 
that each $f_{g,h}$ is locally affine, so assume that $f = f_{g,h}$. 
It is clear that $f$ is locally affine if and only if $x f$ is 
locally affine for some (or any) grouplike element $x$. By replacing 
$f$ by $xf$ for some grouplike $x$, we can assume that
$\Delta(f)=f\otimes 1+ g\otimes f+w$ where $g$ is 
grouplike and $w\in C_{n-1}\otimes C_{n-1}$. By the
antipode axiom, $\epsilon(f)=S(f)+g^{-1} f+ w_0$
where $w_0\in C_{n-1}^2$, or $S(f)=-g^{-1} f+v$
for some $v\in C_{n-1}^2$.  By part (2) and the induction
hypothesis, all tensor components of $w$ are contained 
in an affine Hopf subalgebra of $H$, say $\Omega$. In particular,
$v\in \Omega$. Let $B$ be the subalgebra of $H$ generated by 
$g^{\pm 1}$, $f$ and $\Omega$. Then
$B$ is an affine Hopf subalgebra of $H$. Since $f\in B$, 
$f$ is locally affine. The assertion follows by induction.

(5) Let $f\in H$ and let $W$ be a finite dimensional subcoalgebra 
of $H$ containing $f$. By hypothesis, $X:=\sum_{i\geq 0}
S^i(W)$ is finite dimensional. Then $f$ is contained in
the affine subalgebra $k\langle X\rangle$ which is a Hopf
subalgebra as $X$ is a subcoalgebra with $S(X)\subseteq X$.

(6) This is a consequence of part (5). (Recall from 
\cite[Corollary 1.5.12]{Mo} that $S^2 = \Id$ if $H$ is 
commutative or cocommutative.)
\end{proof}

\begin{proposition}
\label{xxpro3.9}
Let $H$ be a Hopf algebra and $K$ a locally affine Hopf 
subalgebra of $H$. If $H$ is generated by $K$ and 
$x\in H$ as an algebra, then $H$ is locally affine.
\end{proposition}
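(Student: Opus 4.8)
The plan is to reduce everything to one claim: \emph{$x$ is locally affine}. Granting this, the set $V := K \cup \{x\}$ generates $H$ as an algebra and consists of locally affine elements (those in $K$ by hypothesis, and $x$ by the claim), so $H$ is locally affine by Lemma~\ref{xxlem3.8}(3). Thus it suffices to produce a single affine Hopf subalgebra of $H$ containing $x$.

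To build one, I would first invoke the fundamental theorem of coalgebras \cite{Mo} to find a finite-dimensional subcoalgebra $C \subseteq H$ with $x \in C$. Since $H = K\langle x\rangle$, every element of $H$ is a $k$-linear combination of products of the form $k_0 x^{n_1} k_1 \cdots x^{n_t} k_t$ with $k_j \in K$; applying this to a basis of $C$ and to the element $S(x)$, we see that only finitely many elements $k_1,\dots,k_r$ of $K$ are needed. By Lemma~\ref{xxlem3.8}(1) there is an affine Hopf subalgebra $\Omega$ of $H$ with $k_1,\dots,k_r \in \Omega$. Let $B$ be the subalgebra of $H$ generated by $\Omega$ together with $x$; then $B$ is affine, and by construction $C \subseteq B$ and $S(x) \in B$.

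It then remains to check that $B$ is a Hopf subalgebra. Closure under $\eps$ is automatic. Since $\Delta$ is an algebra homomorphism, $\Delta(B)$ lies in the subalgebra of $H \otimes H$ generated by $\Delta(\Omega) \subseteq \Omega \otimes \Omega$ and by $\Delta(x) \in \Delta(C) \subseteq C \otimes C$, both contained in $B \otimes B$; hence $\Delta(B) \subseteq B \otimes B$. Since $S$ is an algebra anti-homomorphism, $S(B)$ lies in the subalgebra generated by $S(\Omega) \subseteq \Omega$ and $S(x) \in B$, so $S(B) \subseteq B$. Therefore $B$ is an affine Hopf subalgebra of $H$ containing $x$, and $x$ is locally affine, as needed.

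The only step that is not purely formal is arranging closure under the antipode. The obvious candidate --- an affine Hopf subalgebra $\Omega_0$ of $H$ absorbing just the $K$-coefficients needed to express $C$, with $x$ adjoined --- is automatically a subbialgebra, because $C$ is a subcoalgebra and so $\Delta(x)\in C\otimes C$; but it need not be $S$-stable, since $C$ cannot in general be enlarged to a \emph{finite-dimensional} $S$-invariant subcoalgebra (the subspace $\sum_{i\ge 0} S^i(C)$ may be infinite-dimensional --- this is precisely the hypothesis imposed in Lemma~\ref{xxlem3.8}(5)). The remedy is cheap: absorbing the single extra element $S(x)$, expressed over $K$, into $\Omega$ suffices, because $\Omega$ is already $S$-stable and $B$ is generated by $\Omega$ and $x$.
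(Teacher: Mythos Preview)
Your proof is correct and follows essentially the same approach as the paper's. Both arguments produce an affine Hopf subalgebra of the form $k\langle \Omega, x\rangle$ where $\Omega$ is an affine Hopf subalgebra of $K$ chosen large enough to absorb the finitely many $K$-coefficients appearing in $\Delta(x)$ and $S(x)$. The only cosmetic differences are that the paper treats a general element $f\in H$ directly (rather than reducing to $x$ via Lemma~\ref{xxlem3.8}(3)), and the paper simply writes $\Delta(x)$ as an element of $k\langle x, V\rangle \otimes k\langle x, V\rangle$ for a finite-dimensional $V \subseteq K$, rather than invoking the fundamental theorem of coalgebras to obtain a subcoalgebra $C$.
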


\begin{proof} Let $f\in H$. Since $H$ is generated by
$K$ and $x$, there is a finite dimensional subspace $V
\subseteq K$ such that 
\begin{enumerate}
\item[(a)]
$f, S(x)\in k\langle x, V\rangle=:A$, and 
\item[(b)]
$\Delta(x)\in A\otimes A\subseteq H\otimes H$.
\end{enumerate}
Since $V\subseteq K$ is finite dimensional, there is an affine 
Hopf subalgebra $K_0\subseteq K$ which contains
$V$. Let $H_0$ be the subalgebra of $H$ generated by 
$K_0$ and $x$. By definition, $A\subseteq H_0$ and $H_0$ is
affine. Moreover,
$f\in H_0$. By (a) and (b), $S(x)\in H_0$ and $\Delta(x)\in H_0\otimes
H_0$. Since $K_0$ is a Hopf subalgebra of $K$, it is easy to see 
that $H_0$ is a Hopf subalgebra of $H$. Therefore $H$ is locally affine.
\end{proof}

Given a Hopf algebra $H$, note that $\Ext^1_H(k,k)\cong (\fm/\fm^2)^*$ where 
$\fm=\ker \epsilon$ \cite[Lemma 3.1(a)]{GZ}. So 
$\Ext^1_H(k,k)\neq 0$ if and only if $\fm\neq \fm^2$.

\begin{proposition}
\label{xxpro3.10} 
Let $H$ be a Hopf algebra domain of GK-dimension two 
satisfying $(\natural)$. If $H$ is locally affine,
then it is pointed.
\end{proposition}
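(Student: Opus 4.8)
The plan is to reduce to the affine case by exploiting the local affineness hypothesis together with the already-established classification of affine Hopf domains of GK-dimension two satisfying $(\natural)$, namely \cite[Theorem 0.1]{GZ}. Since $H$ is locally affine, Lemma \ref{xxlem3.8}(2) expresses $H$ as a directed union $H=\bigcup_\lambda \Omega_\lambda$ of affine Hopf subalgebras. The first step is to arrange that each $\Omega_\lambda$ is itself a domain of GK-dimension two satisfying $(\natural)$: being a subalgebra of the domain $H$, each $\Omega_\lambda$ is a domain, and $\GKdim \Omega_\lambda \le \GKdim H = 2$; by passing to a cofinal subsystem we may assume $\GKdim \Omega_\lambda = 2$ for all $\lambda$ (if all $\Omega_\lambda$ had GK-dimension $\le 1$ then $H$ would have GK-dimension $\le 1$). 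The delicate point here is verifying $(\natural)$ for the $\Omega_\lambda$; I would argue that since $\fm \ne \fm^2$ in $H$ (the reformulation of $(\natural)$ recorded just before the statement), one can choose $\Omega_\lambda$ large enough to contain an element of $\fm\cap\Omega_\lambda$ not lying in $(\fm\cap\Omega_\lambda)^2$, using faithful flatness of $H$ over $\Omega_\lambda$ (pointed Hopf algebras, hence by a limiting argument these $H$, satisfy (FF) via \cite{Ra2}) to descend the non-vanishing of $\Ext^1$.

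With that reduction in place, each $\Omega_\lambda$ is one of the affine Hopf domains on the list of \cite[Theorem 0.1]{GZ}: an enveloping algebra $U(\mf g)$ with $\dim\mf g = 2$, a group algebra $kG$ with $G$ of rank two, or one of the types $A(n,q)$, $B(n,p_0,\dots,p_s,q)$, $C(n)$. In particular every $\Omega_\lambda$ is pointed. The second, and main, step is to show that a directed union of pointed Hopf subalgebras is pointed. This does not hold for arbitrary Hopf algebras, so I would use the structural rigidity coming from the short list: grouplikes of $\Omega_\lambda$ form a torsionfree abelian group of rank $\le 2$ (or, in the $U(\mf g)$ case, are trivial), and the coradical filtration behaves well under the inclusions. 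Concretely, for a directed union the coradical satisfies $C_0(H) \supseteq \bigcup_\lambda C_0(\Omega_\lambda)$, and I want equality; the reverse inclusion is what needs proof. I would show that any simple subcoalgebra of $H$ already lies in some $\Omega_\lambda$ (true, since a simple subcoalgebra is finite-dimensional, hence contained in some $\Omega_\lambda$ by directedness), and that it is then a simple subcoalgebra of that $\Omega_\lambda$ — here one must rule out that a simple subcoalgebra of $H$ meets $\Omega_\lambda$ in a proper, non-simple subcoalgebra; since $\Omega_\lambda$ is pointed, any nonzero subcoalgebra of it contains a grouplike, which forces the intersection to contain a grouplike of $H$, and a standard argument then shows the simple subcoalgebra of $H$ is itself spanned by a grouplike.

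The expected main obstacle is the descent of $(\natural)$ in the first step: it is not a priori obvious that the non-vanishing $\fm_H \ne \fm_H^2$ localizes to some affine Hopf subalgebra, because $\fm_H^2$ could conceivably absorb, in the limit, an element that is not a square in any finite stage. I would handle this either by a direct faithful-flatness argument as sketched (if $\fm_\lambda = \fm_\lambda^2$ for all $\lambda$, then tensoring up and using (FF) gives $\fm_H = \fm_H^2$, a contradiction), or, if that proves awkward, by instead first showing directly that $H$ is pointed-or-connected via the coradical argument of the second paragraph applied to an \emph{arbitrary} cofinal family of affine Hopf subalgebras (not yet known to satisfy $(\natural)$), and only afterwards invoking Proposition \ref{xxpro3.1} and the preceding results to pin down the structure — since those results only need pointedness plus the domain and GK-dimension hypotheses together with $(\natural)$ on $H$ itself, which we are given. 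This second route sidesteps the descent issue entirely, at the cost of needing the pointedness of the affine stages, which follows because \emph{every} affine Hopf subalgebra of $H$ is a domain of GK-dimension $\le 2$, and the GK-dimension-$\le 1$ ones are pointed by Lemma \ref{xxlem1.1} while the GK-dimension-$2$ ones need a separate input — here one may use that an affine Hopf domain of GK-dimension two is pointed (this is part of the general structure theory; if it is not available without $(\natural)$, the first route via descent of $(\natural)$ becomes necessary). I would present the descent route as the clean argument and flag the alternative.
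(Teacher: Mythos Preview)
Your overall strategy matches the paper's --- reduce to affine Hopf subalgebras, invoke \cite[Theorem 0.1]{GZ} to conclude they are pointed, and deduce that every simple subcoalgebra of $H$ is one-dimensional --- but you have misidentified where the difficulty lies, and this leads to a circular argument.

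The descent of $(\natural)$ to an affine Hopf subalgebra $K$ is not delicate at all: it is immediate. If $f \in \fm \setminus \fm^2$ and $f \in K$, then $f \in \fm_K = \fm \cap K$, and since $\fm_K^2 \subseteq \fm^2$ we get $f \notin \fm_K^2$. No faithful flatness is required. Your proposed justification of (FF) is in fact circular: Radford's theorem \cite{Ra2} gives freeness of a \emph{pointed} Hopf algebra over its Hopf subalgebras, so invoking it for $H$ presupposes exactly what you are trying to prove. Your ``second route'' does not rescue this, because as you yourself note, affine Hopf domains of GK-dimension two need not be pointed without $(\natural)$ (cf.\ \cite{WZZ1}).

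Once you see that descent of $(\natural)$ is trivial, the whole directed-system apparatus becomes unnecessary. The paper's proof is five lines: given a simple subcoalgebra $V \subseteq H$ and a fixed $f \in \fm \setminus \fm^2$, use Lemma~\ref{xxlem3.8}(1) to find a single affine Hopf subalgebra $K$ containing both $V$ and $f$. Then $K$ satisfies $(\natural)$ by the observation above, so $K$ is pointed by \cite[Theorem 0.1]{GZ} (or by Lemma~\ref{xxlem1.1} if $\GKdim K \le 1$), whence $\dim_k V = 1$. Your worry about whether $V$ remains simple inside $K$ is also a non-issue: $V$ is a simple coalgebra in its own right, and the coalgebra structure does not change upon restriction to $K$.
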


\begin{proof} Let $V$ be a simple subcoalgebra of $H$, and 
let $f\in \fm\setminus \fm^2$. By Lemma \ref{xxlem3.8}(1),
there is an affine Hopf subalgebra $K\subseteq H$ that
contains $V$ and $f$. Since $f\in \fm_K \setminus \fm_K^2$
where $\fm_K:=\ker \epsilon_K$, $K$ satisfies 
$(\natural)$. By \cite[Theorem 0.1]{GZ}, $K$
is pointed, whence $\dim_k V = 1$. Therefore $H$ is pointed.
\end{proof}

We finish this section with a well-known lemma.

\begin{lemma}
\label{xxlem3.11}
Let $H$ be a Hopf algebra with countable dimensional
$C_1(H)$. If $H$ is generated {\rm(}as an algebra\/{\rm)} by $C_1(H)$, then 
$H$ is a union of an ascending chain of affine 
Hopf subalgebras, each of which is finitely
generated by its grouplikes and skew primitives. 
\end{lemma}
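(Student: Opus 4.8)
The plan is to build the ascending chain by hand, once the structure of $C_1(H)$ is pinned down. I first record what I will use. The subspace $C_1(H)$ is a subcoalgebra of $H$ (since $\Delta(C_1(H))\subseteq C_0(H)\otimes C_1(H)+C_1(H)\otimes C_0(H)$), and it is stable under the antipode $S$ (as $S$ is a coalgebra anti-endomorphism, it preserves the coradical and hence each term of the coradical filtration). Writing $G:=G(H)$, the group algebra $kG$ lies inside $C_1(H)$, which is countable dimensional, so $G$ is countable; list its elements as $g_1,g_2,\dots$. I will also use that $C_1(H)$ is spanned by $kG$ together with skew primitive elements of $H$ (when $H$ is pointed, this is the Taft--Wilson theorem \cite[Theorem 5.4.1]{Mo}). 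After the normalization recalled in Notation \ref{xxsec0.1}, fix a countable family $z_1,z_2,\dots$ of skew primitive elements, say with $\Delta(z_j)=z_j\otimes 1+w_j\otimes z_j$ for suitable $w_j\in G$, such that $kG\cup\{z_j\mid j\ge 1\}$ spans $C_1(H)$.

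For $n\in\NN$, let $G_n$ be the subgroup of $G$ generated by $g_1,\dots,g_n$ and $w_1,\dots,w_n$; it is finitely generated, so $kG_n$ is a finitely generated algebra. Let $H_n$ be the subalgebra of $H$ generated by $kG_n$ together with $z_1,\dots,z_n$. Then $H_n$ is affine, is generated by finitely many grouplike and skew primitive elements, and $H_n\subseteq H_{n+1}$. To see that $H_n$ is a Hopf subalgebra, consider $V_n:=kG_n+\sum_{j=1}^n kz_j$. This is a subcoalgebra of $H$, because $\Delta(g)=g\otimes g\in V_n\otimes V_n$ for $g\in G_n$ and $\Delta(z_j)=z_j\otimes 1+w_j\otimes z_j\in V_n\otimes V_n$ for $j\le n$ (here it is essential that $w_j\in G_n$). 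Hence $H_n=k\langle V_n\rangle$ is a sub-bialgebra of $H$. Moreover $H_n$ is $S$-stable: $S(g)=g^{-1}\in kG_n$ since $G_n$ is a group, and $S(z_j)=-w_j^{-1}z_j\in H_n$; since $S$ is an anti-homomorphism, $S(H_n)\subseteq H_n$. Therefore $H_n$ is an affine Hopf subalgebra of $H$, finitely generated by grouplike and skew primitive elements.

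It remains to check that the chain exhausts $H$: we have $\bigcup_n G_n=G$ and $z_j\in H_n$ whenever $n\ge j$, so $\bigcup_n V_n=C_1(H)$; since $H$ is generated as an algebra by $C_1(H)$, it follows that $\bigcup_n H_n=H$, as desired.

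The only genuinely non-formal ingredient is the Taft--Wilson description of $C_1(H)$, together with the normalization of general skew primitives to weight-one form; the formulas for $\Delta$ and $S$ on the generators are immediate from the Hopf axioms. The step that needs a moment of attention is verifying that $V_n$ is a subcoalgebra and that $H_n$ is $S$-stable; this is precisely why $G_n$ is taken to contain the weights $w_1,\dots,w_n$ and to be closed under inversion. Beyond that I anticipate no serious obstacle.
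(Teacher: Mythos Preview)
Your proof is correct and follows essentially the same approach as the paper's: enumerate the grouplikes and a spanning set of skew primitives, then take $H_n$ to be the subalgebra generated by the first $n$ of each together with the relevant weights (and their inverses). You supply slightly more detail than the paper---explicitly checking that $V_n$ is a subcoalgebra and that $H_n$ is $S$-stable---and, like the paper, you invoke the Taft--Wilson description of $C_1(H)$, which tacitly assumes $H$ is pointed.
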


\begin{proof} Let $G=\{g_i\}_{i\in I}$ be the group 
of grouplikes in $H$ and $C=\{y_j\}_{j\in J}$ a 
set of nontrivial skew primitive elements with weights
in $G$. Then $\bigcup_{i\in I} g_i C$ spans the 
space $C_1(H)$. Since $\dim_k C_1(H)$ is countable,
so are $I$ and $J$. We list elements in $G$ and $C$ 
$$G=\{g_1,\cdots, g_n, \cdots\}\qquad {\text{and}}\qquad 
C=\{y_1,\cdots, y_n,\cdots\}.$$ 
Let $B\langle n\rangle$
be the Hopf subalgebra of $H$ generated by $y_1,\cdots, y_n$
and a finite set of grouplike elements containing 
$g_1^{\pm 1}, \cdots, g_n^{\pm 1}$ and all $x^{\pm 1}$
where $x$ appears in the expression $\Delta(y_i)$ for some 
$i=1,\cdots,n$. 
Then $B\langle n\rangle$ is an affine Hopf subalgebra of $H$
and $H=\bigcup_n B\langle n\rangle$. We may choose the $B\langle n\rangle$ so that $B\langle n\rangle \subseteq B\langle n+1\rangle$ for all $n$.
\end{proof}

%%%%%%%%%%%%%%%%%%%%%%%%%%%%%%%%%%%%%%%%%
%%%%%%%%%%%%%%%%%%%%%%%
\section{Classification results}
\label{xxsec4}

In this section we prove a couple of classification theorems 
for Hopf domains of GK-dimension two.

%%%%%%%%%%%%%%%%%%%%%%%
\subsection{Classification in the pointed case with $(\natural)$}
\label{xxsec4.1}

We start with a classification of pointed Hopf domains 
of GK-dimension two satisfying $(\natural)$.

\begin{lemma}
\label{xxlem4.1}
Let $H$ be a locally affine Hopf domain of GK-dimension two 
satisfying $(\natural)$, and assume that $\GKdim G(H) = 1$. 
Then $\dim_k H = \aleph_0$.
\end{lemma}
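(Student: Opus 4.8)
The plan is to leverage the structural results already assembled for the case $\GKdim G(H)=1$. First I would recall that, since $H$ is locally affine and satisfies $(\natural)$, Proposition \ref{xxpro3.10} tells us $H$ is pointed, so $C_0(H)=kG$ with $G=G(H)$ a subgroup of $\QQ$ containing $\ZZ$ (after the normalization of Section \ref{xxsec3.2}). Such a group $G$ is countable, so $\dim_k kG = \aleph_0$. The real content is to show $H$ itself is countable dimensional, which I would do by producing a countable generating set.

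The key step is to show that $H$ is generated as an algebra by $C_1(H)$ and that $\dim_k C_1(H)=\aleph_0$. For the generation claim, I would argue as follows: being pointed and locally affine, $H$ is a directed union of affine pointed Hopf subalgebras; by \cite[Theorem 0.1]{GZ} each of these is of type A, B, or C (or built from group algebras / enveloping algebras, but the hypothesis $\GKdim G(H)=1$ together with $A(1,1)\subseteq H$ in the interesting case pins us to the A/B/C situation via Corollary \ref{xxcor1.5}), and in every one of those presentations the algebra is generated by its grouplikes and skew primitives, i.e.\ by $C_1$. Hence $H=\bigcup$ of such subalgebras is generated by $C_1(H)$. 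For the dimension count, $C_1(H)$ is spanned by $C_0(H)=kG$ together with the grouplike translates $gy$ of nontrivial skew primitive elements $y$; by Lemma \ref{xxlem2.4}(3) and Proposition \ref{xxpro3.4} (applied inside the affine pieces), for each weight $g\in G$ the space of skew primitives of weight $g$ is finite dimensional, and there are only countably many weights, so $\dim_k C_1(H)\le \aleph_0$. Combined with $\dim_k kG=\aleph_0$ we get $\dim_k C_1(H)=\aleph_0$ exactly.

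With those two facts in hand, Lemma \ref{xxlem3.11} applies directly and exhibits $H$ as an ascending union of affine (hence finite-dimensional-in-each-degree, countable dimensional) Hopf subalgebras, so $\dim_k H=\aleph_0$. The main obstacle I anticipate is the bookkeeping in the generation-by-$C_1$ step: one must be careful that "directed union of affine type A/B/C Hopf subalgebras" is genuinely available here — this needs the local affineness to supply affine Hopf subalgebras (Lemma \ref{xxlem3.8}(1,2)) and the classification \cite[Theorem 0.1]{GZ} together with the coradical hypothesis to identify their type, and then one must check that the inclusions among these subalgebras are compatible enough (cf.\ Corollary \ref{xxcor3.6}) that $C_1$ of the union really does generate. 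Once the type-by-type structure is acknowledged, the cardinality arithmetic is routine.
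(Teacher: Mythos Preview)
Your approach is essentially the paper's: write $H$ as a directed union of affine Hopf subalgebras via local affineness, arrange that each satisfies $(\natural)$ and has $\GKdim G(K_\alpha)=1$ so that \cite[Theorem 0.1]{GZ} forces type A, B, or C (hence generation by grouplikes and skew primitives), and then bound the skew primitive spaces using Corollary \ref{xxcor3.5} together with the countability of $G(H)$. Your detours through Proposition \ref{xxpro3.10} and Lemma \ref{xxlem3.11} are harmless but unnecessary, and the parenthetical about Corollary \ref{xxcor1.5} is misplaced (that corollary handles the case where $A(1,1)$ is \emph{absent}, and is not what pins down the A/B/C type here); the paper skips all of these and concludes directly that $H=k\langle V\rangle$ for a countable-dimensional $V$.
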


\begin{proof} Obviously $H$ is infinite dimensional.

By Lemma \ref{xxlem3.8}(2), $H$ is a directed union of 
affine Hopf subalgebras $K_\alpha$, and we may assume 
that all of them have GK-dimension two. We may also 
assume that $G(K_\alpha)$ is nontrivial, whence 
$\GKdim G(K_\alpha) = 1$.  Because of $(\natural)$, there 
is some $f \in \mf{m} \setminus \mf{m}^2$, where 
$\mf{m} = \ker \epsilon$, and we may assume that 
$f \in K_\alpha$ for all $\alpha$. Then 
$f \in  \mf{m}_{K_\alpha} \setminus \mf{m}_{K_\alpha}^2$, 
where $\mf{m}_K = \ker \epsilon_K$, so that $K_\alpha$ 
satisfies $(\natural)$. Now by \cite[Theorem 0.1]{GZ}, 
each $K_\alpha$ is of type A, B, or C. In particular, 
$K_\alpha$ is generated by its grouplikes and skew 
primitives, so the same holds for $H$.

In view of Corollary \ref{xxcor3.5}, we see that, for 
any grouplike $g \in H$, the space of $(1,g)$-skew 
primitive elements in $H$ is at most $2$-dimensional. 
Since $H$ has only countably many grouplikes 
(Lemma \ref{xxlem1.1}), there is a countable dimensional 
subspace $V$ of $H$ that contains $G(H)$ and all skew 
primitives. Therefore $H = k\langle V\rangle$ has 
countable dimension.
\end{proof}

\begin{theorem}
\label{xxthm4.2}
Let $H$ be a pointed Hopf domain of GK-dimension two 
satisfying $(\natural)$. Suppose $H$ is not affine 
{\rm(}or equivalently, not noetherian{\rm)}. Then it is isomorphic 
to one of the following.
\begin{enumerate}
\item[(1)]
$kG$ where $G$ is a subgroup of ${\mathbb Q}^{2}$ 
containing ${\mathbb Z}^2$ that is not finitely
generated.
\item[(2)]
$kG$ where $G = L\rtimes_\varphi R$ for some subgroup
$L$ of 
${\mathbb Q}$ containing ${\mathbb Z}$
and some subgroup $R$ of $\ZZ_{(2)}$
containing ${\mathbb Z}$, and at least one of $L$ or $R$
is not finitely generated.
\item[(3)]
$A_{G}(e,\chi)$ where $G$ is a non-cyclic subgroup of
${\mathbb Q}$.
\item[(4)]
$C_{G}(e,\tau)$ where $G$ is a non-cyclic subgroup of
${\mathbb Q}$.
\item[(5)]
$B_G(\{p_i\},\chi)$ where $G$ is a non-cyclic subgroup of
${\mathbb Q}$.
\end{enumerate}
\end{theorem}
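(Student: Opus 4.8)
The plan is to organize the proof around the trichotomy for $\GKdim G(H)$ established in Section \ref{xxsec3}, treating the non-affine refinement of each case. Since $H$ is pointed, $C_0(H) = kG$ for $G = G(H)$, and by \cite[Section 2.1]{WZZ1} we have $\GKdim kG \in \{0,1,2\}$. When $\GKdim kG = 0$ the domain hypothesis forces $H$ connected, hence $H \cong U(\mf g)$ by Proposition \ref{xxpro3.1}(1); but every $2$-dimensional enveloping algebra is affine, so this case cannot occur under our non-affine assumption. When $\GKdim kG = 2$, Proposition \ref{xxpro3.1}(2,3) already gives $H \cong kG$ with $G$ a subgroup of $\QQ^2$ containing $\ZZ^2$, or $G = L \rtimes_\varphi R$ with $L \subseteq \QQ$, $R \subseteq \ZZ_{(2)}$; non-affineness of $H$ is equivalent to $G$ not being finitely generated, which in the first case means $G \ne \ZZ^2$ and in the second means at least one of $L, R$ is not finitely generated. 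This yields parts (1) and (2).

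The substantive case is $\GKdim kG = 1$, so by Lemma \ref{xxlem1.1}(3), $G$ is isomorphic to a subgroup of $\QQ$ containing $\ZZ$; non-affineness will force $G$ to be non-cyclic. First I would split off the easy sub-case via Corollary \ref{xxcor1.5}: if $H$ does \emph{not} contain $A(1,1)$ as a Hopf subalgebra, then $H \cong A_G(e,\chi)$ or $C_G(e,\tau)$ for such a $G$, giving parts (3) and (4). (If $G$ were cyclic, $H$ would be affine, contradiction, so $G$ is non-cyclic.) So assume $H \supseteq A(1,1)$, i.e.\ $H$ contains $x = x^1$ grouplike and a skew primitive $y \notin kG$ with $xy = yx$ and $\Delta(y) = y\otimes 1 + x\otimes y$, as in \eqref{E3.1.2}. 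The goal is to show $H \cong B_G(\{p_i\},\chi)$ for some data as in Construction \ref{xxsec2.1}.

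To carry this out, the first reduction is local affineness: since $H$ is pointed, Lemma \ref{xxlem3.8}(4) gives that $H$ is locally affine, so by Lemma \ref{xxlem3.8}(2) it is a directed union of affine Hopf subalgebras, and as in the proof of Lemma \ref{xxlem4.1} we may take each to be of type A, B, or C and to satisfy $(\natural)$ by \cite[Theorem 0.1]{GZ}. Any such subalgebra large enough to contain $x$ and $y$ must be of type B or else of type A containing a non-quasi-commuting... no: since $xy = yx$ it could a priori be type A; but the presence of a strictly larger type-B subalgebra (obtainable once we adjoin a second skew primitive) and Corollary \ref{xxcor3.6}(2) promote it to type B. The key structural input is Lemma \ref{xxlem2.4} together with Proposition \ref{xxpro3.4}: inside each affine $K_\alpha$ of type B, the nontrivial skew primitives are tightly controlled, and one extracts from the union $H$ a family $\{y_i\}_{i \in I}$ of skew primitives with weights $x_i = x^{1/p_i}$, pairwise relatively prime integers $p_i \ge 2$ with $1/p_i \in G$, satisfying the commutation and $p$-th power relations \eqref{E2.0.2}. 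One then reads off the character $\chi$ from $x^a y_i = \chi(a/p_i) y_i x^a$ and checks conditions (1)--(3) of the Construction, using that $x_i \otimes y_i$ and $y_i \otimes 1$ $q$-commute with $q = \chi(1/p_i^2)$ a primitive $p_i$-th root of unity (forced by the type-B structure of $K_\alpha$). Finally one verifies, via Lemma \ref{xxlem3.11} and the fact that $H$ is generated by $C_1(H)$, that $H$ is exhausted by the finitely generated type-B subalgebras $\Btil$ of Lemma \ref{xxlem2.2}, and hence equals $B_G(\{p_i\},\chi)$; non-cyclicity of $G$ follows since a cyclic $G$ would make $H$ affine.

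The main obstacle is the step of coherently assembling the local type-B data into a single global $B_G(\{p_i\},\chi)$: one must show that the skew primitives produced in different affine pieces $K_\alpha$ are compatible (same weights, same $\chi$ on overlaps, pairwise relative primeness of the full set of $p_i$, and the $y_i^{p_i}$ all equal), that no "extra" skew primitives appear beyond linear combinations of the $y_i$, $y$, and $1 - x^a$ (this is where Lemma \ref{xxlem2.4}(3) and \cite[Lemma 2.9(a)]{WZZ1} via Proposition \ref{xxpro3.4}(3) are essential), and that the transition maps in the directed union respect all this. Once the data $(G,\{p_i\},\chi)$ is pinned down, Proposition \ref{xxpro2.5} guarantees the resulting presentation is exactly $B_G(\{p_i\},\chi)$, and the non-cyclicity and non-finite-generation conditions in parts (1)--(5) are immediate from affineness $\Leftrightarrow$ noetherian $\Leftrightarrow$ the relevant group being finitely generated.
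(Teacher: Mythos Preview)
Your overall strategy matches the paper's: reduce via the $\GKdim G(H)$ trichotomy, use Corollary \ref{xxcor1.5} to dispose of the case $A(1,1) \not\subseteq H$, write $H$ as an increasing union of affine Hopf subalgebras $K_i$ of type A, B, or C (ruling out type C via Corollary \ref{xxcor3.6}), and then extract the type-B data from the chain. However, there is a genuine gap in your treatment of the possibility that the $K_i$ are all of type A.

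You assert that adjoining a second skew primitive ``promotes'' a type-A subalgebra to type B via Corollary \ref{xxcor3.6}(2). This does not follow: by Proposition \ref{xxpro3.4}(2), $A(n,q)$ with $q^n$ a primitive $d$-th root of unity ($d>1$) already contains nontrivial skew primitives with two distinct weights, namely $z$ with weight $x^n$ and $z^d$ with weight $x^{dn}$ (the latter commuting with $z^d$). Thus an affine Hopf subalgebra of $H$ containing $y$ together with a second skew primitive of different weight may still be of type A; Corollary \ref{xxcor3.6} only tells you that type B cannot be followed by type A, not that two weights force type B. Lemma \ref{xxlem3.2} does not help either, since it applies only when $y$ is the \emph{unique} nontrivial skew primitive modulo $kG$. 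The paper handles the all-type-A case by a separate argument: if every $K_i \cong A(n_i,q_i)$ with $q_i^{n_i}$ a primitive $d_i$-th root of unity, then Proposition \ref{xxpro3.4}(2) forces $x_{i+1}^{n_{i+1}} = x_i^{n_i}$ and $z_{i+1} \in k z_i + k(1 - x_i^{n_i})$, so that $K_{i+1} = k\langle x_{i+1}^{\pm1}, z_i\rangle$ for all $i$ and hence $H$ is generated by $G(H)\cup\{z_1\}$. Reading off a character $\chi$ from $g^{-1}z_1 g$ and invoking \cite[Lemma 2.2(c)]{WZZ2} yields a surjective Hopf map $A_G(x_1^{n_1},\chi)\to H$ between domains of GK-dimension two, hence an isomorphism, contradicting the assumption that $H$ is not in part (3). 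Only after ruling out this case can one delete finitely many $K_i$, assume all $K_i$ are of type B, and perform the coherent assembly you outline in your last two paragraphs; that assembly is indeed the technical heart of the proof and proceeds essentially as you describe.
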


\begin{proof} 
The cases when $kG(H)$ has GK-dimension zero or two are 
done by Proposition \ref{xxpro3.1}. Now assume 
$\GKdim G(H) = 1$. We also assume that $H$ is not 
isomorphic to any Hopf algebra in parts (1)--(4), and we 
will prove that it is isomorphic to one of those in part (5). 
By Corollary \ref{xxcor1.5}, it remains to consider the case
when $H$ contains a Hopf subalgebra isomorphic to $A(1,1)$.

In view of Lemma \ref{xxlem1.1}(3), $G(H)$ is isomorphic to 
a non-cyclic subgroup $G$ of $\QQ$. Write $G(H)$ in 
the form $\{ x^a \mid a \in G \}$ as in Notation \ref{xxsec0.1}. Since $H$ contains a copy of $A(1,1)$, 
there are a grouplike $x$ and a nontrivial skew primitive $y$ 
in $H$ such that
$$xy=yx, \quad {\text{and}}\quad \Delta(y)=y\otimes 1+ x\otimes y.$$
After replacing $G$ by an isomorphic subgroup of $\QQ$ if 
necessary, we may assume that $1 \in G$ and $x = x^1$.

Since $H$ is pointed, it is locally affine by Lemma \ref{xxlem3.8}(4). 
Thus, by Lemma \ref{xxlem4.1} and its proof, $H$ is the union 
of an increasing sequence of affine Hopf subalgebras
$$ K_1 \subsetneqq K_2 \subsetneqq \cdots \,, $$
each being one of type A, B, or C from Notation \ref{xxnot3.3}. 
Since the Hopf subalgebra $k\langle x^{\pm1},y \rangle \cong A(1,1)$ 
is contained in some $K_j$, Corollary \ref{xxcor3.6} implies that 
none of the $K_i$ is of type C. From the same corollary, we find 
that either all the $K_i$ are of type A or all but finitely many 
$K_i$ are of type B. Since we may delete any $K_i$ that does not 
properly contain  $k\langle x^{\pm1},y \rangle$, there is no loss 
of generality in assuming that
$$
k\langle x^{\pm1},y \rangle \subsetneqq K_1 \,.
$$
From Proposition \ref{xxpro3.4} and the details of 
\cite[Constructions 1.1, 1.2, 1.4]{GZ}, we see that in each 
$K_i$, there is at most one grouplike $g_i$ which is the weight 
of a nontrivial skew primitive that commutes with $g_i$. 
Consequently, taking also Corollary \ref{xxcor3.5} into account,
\begin{enumerate}
\item[(i)] 
The unique grouplike $g \in G(H)$ which is the weight of a 
nontrivial skew primitive that commutes with $g$ is $g=x$. 
The space of $(1,x)$-skew primitives in $H$ is $ky + k(1-x)$.
\end{enumerate}

Suppose first that all the $K_i$ are of type A. If $K_i \cong A(n_i,q_i)$ 
with $q_i=1$ or $q_i$ not a root of unity, it follows from 
Proposition \ref{xxpro3.4}(1) that $y$ is, up to a scalar, the 
only nontrivial skew primitive element modulo $kG(K_i)$ in $K_i$.
As a consequence, $y$ is the only nontrivial skew primitive 
element modulo $kG(H)$ in $H$. By Lemma \ref{xxlem3.2}, $H$ is 
either type A or type C. This yields a contradiction. 

Thus, after deleting some of the $K_i$, we may assume that 
$K_1 \cong A(n_1,q_1)$ where $q_1$ is a primitive $d_1$-th 
root of unity for some $d_1>1$. By Proposition \ref{xxpro3.4}(2), 
$K_1 = k\langle x_1^{\pm1}, z_1\rangle$ for some grouplike 
$x_1$ and some nontrivial skew primitive $z_1$ with weight 
$x_1^{n_1}$ such that $x_1^{d_1n_1} = x$ and 
$z_1^{d_1} \in ky + k(1-x)$. Since $K_1$ then contains two 
nontrivial skew primitives with different weights, so do 
all the $K_i$, and another application of the proposition 
yields $K_i \cong A(n_i,q_i)$ where $q_i$ is a primitive 
$d_i$-th root of unity for some $d_i>1$. Moreover, 
$K_i= k \langle x_i^{\pm1}, z_i\rangle$ for some grouplike 
$x_i$ and some nontrivial skew primitive $z_i$ with 
weight $x_i^{n_i}$ such that $x_i^{d_i n_i} = x$ and 
$z_i^{d_i} \in ky + k(1-x)$. Further, 
$x_{i+1}^{n_{i+1}} = x_i^{n_i}$ and 
$z_{i+1} \in kz_i + k(1-x_i^{n_i})$, from which we see 
that $K_{i+1} = k\langle x_{i+1}^{\pm1}, z_i\rangle$.

At this point, $H$ is generated by $G(H) \cup \{z_1\}$. 
Let $\chi$ be the character of $G(H)$ determined by
$$g^{-1} z_1 g=\chi(g) z_1+\tau(g) (1-x_1^{n_1}),\qquad
\forall \; g\in G:=G(H).$$
Since $\chi$ is non-trivial, one can choose $\tau(g)=0$
for all $g$ by \cite[Lemma 2.2(c)]{WZZ2}. 
Then there is a surjective Hopf algebra map $\phi:
A_G(x_1^{n_1},\chi)\to H$. But $A_G(x_1^{n_1}, \chi)$
is a domain of GK-dimension two, so $\phi$ is an 
isomorphism, contradicting one of our assumptions.

Therefore, all but finitely many $K_i$ are of type B. 
After deleting the exceptions, we may assume that all 
$K_i$ are of type B.

Each $K_i$ is now generated by grouplikes $x_i^{\pm1}$ and 
finitely many nontrivial skew primitives, say $y_{ij}$ for 
$j \in J_i$. From the details of \cite[Construction 1.2]{GZ}, 
we have positive integers $n_i$, $p_{0i}$, and $p_{ij}$ for 
$j \in J_i$ and some $q_i \in \kx$ such that for all $j,l \in J_i$,
\begin{enumerate}
\item[(ii)] 
$|J_i| \ge2$ and $p_{ij} \ge 2$.
\item[(iii)] 
$p_{0i} \mid n_i$ and $p_{0i}$ together with the $p_{ij}$ 
are pairwise relatively prime.
\item[(iv)] 
$q_i$ is a primitive $l_i$-th root of unity, where 
$l_i = m_in_i/p_{0i}$ and $m_i = \prod_{j\in J_i} p_{ij}$, 
and $q_i^{m_{ij}^2 n_i}$ is a primitive $p_{ij}$-th root of unity.
\item[(v)] 
$x_i y_{ij} x_i^{-1} = q_i^{m_{ij}} y_{ij}$, where $m_{ij} = m_i/p_{ij}$.
\item[(vi)] 
$y_{ij} y_{il} = y_{il} y_{ij}$ and $y_{ij}^{p_{ij}} = y_{il}^{p_{il}}$.
\item[(vii)] 
$y_{ij}$ has weight $x_i^{m_{ij} n_i}$, and these elements do not commute.
\item[(viii)] 
$y_{ij}^{p_{ij}}$ is a nontrivial skew primitive element with 
weight $x_i^{m_i n_i}$, and these elements commute.
\end{enumerate}
In view of (i), it follows that $x_i^{m_i n_i} = x$ and 
$y_{ij}^{p_{ij}} \in ky + k(1-x)$. In particular, 
$$
1/m_in_i \in G \qquad\text{and}\qquad x_i = x^{1/m_in_i}.
$$
Set $G_i := \ZZ (1/m_in_i)$, so that $G(K_i) = \{ x^c \mid c \in G_i \}$.

Define a character $\chi_i$ on the group $G_i(1/m_i) = \ZZ (1/m_i^2n_i)$ 
so that $\chi_i(1/m_i^2n_i) = q_i$, and observe that
$$
x^{c/m_in_i} y_{ij} x^{-c/m_in_i} = q_i^{cm_{ij}} y_{ij} 
= \chi_i(c/m_in_ip_{ij}) y_{ij} \qquad \forall\; c\in\ZZ, \; j \in J_i \,.
$$

Temporarily set $t := i+1$, and consider $j \in J_i$. Since $y_{ij}$ 
is a nontrivial skew primitive element of $K_t$ with weight 
$x_i^{m_{ij} n_i}$ and $y_{ij}$ does not commute with its weight, 
Proposition \ref{xxpro3.4}(3) implies that there is some 
$s\in J_t$ such that $x_i^{m_{ij} n_i} = x_t^{m_{ts} n_t}$ 
and $y_{ij} \in ky_{ts} + k(1-x_t^{m_{ts} n_t})$. Now 
$x_i^{m_{ij} n_i}$ quasi-commutes with $y_{ij}$ and $y_{ts}$ 
but does not commute with these elements, while it does commute 
with $1-x_t^{m_{ts} n_t}$. Since $y_{ts} \notin kG(K_t)$, it 
follows that $y_{ij} \in ky_{ts}$. After rearranging indices, 
we may thus assume that $J_i \subseteq J_t$ and
$$
x_i^{m_{ij} n_i} = x_t^{m_{tj} n_t} \qquad\text{and}
\qquad y_{ij} = \alpha_j y_{tj} \;\; \text{with} 
\;\; \alpha_j \in \kx, \qquad \forall\; i \in J_i \,.
$$
Note that $x^{1/p_{ij}} = x_i^{m_{ij} n_i} 
= x_t^{m_{tj} n_t} = x^{1/p_{tj}}$ implies $p_{ij} = p_{tj}$. 
For $j,l \in J_i$, we have 
$$
(\alpha_j y_{tj})^{p_{tj}} = y_{ij}^{p_{ij}} 
= y_{il}^{p_{il}} = (\alpha_l y_{tl})^{p_{tl}},
$$
and for $r\in J_t \setminus J_i$ we may choose $\alpha_r \in \kx$ 
such that $\alpha_r^{p_{tr}} = \alpha_j^{p_{tj}}$, so that 
$(\alpha_r y_{tr})^{p_{tr}} = (\alpha_j y_{tj})^{p_{tj}}$. 
Hence, we may replace all the generators $y_{tu}$ of $K_t$ 
by the elements $\alpha_u y_{tu}$. This means there is no 
loss of generality in assuming that
$$
x_i^{m_{ij} n_i} = x_{i+1}^{m_{i+1,j} n_{i+1}}, \qquad 
p_{ij} = p_{i+1,j} \,, \qquad\text{and}\qquad 
y_{ij} = y_{i+1,j} \qquad \forall\; j \in J_i \,.
$$

If $d_i$ denotes the product of the $p_{i+1,r}$ for 
$r\in J_t \setminus J_i$ (where an empty product equals $1$), 
then $m_{i+1} = d_im_i$ and $m_{i+1,j} = d_i m_{ij}$ for 
$j \in J_i$. Since $x_i \in G(K_{i+1}) = \langle x_{i+1}\rangle$, 
we have $x_i = x_{i+1}^{e_i}$ for some nonzero integer 
$e_i$, whence $m_{i+1} n_{i+1} = e_i m_i n_i$. For any $j \in J_i$,
$$
q_i^{m_{ij}} y_{ij} = x_i y_{ij} x_i^{-1} 
= x_{i+1}^{e_i} y_{i+1,j} x_{i+1}^{-e_i} 
= q_{i+1}^{e_i m_{i+1,j}} y_{i+1,j} = q_{i+1}^{d_i e_i m_{ij}} y_{ij} ,
$$
whence $q_i^{m_{ij}} = q_{i+1}^{d_i e_i m_{ij}}$. Since 
the GCD of $\{ m_{ij} \mid j \in J_i \}$ is $1$, it follows 
that $q_i = q_{i+1}^{d_ie_i}$. Consequently,
$$
\chi_{i+1}(1/m_i^2n_i) = \chi_{i+1}( d_ie_i/ m_{i+1}^2 n_{i+1}) 
= q_{i+1}^{d_ie_i} = q_i \,,
$$
and therefore $\chi_{i+1}$ restricted to $G_i(1/m_i)$ equals $\chi_i$.

Now $G = \bigcup_{i=1}^\infty G_i = \bigcup_{i=1}^\infty \ZZ(1/m_in_i)$, and 
$$
M := \sum_{i=1}^\infty \sum_{j\in J_i} \Znonneg (1/p_{ij}) 
= \bigcup_{i=1}^\infty \Znonneg (1/m_i),
$$
whence
$$GM =  \sum_{i=1}^\infty \sum_{j\in J_i} G(1/p_{ij}) 
= \bigcup_{i=1}^\infty \Znonneg (1/m_i^2 n_i) = \bigcup_{i=1}^\infty G_i(1/m_i).
$$
Consequently, there is a well defined character $\chi$ on 
$GM$ which restricts to $\chi_i$ on $G_i(1/m_i)$ for all $i$. The set
$$
P := \{ p_{ij} \mid i \in \Znonneg, \; j \in J_i \}
$$
is a set of pairwise relatively prime integers $\ge2$, and 
each $1/p_{ij} = m_{ij} n_i / m_in_i \in G$. Moreover, the scalar
$$
\chi(1/p_{ij}^2) = \chi_i(m_{ij}^2 n_i/m_i^2 n_i) = q_i^{m_{ij}^2 n_i}
$$
is a primitive $p_{ij}$-th root of unity.

Our data now satisfy all the conditions required to define 
the Hopf algebra $B_G(P,\chi)$ as in \S\ref{xxsec2.1}, and 
there is a surjective Hopf algebra map
$\pi: B_G(P,\chi) \rightarrow H$ sending the generators 
$x^{\pm a}$, $y_{ij}$ of $B_G(P,\chi)$ to the elements with 
the same names in $H$. Since both $B_G(P,\chi)$ and $H$ are 
domains of GK-dimension two, $\pi$ is an isomorphism. 
Therefore $H$ is isomorphic to a Hopf algebra in part (5).
\end{proof}

%%%%%%%%%%%%%%%%%%%%%%%
\subsection{Removing the ``pointed'' hypothesis}
\label{xxsec4.2}
Our next goal is to prove Theorem \ref{xxthm4.2} 
without assuming that $H$ is pointed. By Proposition
\ref{xxpro3.10}, it suffices to show $H$ is locally affine.

\begin{lemma}
\label{xxlem4.3}
Let $H$ be a Hopf domain of GK-dimension two satisfying 
\nat. If $H$ is not commutative, then there 
is a quotient Hopf algebra $K:=H/I$ that is a commutative 
domain of GK-dimension one. Furthermore, $K$ is one of 
the Hopf algebras listed in Lemma {\rm\ref{xxlem1.1}}.
\end{lemma}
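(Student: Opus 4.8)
The plan is to produce the desired commutative quotient by killing an appropriate ideal built from a skew primitive element together with the commutator ideal. Since $H$ is a noncommutative Hopf domain of GK-dimension two satisfying $(\natural)$, I first want to locate a nontrivial skew primitive element. Condition $(\natural)$ says $\fm \neq \fm^2$, and by standard coradical-filtration arguments (as used in the proof of Lemma~\ref{xxlem3.8}(4) via \cite[Theorem~5.4.1]{Mo}), a nonzero class in $\fm/\fm^2$ can be represented, after translating by a grouplike, by an element $f$ with $\Delta(f) = f\otimes 1 + g\otimes f$ for some grouplike $g$; that is, $f$ is a $(1,g)$-skew primitive element not lying in the coradical. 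First I would fix such an $f$ and the grouplike $g = \wgt(f)$.

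Next I would consider the two-sided ideal $I$ generated by all commutators $[a,b] = ab-ba$ for $a,b\in H$ (the commutator ideal), possibly enlarged so that the quotient picks up a suitable relation on $f$. The commutator ideal is a Hopf ideal: it is clearly a two-sided ideal, it is a coideal because $\Delta([a,b])\in I\otimes H + H\otimes I$ (expand $\Delta(ab)-\Delta(ba)$ and use that $\Delta$ is an algebra map), it lies in $\ker\eps$ since $\eps$ is an algebra map to a commutative ring, and it is stable under the antipode since $S$ is an anti-homomorphism. Hence $\Hbar := H/I$ is a commutative Hopf algebra, and it is nonzero because $\eps$ factors through it. The key point is then to control $\Hbar$: I claim it is a domain of GK-dimension exactly one. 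It has GK-dimension at most two (as a quotient of $H$), and it is nonzero, so $\GKdim \Hbar \in \{0,1,2\}$. If $\GKdim \Hbar = 0$ then $\Hbar = k$, meaning $I = \ker\eps$; but then every element of $H$ is congruent to a scalar modulo $I$, forcing $\fm \subseteq I$, and one needs to rule this out. If $\GKdim\Hbar = 2$, then since $\Hbar$ is a commutative Hopf domain of GK-dimension two it is still noncommutative-obstruction-free but $H$ itself would be... — here instead I would argue that a commutative quotient of GK-dimension two would force, via \cite[Lemma~1.6]{WZZ1} or the structure theory, $H$ to be too close to commutative; more robustly, I expect the intended argument passes to a suitable \emph{further} quotient so as to land exactly in GK-dimension one. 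Concretely: take the image $\bar f$ of $f$ in $\Hbar$; if $\bar f = 0$ one shows $\Hbar$ already has GK-dimension one, and if $\bar f\neq 0$ it is a nontrivial skew primitive in the commutative Hopf domain $\Hbar$, and then quotienting $\Hbar$ by the Hopf ideal generated by $\bar f$ (or by a power of $\bar g - 1$) produces the GK-dimension-one quotient $K$.

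The main obstacle I anticipate is pinning down \emph{why the resulting commutative quotient has GK-dimension exactly one} rather than zero or two — i.e., showing the ideal $I$ is neither too small (quotient still of dimension two, hence not yet forcing the conclusion we want) nor too large (quotient collapses to $k$). The cleanest route is probably: because $H$ is noncommutative, $I\neq 0$, so $\GKdim\Hbar \le 1$ once one knows that a nonzero Hopf ideal in a GK-dimension-two Hopf domain drops the GK-dimension by at least one (this uses that $H$, being a domain with a nonzero normal-ish element coming from a commutator relation, has the ideal containing a regular element, and GK-dimension of $H/(\text{regular element})$ is $\le \GKdim H - 1$ in this setting); and $\GKdim\Hbar \ge 1$ because $\Hbar$ surjects onto the abelianization, which contains the group algebra of the nontrivial grouplike $g$ (of infinite order, since $H$ is a domain), giving GK-dimension at least one. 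Once $\GKdim K = 1$ is established, the final sentence is immediate: by Lemma~\ref{xxlem1.1}, any Hopf domain of GK-dimension one — in particular the commutative domain $K$ — is isomorphic to one of $U(\mathfrak g)$ with $\dim\mathfrak g = 1$, $k\ZZ$, or $kG$ for a noncyclic rank-one $G\subseteq\QQ$, so $K$ is on that list.
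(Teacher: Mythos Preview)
Your approach has genuine gaps, and it diverges substantially from the paper's argument.

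The most serious problem is your opening move: you attempt to extract a skew primitive element from the hypothesis $(\natural)$ via ``standard coradical-filtration arguments'' and \cite[Theorem~5.4.1]{Mo}. That result applies only to \emph{pointed} Hopf algebras, and Lemma~\ref{xxlem4.3} makes no such assumption --- indeed, it is invoked precisely in \S\ref{xxsec4.2} to \emph{remove} the pointed hypothesis. Without pointedness, there is no guarantee that an element of $\fm\setminus\fm^2$ is related to a skew primitive, so both your construction of $f$ and your lower bound $\GKdim \Hbar \ge 1$ (which relies on the image of the grouplike $g=\wgt(f)$) collapse. A second gap: even granting that the commutator ideal is a Hopf ideal, you give no argument that $H/I$ is a \emph{domain}; abelianizations of noncommutative domains can easily acquire zero-divisors. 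Finally, your upper bound $\GKdim H/I \le 1$ leans on a vague ``normal-ish element'' from a commutator relation, but commutators are not normal in general, and the drop in GK-dimension does not follow formally from regularity alone.

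The paper's route sidesteps all of this by taking $I=\bigcap_{i\ge1}\fm^i$ rather than the commutator ideal. This $I$ is a Hopf ideal by \cite[Lemma~4.7]{LW}. The key structural input is \cite[Proposition~3.4]{GZ}: the associated graded $\gr H=\bigoplus\fm^i/\fm^{i+1}$ is an enveloping algebra $U(\mathfrak{g})$ with $\mathfrak{g}$ generated in degree~1 and $\dim\mathfrak{g}_1=e(H)\le 2$, forcing $\mathfrak{g}$ to be abelian and $\gr H$ commutative. Then \cite[Lemma~3.5]{GZ} gives $H/I$ commutative, and since $\gr(H/I)$ is an enveloping algebra (hence a domain), $H/I$ is a domain. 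The bounds $1\le\GKdim H/I\le 1$ come from $e(H/I)>0$ (because $\fm>\fm^2$ survives in the quotient) and from $I\ne0$ (because $H$ is noncommutative), respectively. This argument never needs a skew primitive or any pointedness assumption.
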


\begin{proof} Let $\fm=\ker \epsilon$ and let $I
=\bigcap_{i\geq 1} \fm^i$. By \cite[Lemma 4.7]{LW}, 
$I$ is a Hopf ideal. 

Let $e(H)$ be the dimension of $\Ext^1_H(k,k)$ and 
let $\gr H= \bigoplus_{i=0}^{\infty} \fm^i/\fm^{i+1}$. 
By \cite[Proposition 3.4(b)]{GZ}, $e(H)\leq \GKdim H=2$. 
Hence $e(H)=1$ or $2$. By \cite[Proposition 3.4(a)]{GZ}, 
$\gr H\cong U({\mathfrak g})$ where ${\mathfrak g}$ is a
graded Lie algebra generated in degree 1 and 
$\dim_k \gfrak_1 = e(H)$. If $e(H)=1$,
then $\gr H=k[x]$ which is commutative. If $e(H)=2$, 
then ${\mathfrak g}_1={\mathfrak g}$ by 
\cite[Proposition 3.4(b)]{GZ}. So ${\mathfrak g}$ is 
abelian and again $\gr H\cong U({\mathfrak g})$ is commutative. 

By \cite[Lemma 3.5]{GZ}, $H/I$ is commutative. Since $H$ 
is a domain of GK-dimension two and $H$ is not commutative,
$I\neq 0$ and $H/I$ has GK-dimension at most one. On the 
other hand, $\Ext^1_H(k,k) \ne 0$ implies $\mfrak > \mfrak^2$, 
and consequently $\Ext^1_{H/I}(k,k) \ne 0$. Another 
application of \cite[Proposition 3.4(b)]{GZ} yields 
$\GKdim H/I \ge e(H/I) > 0$, and thus $\GKdim H/I = 1$.
Moreover,
$\gr (H/I)$ is an enveloping algebra and thus a domain. 
Therefore $H/I$ is a 
commutative domain of GK-dimension one. The assertion 
follows. 
\end{proof}

\begin{theorem}
\label{xxthm4.4}
Let $H$ be a Hopf domain of GK-dimension two 
satisfying $(\natural)$. Then $H$ is locally affine.
\end{theorem}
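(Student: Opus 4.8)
The plan is to dispatch the commutative case at once, reduce the non-commutative case via Lemma~\ref{xxlem4.3} to a commutative Hopf quotient $K=H/I$ of GK-dimension one, and then bootstrap: exhibit inside $H$ a locally affine Hopf subalgebra over which $H$ is generated by one further element, and invoke Proposition~\ref{xxpro3.9}. So: if $H$ is commutative then $S^2=\id$, whence $H$ is locally affine by Lemma~\ref{xxlem3.8}(6); assume henceforth $H$ is not commutative. By Lemma~\ref{xxlem4.3}, $I:=\bigcap_{i\ge1}\mathfrak m^i$ (with $\mathfrak m=\ker\epsilon$) is a nonzero Hopf ideal and $K:=H/I$ is a commutative Hopf domain of GK-dimension one; by Lemma~\ref{xxlem1.1}, $K\cong U(\mathfrak g)$ with $\dim\mathfrak g=1$, or $K\cong kG$ with $0\ne G\le\QQ$. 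In either case $K$ is commutative --- hence locally affine (Lemma~\ref{xxlem3.8}(6)) and pointed --- and its cotangent space $\mathfrak m_K/\mathfrak m_K^2$ (where $\mathfrak m_K=\ker\epsilon_K$) is one-dimensional, so $\gr K\cong k[t]$. Since $I\subseteq\mathfrak m^i$ for all $i$ we have $\mathfrak m^i/\mathfrak m^{i+1}=\mathfrak m_K^i/\mathfrak m_K^{i+1}$, hence $\gr H=\gr K\cong k[t]$; comparing with the description of $\gr H$ as $U(\mathfrak g)$ with $\mathfrak g$ generated in degree one by $e(H):=\dim_k\Ext^1_H(k,k)$ elements (proof of Lemma~\ref{xxlem4.3}) forces $e(H)=1$, so that $\mathfrak m=kf+\mathfrak m^2$ --- and hence $\mathfrak m^i=(f)^i+I$ --- for any $f\in\mathfrak m\setminus\mathfrak m^2$, with $(f)$ the two-sided ideal generated by $f$.

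The main step is then to locate a locally affine Hopf subalgebra $N\subseteq H$ and an element $z\in H$ with $H=k\langle N,z\rangle$, after which Proposition~\ref{xxpro3.9} shows $H$ is locally affine. To build $N$, I would lift the grouplike (resp.\ primitive) structure of $K$ back to $H$ --- the possible obstructions being restricted because $\gr H\cong k[t]$ carries essentially only one line of primitives --- producing a commutative (hence, by Lemma~\ref{xxlem3.8}(6), locally affine) Hopf subalgebra $N\subseteq H$ of GK-dimension at most one, of group-algebra type, or of type $k[x^{\pm1}]$, or of type $U(\mathfrak g)$ with $\dim\mathfrak g=1$. Then, using $\mathfrak m=kf+\mathfrak m^2$ and $\mathfrak m^i=(f)^i+I$, one checks that $H$ is generated over $N$ by finitely many elements, and in the relevant cases by a single one --- that is, $H$ is essentially a Hopf Ore extension of $N$ --- so $H=k\langle N,z\rangle$. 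When infinitely many skew-primitive generators are genuinely present --- the type-$B$ situation --- one instead presents $H$ as an ascending union of affine Hopf subalgebras, in the spirit of Proposition~\ref{xxpro2.3}, and concludes directly; the skew-primitive analysis of Propositions~\ref{xxpro3.4} and~\ref{xxpro2.5} and Lemma~\ref{xxlem2.4} is what distinguishes these cases. (An alternative is to prove $H$ pointed outright --- for which it suffices that $C_0(H)\cap I=0$, since then $\pi\colon H\to K$ embeds the coradical $C_0(H)$ as a cosemisimple, hence pointed, subcoalgebra of $K$ --- and then quote Lemma~\ref{xxlem3.8}(4).)

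The step I expect to be the main obstacle is the control of $I=\bigcap\mathfrak m^i$ together with the construction of $N$: since $H$ is not assumed noetherian, $I$ can be unwieldy, and one cannot appeal to pointedness of $H$, that being a consequence --- via Proposition~\ref{xxpro3.10} --- of the statement under proof. Turning the lift of the grouplike/primitive structure of $K$ into an honest Hopf subalgebra $N\subseteq H$, and verifying that $H$ is generated over $N$ by one further element outside the type-$B$ case, is precisely where the short list for $K$ in Lemma~\ref{xxlem1.1} and the smallness $\gr H\cong k[t]$ are needed. Everything downstream of such a presentation is formal via Proposition~\ref{xxpro3.9}, and the directed-union description disposes of the remaining case.
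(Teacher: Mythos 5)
Your opening moves match the paper: the commutative case via $S^2=\id$ and Lemma \ref{xxlem3.8}(6), and the reduction of the non-commutative case via Lemma \ref{xxlem4.3} to a commutative Hopf quotient $K=H/I$ of GK-dimension one, classified by Lemma \ref{xxlem1.1}. The observation that $\gr H\cong\gr K$ and hence $e(H)=1$ is also fine. But everything after that --- which is where the actual content of the theorem lies --- is a sketch whose stated mechanism does not work. You propose to ``lift the grouplike (resp.\ primitive) structure of $K$ back to $H$'' to produce a locally affine Hopf subalgebra $N$, and then to show $H=k\langle N,z\rangle$ so that Proposition \ref{xxpro3.9} applies. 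There is no reason a grouplike of $H/I$ lifts to a grouplike of $H$, and no argument is offered; the paper never attempts such a lift. What it does instead is use $\pi\colon H\to K$ to make $H$ a left and a right $K$-comodule algebra via $\lambda=(\pi\otimes\id)\Delta$ and $\rho=(\id\otimes\pi)\Delta$, obtaining (when $K=kG$) a bigrading $H=\bigoplus_{a,b\in G}{}_aH_b$, or (when $K=k[t]$) two commuting locally nilpotent derivations, and then argues on the homogeneous components. Moreover, the claim that ``$H$ is generated over $N$ by finitely many elements, and in the relevant cases by a single one'' is false in general: when $K=kG$ with $G$ a non-finitely-generated subgroup of $\QQ$, $H$ is strongly $G$-graded over $H_0$ and is not finitely generated over it. The paper's conclusion in that case does not come from Proposition \ref{xxpro3.9} at all; it comes either from showing that homogeneous units $h_a$ generating $H_a$ over $H_0$ are grouplike, whence $S^2=\id$ on $H$ (Case 1a), or from showing $\dim_k{}_aH_b\le1$ with $S^2({}_aH_b)\subseteq{}_aH_b$ and invoking Lemma \ref{xxlem3.8}(5) (Case 1b). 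Only the $K=k[t]$ case ends with the Ore-extension presentation $H=H_0[x;\partial]$ and Proposition \ref{xxpro3.9}, as you envision.

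The fallback you offer for the remaining case --- ``when infinitely many skew-primitive generators are genuinely present, present $H$ as an ascending union of affine Hopf subalgebras in the spirit of Proposition \ref{xxpro2.3}'' --- is circular. To identify skew primitives generating $H$, or to recognize $H$ as being ``of type B,'' you need to know that $H$ is pointed and generated by grouplikes and skew primitives; as you yourself note, pointedness is a \emph{consequence} of this theorem via Proposition \ref{xxpro3.10}, not an available hypothesis. (The same objection applies to the parenthetical alternative of proving $C_0(H)\cap I=0$: no argument is given, and controlling the coradical is precisely the difficulty.) So the proposal correctly frames the problem and handles the easy cases, but the central step --- extracting local affineness from the comodule-algebra structure over $K$ without any a priori knowledge of the coalgebra structure of $H$ --- is missing, and the route sketched for it would not succeed.
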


\begin{proof}
If $H$ is commutative, the assertion follows by Lemma 
\ref{xxlem3.8}(6). From now on, assume that $H$ is not commutative.
By Lemma \ref{xxlem4.3}, there is a Hopf ideal $I$ such that 
$K:=H/I$ is a Hopf domain of GK-dimension one. By Lemma 
{\rm\ref{xxlem1.1}}, we are in one of the following two cases:

{\bf Case 1:} $K= kG$ where $G$ is a nonzero subgroup of $\QQ$.

{\bf Case 2:} $K = k[t]$ is a polynomial ring, with 
$\Delta(t) = t\otimes 1 + 1\otimes t$.

The following analysis is similar to the one in \cite{GZ}. In 
fact, the ideas and arguments are copied from \cite{GZ}. 
Let $\pi: H \rightarrow K$ be the quotient map, set
$$\rho := (\id\otimes\, \pi)\Delta : H\to H\otimes K 
\qquad\text{and}\qquad 
\lambda :=(\pi\otimes \id)\Delta : H\to K\otimes H,$$
and note that $H$ becomes a right (resp., left) comodule 
algebra over $K$ via $\rho$ (resp., $\lambda$), see 
\cite[Subsection 4.1]{GZ}.

{\bf Case 1.} Write $K = kG$ in the form 
$\bigoplus_{a\in G} k x^a$. For $a\in G$, let 
$$H_a := \{h\in H\mid \rho(h)=h\otimes x^a\} 
\qquad\text{and}\qquad 
{}_{a} H :=\{h\in H \mid \lambda(h)=x^a\otimes h\}.$$
Then $H$ is a $G$-graded algebra in two ways:
$$H=\bigoplus_{a\in G} H_a =\bigoplus_{a\in G} {_a H},$$
where the first decomposition is called the $\rho$-grading 
and the second is called the $\lambda$-grading. 
Let $\pi^r_{a}$ and $\pi^{l}_{a}$ be the respective
projections from $H$ onto $H_a$ and ${_a H}$ in the 
above decompositions. Then by $G$-graded versions of 
\cite[(E5.0.1) and (E5.0.2)]{GZ}, we have 
\begin{equation}
\label{E4.4.1}\tag{4.4.1}
\pi^r_a \pi^l_b=\pi^l_b \pi^r_a, \quad \forall \; a,b\in G
\end{equation}
and, writing ${_a H _b}={_a H}\cap {H_b}$ for all $a,b\in G$,
we have 
\begin{equation}
\label{E4.4.2}\tag{4.4.2}
H_b=\bigoplus_{a\in G} {_a H_b},\quad {\text{and}} \quad 
{_a H}=\bigoplus_{b\in G} {_a H_b}.
\end{equation}
In particular, these give $G$-gradings for the algebras 
$H_0$ and $_0H$, and a $(G\times G)$-grading 
$\bigoplus_{a,b \in G} {}_aH_b$ for $H$.

By the proof of \cite[Lemmas 5.2 and 5.3]{GZ}, 
$H_a \cap {}_aH \ne 0$ for each $a\in G$, and $H$ is 
strongly $G$-graded with respect to both the $\rho$-grading
and the $\lambda$-grading. Then, $G$-graded versions of 
\cite[Lemma 5.4(a)(b)]{GZ} imply that 
$\dim_k H_0 = \infty$ and $\GKdim H \ge \GKdim H_0 +1$. 
Since $H_0$ is a domain, it cannot be algebraic over $k$, 
and therefore $\GKdim H_0 = 1$.

{\bf Case 1a.} Suppose that $H_0={_0 H}$. By \cite[Lemma 4.3(c)]{GZ},
$H_0$ is a Hopf subalgebra of $H$. By Lemma \ref{xxlem1.1},
$H_0$ is either $k[t]$ or $kG'$ where $G'$ is a 
torsionfree abelian group of rank one. Hence, $H_0$ is a 
Bezout domain (see the proof of \cite[Lemma 6.2]{GZ}). Thus 
each $H_a$ is a free $H_0$-module of rank one, say 
$H_a = h_a H_0$. Since $H$ is strongly $G$-graded, $h_a$ 
must be invertible. In particular, $\eps(h_a) \ne 0$, so we 
may replace $h_a$ by $\eps(h_a)^{-1} h_a$ and thus assume 
that $\eps(h_a) = 1$.

We claim that each $h_a$ is grouplike. By a $G$-graded 
version of \cite[Lemma 5.1(b)]{GZ}, 
$\Delta(h_a)\in H_a\otimes H_a$, and so $\Delta(h_a) =
(h_a\otimes h_a)w$ for some $w\in H_0\otimes H_0$. Since
$h_a$ is invertible, so is $w$, and 
$(\eps\otimes\id)(w) = (\id\otimes\, \eps)(w) = 1$ by 
the counit axiom applied to $h_a$. By
\cite[Lemma 4.4(a)]{GZ}, $w$ is a homogeneous invertible
element of $H_0\otimes H_0$. If $H_0=k[t]$, then 
$w=c1\otimes 1$ for some $c\in \kx$,
while if $H_0= kG'$, then $w= c g \otimes g'$ for some 
$c\in \kx$ and $g,g' \in G'$. In either case, it follows 
from the equations 
$(\eps\otimes\id)(w) = (\id\otimes\, \eps)(w) = 1$ that 
$w= 1\otimes 1$. Therefore $h_a$ is grouplike, as claimed.

Now $S^2(h_a) = h_a$ for all $a\in G$. Since $H_0$
is commutative, $S^2$ is the identity on $H_0$. As
$H$ is generated by $H_0$ and the $h_a$, we find that 
$S^2$  is the identity on $H$. Therefore by 
Lemma \ref{xxlem3.8}(6), $H$ is locally affine.

{\bf Case 1b.} Suppose that $H_0\neq {_0 H}$. Either 
$H_0 \nsubseteq {}_0H$ or ${}_0H \nsubseteq H_0$, say 
$H_0 \nsubseteq {}_0H$. Then $_aH_0 \ne 0$ for at least 
one nonzero $a\in G$. By a $G$-graded version of 
\cite[Lemma 5.4(c)]{GZ}, $\dim_k {}_aH_0 \le 1$ for 
all $a \in G$. For any $a,b \in G$, multiplication by 
a nonzero element of $_{-b}H_{-b}$ embeds $_aH_b$ in 
$_{a-b}H_0$, whence $\dim_k {_a H_b}\leq 1$. 
By a $G$-graded version of \cite[Lemma 5.1(d)]{GZ}, 
$S^2({_a H_b})\subseteq {_a H_b}$ for all $a,b\in G$. 
This implies that $\sum_{i\geq 0}
S^i(_aH_b)$ has dimension at most two. Since $H$ is 
spanned by the $_aH_b$, we conclude by Lemma 
\ref{xxlem3.8}(5) that $H$ is locally affine. 

{\bf Case 2.} By \cite[Lemma 8.1]{GZ}, there are two 
commuting locally nilpotent derivations $\delta_r$ 
and $\delta_l$ on $H$ such that
$$\rho(h) = \sum_{n=0}^\infty \frac1{n!} \delta_r^n(h) 
\otimes t^n \qquad\text{and}\qquad 
\lambda(h) = \sum_{n=0}^\infty \frac1{n!} t^n 
\otimes \delta_l^n(h), \qquad \forall\; h\in H.$$
In particular, $H_0=\ker \delta_r$ and ${_0 H}=\ker \delta_l$. 
Following the proofs of \cite[Lemmas 9.2, 9.3]{GZ}, 
we find that $H_0$ and $_0H$ have GK-dimension one.

We claim that $H_0={_0 H}$. Suppose not, say, 
$H_0\not\subseteq {_0 H}=\ker \delta_l$. Since $\delta_l$ 
commutes with $\delta_r$, $H_0$ is $\delta_l$-invariant. 
Thus, $\delta_l$ restricts to a nonzero locally nilpotent 
derivation on $H_0$, denoted by $\delta$. Let 
$H_{00}=\ker \delta$, and choose $u \in H_0 \setminus H_{00}$. 
Then $H_{00} [u]$ is a polynomial subalgebra of $H_0$ 
(by the argument of \cite[Lemma 9.2]{GZ}), which implies that 
$\GKdim H_{00}=0$. Since $k$ is algebraically closed, 
$H_{00}=k$. Since $\ker \delta=k$, there is an element 
$u\in H_0\setminus H_{00}$ such that $\delta(u)=1$.
This implies
$\lambda (u)=1\otimes u+t\otimes 1$, whereas $\rho(u)=
u\otimes 1$ as $u\in H_0$. Set $y=\pi(u)\in K$ and 
compute $\Delta(y)$ in
the following two ways:
$$\begin{aligned}
\Delta(y)&=(\pi\otimes \pi) \Delta(u)=(\pi\otimes \id) \rho(u)=y\otimes 1,\\
\Delta(y)&=(\pi\otimes \pi) \Delta(u)=(\id\otimes\, \pi) \lambda(u)
=1\otimes y+t\otimes 1.
\end{aligned}
$$
The counit axioms then yield $y = \epsilon(y)$ and $y= \epsilon(y)+t$, giving a contradiction. Therefore
we have proved that  $H_0= {_0 H}$.

Since $H_0={_0 H}$,  by \cite[Lemma 4.3(c)]{GZ}, $H_0$ is a Hopf 
subalgebra of $H$. By \cite[Theorem 8.3(b)]{GZ}, $H$ has 
the form $H=H_0[x; \partial]$, 
which is generated by $H_0$ and $x$. Since $H_0$ is 
commutative, $H_0$ is locally affine by 
Lemma \ref{xxlem3.8}(6). So $H$ is locally affine by 
Proposition \ref{xxpro3.9}. 

Combining Cases  1 and 2, we have that $H$ is locally affine.
\end{proof}

Now we are ready to prove Theorem \ref{xxthm0.1}.

\begin{proof}[Proof of Theorem {\rm\ref{xxthm0.1}}]
By Theorem \ref{xxthm4.4}, $H$ is locally affine.
By Proposition \ref{xxpro3.10}, $H$ is pointed.
Therefore the assertion follows from Theorem 
\ref{xxthm4.2}.
\end{proof}

We also have a slight modification of Theorem 
\ref{xxthm0.1}. 

\begin{proposition}
\label{xxpro4.5} 
Let $H$ be a Hopf domain of GK-dimension two.
Suppose $H$ is a union of an ascending chain
of Hopf subalgebras $\{K_i\}_{i=1}^{\infty}$ 
such that all $K_i$ satisfy
$(\natural)$. Then $H$ satisfies $(\natural)$.
As a consequence, $H$ is isomorphic to 
one of Hopf algebras in Theorem 
{\rm{\ref{xxthm0.1}}}.
\end{proposition}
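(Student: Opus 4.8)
The plan is to show first that $H$ itself satisfies $(\natural)$ and then to apply Theorem~\ref{xxthm0.1}. As $(\natural)$ is the condition $\Ext^1_H(k,k)\neq0$, i.e.\ $\mathfrak m\neq\mathfrak m^2$ for $\mathfrak m=\ker\epsilon$, it suffices to produce a nonzero $\epsilon$-derivation $H\to k$; in practice I would exhibit a Hopf algebra surjection from $H$ onto a Hopf algebra that manifestly satisfies $(\natural)$, since a nonzero $\epsilon$-derivation pulls back along any surjection. A preliminary reduction: if $H$ is affine then a finite generating set lies in some $K_{i_0}$, so $H=K_{i_0}$ and we are done; otherwise, after discarding repetitions and the finitely many terms of GK-dimension $<2$ (the GK-dimension of a directed union being the supremum of those of the terms), I may assume every $K_i$ is a Hopf domain of GK-dimension two satisfying $(\natural)$. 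By Theorem~\ref{xxthm0.1} each $K_i$ is then on the list, in particular pointed; since a simple subcoalgebra of $H$ is finite dimensional, hence lies in some $K_i$, hence is $1$-dimensional, $H$ is pointed.

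Next I would split on $\GKdim kG(H)=\sup_i\GKdim kG(K_i)\in\{0,1,2\}$. If this is $2$, then $\GKdim C_0(H)=\GKdim H$, so $H=kG(H)$ by \cite[Lemma~1.6]{WZZ1}, the group algebra of a torsion-free abelian group of rank $2$, for which $\Ext^1_H(k,k)\cong\Hom_\ZZ(G(H),k)\neq0$. If it is $0$, then $H$ is connected, so $H\cong U(\mathfrak g)$ with $\dim\mathfrak g=2$ by \cite[Theorem~1.9]{WZZ1}, and $\Ext^1_H(k,k)\cong\mathfrak g^*\neq0$.

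The substantial case is $\GKdim kG(H)=1$, which I would treat with the methods of the proof of Theorem~\ref{xxthm4.2}. Since each $K_i$ has GK-dimension $2$ while $\GKdim kG(K_i)\le1$, each $K_i$ must be of type $A_G(e,\chi)$, $C_G(e,\tau)$, or $B_G(\{p_j\},\chi)$: a group algebra of GK-dimension $2$ would force $\GKdim kG(H)=2$, one of GK-dimension $1$ is excluded by the reduction, and no list member properly contains a copy of $U(\mathfrak g)$ with $\dim\mathfrak g=2$, such a $U(\mathfrak g)$ having a $2$-dimensional space of primitive elements whereas the other list members have at most a $1$-dimensional one. Each of $A_G$, $C_G$, $B_G$ is a directed union of affine Hopf domains of the corresponding type A, B, or C (Examples~\ref{xxex1.2}, \ref{xxex1.3} and Proposition~\ref{xxpro2.3}), so $H$ is a directed union of affine Hopf domains of types A, B, C, and Corollary~\ref{xxcor3.6}, applied along the union, leaves three possibilities: all terms of type C; all of type A; or all but finitely many of type B. In the type-C case each term $L$ has a Hopf surjection onto a polynomial Hopf algebra $k[t_L]$ with $t_L$ primitive (the quotient by the ideal generated by the $g-1$, $g\in G(L)$), and Proposition~\ref{xxpro3.4}(4) makes these surjections compatible along the union, assembling into a Hopf surjection $H\twoheadrightarrow k[t]$ with $t$ primitive. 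In the type-B case, and in the type-A case when $H$ is noncommutative, Proposition~\ref{xxpro3.4}(1)--(3) shows that each skew-primitive generator of a noncommutative term is a scalar multiple of a skew-primitive generator of any larger term --- it quasi-commutes with, but does not commute with, its weight --- so the Hopf surjections $L\twoheadrightarrow kG(L)$ killing all skew-primitive generators are compatible and give a Hopf surjection $H\twoheadrightarrow kG(H)$ onto a group algebra of rank $1$; in both situations the target satisfies $(\natural)$. Finally, in the type-A case when $H$ is commutative, $\dim_k\mathfrak m/\mathfrak m^2$ equals $\GKdim H=2$ by reducedness of commutative Hopf algebras in characteristic zero (Cartier's theorem), so $(\natural)$ holds here too.

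I expect the main obstacle to be exactly the compatibility of these Hopf surjections along the union in the case $\GKdim kG(H)=1$: one must control precisely how the skew-primitive generators of a term sit inside the larger terms, which is the computation carried out in the proof of Theorem~\ref{xxthm4.2} and rests on the explicit description of skew primitives in Proposition~\ref{xxpro3.4}. The commutative sub-case is a minor but logically separate point, needing the characteristic-zero smoothness input rather than the explicit structure theory. Once $(\natural)$ is established, the concluding assertion is immediate from Theorem~\ref{xxthm0.1}.
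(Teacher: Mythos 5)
Your overall strategy is essentially the paper's: establish pointedness of $H$ from Theorem \ref{xxthm0.1} applied to the $K_i$, dispose of the cases $\GKdim kG(H)=0,2$ via Proposition \ref{xxpro3.1}, and in the rank-one case exhibit a Hopf surjection onto a GK-dimension-one Hopf domain, which satisfies $(\natural)$ by Lemma \ref{xxlem1.1}. The paper only deploys this quotient trick for type B (its ``alternative proof''), deferring types A and C to ``repeating the proof of Theorem \ref{xxthm4.2}'' to identify $H$ as $A_G$ or $C_G$ outright; you apply the quotient trick uniformly to all three types, which is arguably cleaner since it avoids redoing the classification. Two cosmetic points: in the rank-two case the group need not be abelian (it may be $L\rtimes_\varphi R$), though $\Hom(G,k)=\Hom(G^{\mathrm{ab}},k)\neq0$ still holds since $G^{\mathrm{ab}}$ surjects onto $R$; and the affine pieces of a degenerate (commutative) $C_G(e,\tau)$ are of type A rather than type C, which your separate commutative sub-case absorbs.

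The one step whose stated justification is wrong is the compatibility claim in the noncommutative type A case. The generator $z$ of $A(n,q)$ \emph{can} commute with its weight $x^n$ even when $q\neq1$ (e.g.\ $A(2,-1)$: $xz=-zx$ but $x^2z=zx^2$), so ``quasi-commutes with, but does not commute with, its weight'' fails; moreover, by Proposition \ref{xxpro3.4}(2) the element $z$ sitting inside a larger term $A(n',q')$ may be a combination of $z'^{d'}$ and $1-x'^{d'n'}$, i.e.\ a multiple of a \emph{power} of the generator rather than of the generator itself. Both defects are repairable: write $z=\alpha w+\beta(1-g)$ with $w\in\{z',z'^{d'}\}$ and conjugate by the grouplike $x$ generating $G(L)$ rather than by the weight --- since $xzx^{-1}=qz$ with $q\neq1$ while $x$ fixes $1-g$ and scales $w$, comparing the $kG(L')$-components forces $\beta=0$; and $z'^{d'}$ is still killed by the projection $L'\twoheadrightarrow kG(L')$, so the projections remain compatible and the limit surjection $H\twoheadrightarrow kG(H)$ goes through. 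With that repair (and a sentence verifying that the affine type A/B/C pieces drawn from different $K_i$ form a directed family), the argument is complete.
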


\begin{proof} By Theorem \ref{xxthm0.1} each $K_i$
is pointed and generated by grouplikes and 
skew primitives. Hence $H$ is pointed and 
generated by grouplikes and 
skew primitives.

It is well-known that all
algebras in Proposition \ref{xxpro3.1} 
satisfy $(\natural)$. Therefore we
can assume that $\GKdim C_0(H)
=\GKdim G(H)=1$. 

Now repeating the proof of
Theorem \ref{xxthm4.2} gives the result. 
The hard part is concerning type B, where
we give an alternative proof as below.

If all $K_i$ are of type B, let $I_i$ be the Hopf
ideal of $K_i$ generated by all nontrivial skew 
primitives that quasi-commute with their weight.
Then $K_i/I_i\cong kG(K_i)$, which induces an injection 
$K_i/I_i\subseteq K_{i+1}/I_{i+1}$ for all $i$.
It is easy to see that $I_i=I_{i+1}\cap K_i\subseteq I_{i+1}$. 
Let $I=\bigcup_i I_i$. Then $I$ is a Hopf ideal of $H$ 
and $H/I$ is a union of Hopf subalgebras isomorphic to $K_i/I_i$. 
As a consequence, $H/I\cong kG(H)$ where $G(H)$ is a 
nontrivial subgroup of $\QQ$. Since $kG(H)$ satisfies
$(\natural)$ (Lemma \ref{xxlem1.1}), so does $H$. 
\end{proof}

%%%%%%%%%%%%%%%%%%%%%%%%%%%%%%%%%%%%%%%%%
%%%%%%%%%%%%%%%%%%%%%%%
\section{Other properties}
\label{xxsec5}
In this section we will prove Corollary \ref{xxcor0.2} 
and Theorem \ref{xxthm0.5}. We use some ideas of 
Takeuchi \cite{Ta1}. Some parts of the proofs were also 
suggested by Quanshui Wu \cite{Wu}. We would like to thank Wu for 
sharing his comments and proofs with us.

%%%%%%%%%%%%%%%%%%%%%%%
\subsection{Takeuchi's idea}
\label{xxsec5.1}
In this subsection we review some ideas of Takeuchi \cite{Ta1}.
The following lemma was proved in \cite{Ta1} 
in the commutative case and the proof works for a general Hopf algebra.
Let $H^{+}$ be the kernel of the counit of $H$. 

\begin{lemma}
\label{xxlem5.1} \cite[Lemma 3.9]{Ta1}
Let $H$ be a Hopf algebra. Suppose that $K$ and  $K'$ are Hopf subalgebras
of $H$ such that $K' \subseteq K$. Then there is a right $H$-module 
isomorphism:
$$K \otimes_{K'} H \xrightarrow{\tau} (K/{K(K^{\prime +})}) \otimes H, 
\quad \tau : x \otimes y \mapsto \sum {\bar x_1}\otimes x_2y,$$
with the inverse map
$$(K/{K(K^{\prime +})}) \otimes H \xrightarrow{\mu}
K \otimes_{K'} H, \quad \mu : {\bar u} \otimes 
v \mapsto \sum u_1 \otimes S(u_2) v.$$
\end{lemma}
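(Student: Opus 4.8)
The plan is to verify by hand that the two displayed formulas define mutually inverse right $H$-module maps; the argument is purely formal, so nothing about commutativity of $H$ is needed (as the remark before the lemma indicates). Throughout I would write $K^{\prime+}=K'\cap\ker\epsilon$, understand $K(K^{\prime+})$ to be the \emph{left} ideal $K\cdot K^{\prime+}$ of $K$ (this choice is forced: a two-sided ideal would break well-definedness of $\mu$), and let $x\mapsto\bar x$ be the projection $K\to K/K(K^{\prime+})$. Both $\tau$ and $\mu$ are visibly right $H$-linear, since the $H$-action touches only the last tensor leg while the coproducts appearing in the formulas act on the other legs. So the content is: (i) $\tau$ descends to $K\otimes_{K'}H$; (ii) $\mu$ descends to $(K/K(K^{\prime+}))\otimes H$; (iii) $\mu\tau=\id$ and $\tau\mu=\id$.

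For (i), I would check $\tau(xa\otimes y)=\tau(x\otimes ay)$ for $x\in K$, $a\in K'$, $y\in H$. Expanding $\Delta(xa)=\sum x_1a_1\otimes x_2a_2$ and writing $a_1=\epsilon(a_1)1+(a_1-\epsilon(a_1)1)$ with $a_1-\epsilon(a_1)1\in K^{\prime+}$ (using $\Delta(K')\subseteq K'\otimes K'$), the key point is that $x_1(a_1-\epsilon(a_1)1)\in K(K^{\prime+})$, so $\overline{x_1a_1}=\epsilon(a_1)\bar x_1$ in $K/K(K^{\prime+})$. The counit axiom $\sum\epsilon(a_1)a_2=a$ then collapses $\sum\overline{x_1a_1}\otimes x_2a_2y$ to $\sum\bar x_1\otimes x_2ay=\tau(x\otimes ay)$.

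For (ii), I must show $\sum u_1\otimes S(u_2)v=0$ in $K\otimes_{K'}H$ whenever $u\in K(K^{\prime+})$; by linearity it suffices to take $u=ka$ with $k\in K$, $a\in K^{\prime+}$. Then $\Delta(ka)=\sum k_1a_1\otimes k_2a_2$ gives $\sum u_1\otimes S(u_2)v=\sum k_1a_1\otimes S(a_2)S(k_2)v$; since $K'$ is a Hopf subalgebra we have $S(a_2)\in K'$, which may be slid across the balancing symbol to yield $\sum k_1a_1S(a_2)\otimes S(k_2)v=\sum\epsilon(a)k_1\otimes S(k_2)v=0$, using $\sum a_1S(a_2)=\epsilon(a)1$ and $\epsilon(a)=0$. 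For (iii), both composites are one-line Sweedler computations: $\mu\tau(x\otimes y)=\sum(x_1)_1\otimes S((x_1)_2)x_2y=\sum x_1\otimes S(x_2)x_3y=\sum x_1\otimes\epsilon(x_2)y=x\otimes y$ (coassociativity together with $\sum S(x_1)x_2=\epsilon(x)1$), and symmetrically $\tau\mu(\bar u\otimes v)=\sum\overline{(u_1)_1}\otimes(u_1)_2S(u_2)v=\sum\bar u_1\otimes u_2S(u_3)v=\sum\bar u_1\otimes\epsilon(u_2)v=\bar u\otimes v$ (using $\sum u_1S(u_2)=\epsilon(u)1$).

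I expect the only genuinely delicate step to be (ii): it is the single place where one must simultaneously invoke that $K'$ is closed under the antipode and that elements of $K'$ can be transported across the balanced tensor product over $K'$, and it is what dictates that $K(K^{\prime+})$ must be the left ideal $KK^{\prime+}$ and not a two-sided ideal. Steps (i) and (iii) are then routine Sweedler-notation bookkeeping.
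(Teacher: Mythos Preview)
Your proposal is correct and follows essentially the same approach as the paper's proof: the same Sweedler computations verify well-definedness of $\tau$ and $\mu$ and that they are mutual inverses, with the key step for $\mu$ being to slide $S(a_2)\in K'$ across the balanced tensor and invoke $\sum a_1S(a_2)=\epsilon(a)1$. Your explicit remark that $K(K^{\prime+})$ must be read as the left ideal (and that this is forced by the well-definedness of $\mu$) is a useful clarification not spelled out in the paper.
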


\begin{proof} Since the proof is very nice, we include it here.

For any $x \in K$, $z \in K'$, and $y \in H$, we have
$$
\begin{aligned}
\sum x_1z_1 \otimes x_2z_2y - \sum x_1 \otimes x_2zy
=&\sum x_1z_1 \otimes x_2z_2y - \sum x_1\epsilon (z_1) \otimes x_2z_2y \\
=&\sum x_1(z_1 - \epsilon (z_1)) \otimes x_2z_2y \in KK^{\prime +} \otimes H.
\end{aligned}
$$
This implies that the map $\tau$ is well-defined by the definition of 
the tensor product.
On the other hand, for any $u=xz \in K(K^{\prime +})$, where 
$x \in K$, $z \in K^{\prime +}$, and $v \in H$, we have
$$\begin{aligned}
\sum u_1 \otimes_{K'} S(u_2)v &= \sum x_1z_1 \otimes_{K'} S(x_2z_2)v
=  \sum x_1z_1 S(z_2) \otimes_{K'} S(x_2)v \\
&= \sum x_1 \epsilon (z) \otimes_{K'} S(x_2)v =0.
\end{aligned}
$$
Hence $\mu$ is well defined. 

It is easy to see that
$$\begin{aligned}
\mu \tau (x \otimes_{K'} y)&=\mu \biggl(\sum {\bar x_1}\otimes x_2y \biggr) 
=\sum x_1 \otimes_{K'} S(x_2)x_3y\\
&= \sum x_1 \otimes_{K'} \epsilon (x_2)y=x \otimes_{K'} y,
\end{aligned}
$$
and
$$\tau \mu ({\bar u} \otimes v) =\tau \biggl(\sum u_1 \otimes_{K'} S(u_2)y \biggr)
=\sum {\bar u_1} \otimes u_2 S(u_3)v ={\bar u} \otimes v.$$
Therefore $\tau$ is invertible with the inverse $\mu$.  
\end{proof}

There is also a left $H$-module version of the above lemma.
The next lemma is also well-known (even before \cite{Ta1})
and is used in the study of ``faithfully flat descent''
when $A$ is commutative. Since the proof is short, it is 
included here.

\begin{lemma}
\label{xxlem5.2}
Suppose that $B \subseteq A$ is a ring extension such that $A_B$ 
(or ${}_BA$) is faithfully flat. Then the sequence $0 \longrightarrow B 
\longrightarrow A \xrightarrow{ \; f \; } A \otimes_B A$, 
where $f$ is the map sending $x \mapsto x \otimes 1 - 1 \otimes x$, 
is exact.
\end{lemma}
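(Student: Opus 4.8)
The statement is the exactness of the complex $0 \to B \xrightarrow{\iota} A \xrightarrow{f} A \otimes_B A$, where $\iota$ is the inclusion. First I would record the trivial points: this is a complex of $B$-bimodules, since for $b\in B$ one has $f(b) = b\otimes 1 - 1\otimes b = 0$ in $A\otimes_B A$, and $\iota$ is injective by hypothesis. Hence the only content is exactness at $A$, i.e. $\ker f = B$. The plan is to prove this by faithfully flat descent. Assume first that ${}_BA$ is faithfully flat (the other hypothesis is handled symmetrically, applying $A\otimes_B -$ in place of $-\otimes_B A$). Since $\iota$ and $f$ are right $B$-linear, the functor $-\otimes_B A$ may be applied to the complex; as it is exact (flatness) and reflects exactness (faithful flatness), it suffices to show that the base-changed complex is exact.

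\textbf{The base-changed complex.} Using the canonical isomorphism $B\otimes_B A \cong A$ together with associativity of $\otimes_B$, the complex $(0\to B\to A\to A\otimes_B A)\otimes_B A$ becomes
\[
0 \longrightarrow A \xrightarrow{\;g\;} A\otimes_B A \xrightarrow{\;h\;} A\otimes_B A\otimes_B A,
\]
where $g(a) = 1\otimes a$ and $h(x\otimes a) = x\otimes 1\otimes a - 1\otimes x\otimes a$. The key observation is that this sequence is split exact, with a contracting homotopy assembled from the multiplication map of $A$: put $s(x\otimes a) = xa$ and $t(x\otimes y\otimes a) = x\otimes ya$ (both well defined over the possibly noncommutative $B$). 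A short computation then gives $sg = \mathrm{id}_A$, so $g$ is injective, and $gs + th = \mathrm{id}_{A\otimes_B A}$; combined with $hg = 0$ this yields $\ker h = \operatorname{im} g$, that is, exactness of the base-changed complex.

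\textbf{Descent and the main obstacle.} To finish I would use faithful flatness of ${}_BA$ once more: since $-\otimes_B A$ is exact it commutes with the homology of $0\to B\to A\to A\otimes_B A$, so each homology module of the latter is killed by $-\otimes_B A$ and hence is zero, which is the asserted exactness. I expect the only delicate point to be the bookkeeping: verifying that $\iota$ and $f$ are module maps on the side compatible with the chosen base change (so that each of the two stated hypotheses genuinely suffices), that $s$ and $t$ are well defined when $B$ is noncommutative, and that the signs in the homotopy are arranged so that $gs + th = \mathrm{id}$ holds exactly. Each of these is routine, but they are precisely where a careless argument would slip.
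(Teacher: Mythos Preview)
Your argument is correct: the contracting homotopy $s(x\otimes a)=xa$, $t(x\otimes y\otimes a)=x\otimes ya$ does split the base-changed complex, and faithful flatness then pulls exactness back. This is the standard Amitsur-complex splitting, and all the well-definedness and sidedness checks you flag go through as stated.

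The paper's proof is shorter and proceeds differently. Rather than base-changing the entire complex and exhibiting a homotopy, it sets $C=\ker f$ and works directly with the inclusion $B\subseteq C$: by flatness one has embeddings $A\otimes_B B\hookrightarrow A\otimes_B C\hookrightarrow A\otimes_B A$, and then the single computation $a\otimes c = a(1\otimes c) = a(c\otimes 1) = ac\otimes 1$ (using the defining property $1\otimes c = c\otimes 1$ of $c\in C$) shows the first embedding is onto, so $A\otimes_B(C/B)=0$ and faithful flatness finishes. In effect the paper applies your retraction $s$ only to elements already known to lie in $\ker h$, short-circuiting the need to introduce $h$ and $t$ at all. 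Your route is more systematic and makes visible that the whole Amitsur complex splits after base change; the paper's route is a two-line element chase tailored to this one spot.
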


\begin{proof}
Let $C=\ker(f)=\{x\in A\mid x\otimes 1=1\otimes x\}$. It is clear that 
$B \subseteq C$.  Since $A_B$ is flat, there are embeddings
$$A=A\otimes_B B\hookrightarrow A\otimes_B C \hookrightarrow A\otimes_B A.$$
For any element $a\otimes c\in A\otimes_B C$, we obtain that
$$ a \otimes c=a(1\otimes c)=a(c\otimes 1)=ac\otimes 1,$$
which implies that the map 
$(A=)A\otimes_B B\to A\otimes_B C$
is surjective. Consequently, $A\otimes_B C/B=0$. Since $A_B$ is faithfully 
flat, $C/B=0$ and $C=B$ as desired. 
\end{proof}

\begin{proposition}
\label{xxpro5.3} Let $H$ be a Hopf algebra and $K\subsetneq H$
be a Hopf subalgebra of $H$. Suppose that 
$H_{K}$ is faithfully flat. Then $HK^{+}\neq H^{+}$.
\end{proposition}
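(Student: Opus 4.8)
The plan is to argue by contradiction: assume $HK^{+} = H^{+}$ and deduce that $H = K$, contrary to hypothesis. Observe first that $K^{+} = K\cap H^{+}$, so $HK^{+}\subseteq H^{+}$ automatically; the assertion of the proposition is the strictness of this inclusion. The two ingredients are the module isomorphism of Lemma \ref{xxlem5.1} and the exactness statement of Lemma \ref{xxlem5.2}, the faithful flatness of $H_{K}$ being used exactly once, via the latter. Conceptually this is a faithfully flat descent argument in the spirit of Takeuchi.

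First I would apply Lemma \ref{xxlem5.1} with the roles of ``$K$'' and ``$K'$'' there played by $H$ and our $K$ respectively (legitimate since $K$ is a Hopf subalgebra of $H$), obtaining a right $H$-module isomorphism $\tau\colon H\otimes_{K}H \xrightarrow{\ \sim\ } (H/HK^{+})\otimes H$ with $\tau(x\otimes y)=\sum \bar x_{1}\otimes x_{2}y$. Under the contradiction hypothesis $HK^{+}=H^{+}=\ker\epsilon$, the quotient $H/HK^{+}$ is one-dimensional, spanned by $\bar 1$, with $\bar h=\epsilon(h)\bar 1$ for all $h\in H$; identifying $(H/HK^{+})\otimes H$ with $H$ via $\bar 1\otimes z\mapsto z$, the counit axiom $\sum\epsilon(x_{1})x_{2}=x$ shows that $\tau$ turns into the multiplication map $m\colon H\otimes_{K}H\to H$, $x\otimes y\mapsto xy$. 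In particular $m$ is bijective.

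Next I would invoke Lemma \ref{xxlem5.2} with $B=K$ and $A=H$: since $H_{K}$ is faithfully flat, the sequence $0\to K\to H\xrightarrow{\,f\,}H\otimes_{K}H$ with $f(x)=x\otimes 1-1\otimes x$ is exact, so $K=\ker f$. Composing with the bijection $m$ gives $m(f(x))=x\cdot 1-1\cdot x=0$ for every $x\in H$, hence $f=0$, hence $K=\ker f=H$, contradicting $K\subsetneq H$. The argument involves essentially no computation; the only step requiring care is the verification that $\tau$ degenerates to the multiplication map once $H/HK^{+}$ has been identified with $k$ via $\epsilon$, i.e.\ keeping the various identifications straight. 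I do not anticipate a genuine obstacle beyond this bookkeeping.
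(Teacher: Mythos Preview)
Your proof is correct and follows essentially the same route as the paper's: both combine Lemma~\ref{xxlem5.1} (with $K' = K$ and the outer algebra $H$) and Lemma~\ref{xxlem5.2} to show that $\tau\circ f$ has kernel exactly $K$, and then observe that if $H/HK^{+}=k$ this composite vanishes. The only difference is packaging: you argue by contradiction and make explicit that under the identification $H/HK^{+}\cong k$ the map $\tau$ collapses to multiplication, whereas the paper argues directly that $K\ne H$ forces $\tau\circ f\ne 0$ and hence $H/HK^{+}\ne k$, leaving the verification that $\tau\circ f$ would be zero when $H/HK^{+}=k$ implicit.
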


\begin{proof} We modify Takeuchi's proof \cite{Ta1}.
By Lemma \ref{xxlem5.2}, the sequence
$$0 \to K \longrightarrow H \xrightarrow{ \; f \; } H \otimes_{K} H$$
is exact, where $f$ is the map $x \mapsto x \otimes 1 - 1\otimes x$. 
By Lemma \ref{xxlem5.1}, 
$$\tau: x \otimes y \mapsto \sum {\bar x_1} \otimes x_2y$$ 
gives rise to an isomorphism $H \otimes_{K} H \to H/{HK^{+}} 
\otimes H$. Hence
$$0 \to K\longrightarrow H \xrightarrow{\tau\circ f} H/{HK^{+}} 
\otimes H,$$ where 
$\tau \circ f: x \mapsto \sum {\bar x_1} \otimes x_2 - 1\otimes x$,
is exact. Since $K\neq H$, the map $\tau\circ f\neq 0$, 
which implies that $H/HK^{+}\neq k$. This completes the proof. 
\end{proof}

%%%%%%%%%%%%%%%%%%%%%%%
\subsection{Some consequences}
\label{xxsec5.2}
If $H$ is pointed, then it satisfies (FF) \cite{Ra2}.
We now prove Theorem \ref{xxthm0.5}.

\begin{theorem}
\label{xxthm5.4} Let $H$ be a left noetherian Hopf algebra.
\begin{enumerate}
\item[(1)]
If $H$ satisfies {\rm{(FF)}}, then $H$ is of $S$-finite type. 
As a consequence, $\dim_k H$ is countable.
\item[(2)]
If $H$ is locally affine and satisfies {\rm{(FF)}}, 
then $H$ is affine. 
\item[(3)]
If $H$ is pointed, then $H$ is affine.
\end{enumerate}
\end{theorem}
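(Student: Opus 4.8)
The plan is to prove (1) directly, playing Proposition \ref{xxpro5.3} off against left Noetherianity, and then to deduce (2) and (3) as essentially formal consequences.

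For (1), I would first assemble a directed family of Hopf subalgebras that exhausts $H$. Every element of $H$ lies in a finite-dimensional subcoalgebra $W$, and for each such $W$ the subalgebra $A(W):=k\langle \bigcup_{i\ge 0}S^i(W)\rangle$ is a Hopf subalgebra of $H$: it is visibly stable under $S$, and since $S$ is both an algebra and a coalgebra anti-homomorphism one checks by induction on $i$ that $\Delta(S^i(W))\subseteq S^i(W)\otimes S^i(W)$, so $A(W)$ is stable under $\Delta$ as well. As $W$ ranges over the finite-dimensional subcoalgebras of $H$, the Hopf subalgebras $A(W)$ form a directed family (because $W_1+W_2$ is again such a subcoalgebra), and $\bigcup_W A(W)=H$ since $\bigcup_W W=H$. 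Now consider the family of left ideals $\{HA(W)^+\}_W$. Since $H$ is left Noetherian, this nonempty family has a maximal member, say $HA(W_0)^+$; by directedness and maximality, $HA(W)^+\subseteq HA(W_0)^+$ for every $W$, so $\sum_W HA(W)^+=HA(W_0)^+$. On the other hand $\sum_W A(W)^+=H^+$ (each $A(W)^+\subseteq H^+$, and any $h\in H^+$ lies in some $W$, hence in $A(W)^+$) and $H\cdot H^+=H^+$, so $\sum_W HA(W)^+=H^+$. Thus $HA(W_0)^+=H^+$. But if $A(W_0)\subsetneq H$, then, since $H$ satisfies (FF) and hence $H_{A(W_0)}$ is faithfully flat, Proposition \ref{xxpro5.3} forces $HA(W_0)^+\neq H^+$, a contradiction. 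Therefore $H=A(W_0)=k\langle\bigcup_i S^i(W_0)\rangle$, i.e.\ $H$ is of $S$-finite type. For the stated consequence, $\bigcup_i S^i(W_0)$ spans a subspace of countable dimension, and $H$, being spanned by all finite products of elements of that subspace, is a countable union of countable-dimensional subspaces and hence of countable dimension.

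For (2), use (1) to choose a finite-dimensional $V$ with $H=k\langle\bigcup_i S^i(V)\rangle$. Since $H$ is locally affine, a finite basis of $V$ is contained in an affine Hopf subalgebra $\Omega$ of $H$ by Lemma \ref{xxlem3.8}(1); as $\Omega$ is stable under $S$, we get $S^i(V)\subseteq\Omega$ for all $i$, whence $H\subseteq\Omega$ and $H=\Omega$ is affine. For (3), a pointed Hopf algebra satisfies (FF) by \cite{Ra2} and is locally affine by Lemma \ref{xxlem3.8}(4), so (2) applies.

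The substance of the proof is concentrated in (1), and its main input, Proposition \ref{xxpro5.3}, is already in hand, so the argument is largely a matter of organization. The points that demand care are the verification that each $A(W)$ is genuinely a Hopf subalgebra (closure under $\Delta$), the identity $\sum_W A(W)^+=H^+$, and the extraction, via left Noetherianity, of a largest member of $\{HA(W)^+\}_W$; once these are in place, the clash with Proposition \ref{xxpro5.3} is immediate and there is no serious obstacle remaining.
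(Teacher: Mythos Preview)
Your proof is correct and follows essentially the same approach as the paper: both use left noetherianity to produce a proper Hopf subalgebra $K$ with $HK^+ = H^+$, and then invoke Proposition~\ref{xxpro5.3} to obtain a contradiction; parts (2) and (3) are handled identically. Your organization of (1) is slightly more direct---you use the maximal condition on the full directed family $\{HA(W)^+\}_W$, whereas the paper first reduces (via faithfully flat descent of noetherianity) to a countable strictly ascending chain $K_n$ and applies ACC to $\{HK_n^+\}$---but this is a minor streamlining rather than a different route.
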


\begin{proof} (1) Suppose $H$ is not of $S$-finite type.
Let $K_0=k$ and define a sequence of $S$-finite type Hopf 
subalgebras $K_n$ inductively. Suppose $K_n$ is generated
by $\bigcup_{i=0}^{\infty} S^i(V_n)$ where $V_n$ is a finite
dimensional subcoalgebra of $H$. Let $W$ be a 
finite dimension subcoalgebra of $H$ such that $W\nsubseteq K_n$.
Let $V_{n+1}=V_{n}+W$ and $K_{n+1}$ be the Hopf subalgebra
generated by $\bigcup_{i=0}^{\infty} S^i(V_{n+1})$. By definition,
$K_n\neq K_{n+1}$ and each $K_{n}$ is of $S$-finite type.
Let $K=\bigcup_{n} K_n$. Then $K$ is a Hopf subalgebra that
is not of $S$-finite type. Since $H$ satisfies (FF), $H_K$ 
is faithfully flat. Then $K$ is left noetherian. We may replace
$H$ by $K$ and assume that $H=\bigcup_{n} K_n$ without loss of 
generality. 

Since $H$ is left noetherian, there is an $N$ such that 
$H K_n^{+}=HK_N^{+}$ for $n\geq N$. Since $H=\bigcup_{n} K_n$,
$$HK_N^{+}=\bigcup_{n\geq N} HK_n^{+}=H \bigcup_{n\geq N} K_n^{+}
=HH^{+}=H^{+}.$$
By Proposition \ref{xxpro5.3}, $H^{+}
\neq HK_N^{+}$, yielding a contradiction. Therefore, $H$ is of
$S$-finite type. 

Any Hopf algebra of $S$-finite type is countably generated 
and so has countable $k$-dimension.

(2) This follows from part (1) and the fact $S$-finite type plus 
local affineness imply that $H$ is affine. 

(3) If $H$ is pointed, then it is (FF) by \cite{Ra2}. By 
Lemma \ref{xxlem3.8}(4), $H$ is locally affine. The assertion
follows from part (2).
\end{proof}

%%%%%%%%%%%%%%%%%%%%%%%%%%%%%%%%%%%%%%%%%
%%%%%%%%%%%%%%%%%%%%%%%
\subsection{Global dimension}
\label{xxsec5.3}

In this subsection we will prove Corollary \ref{xxcor0.2}.
The following lemma is known.

\begin{lemma}\cite[Corollary 1]{Be}, \cite[Proposition 2.1]{Os}
\label{xxlem5.5} Let $A$ be an algebra and $\{A(n)\}_{n=1}^\infty$ 
an ascending chain of subalgebras of $A$ such that
$A=\bigcup_n A(n)$. Then
%\begin{enumerate}
%\item[(1)]
%$$\injdim A\leq \max\{\injdim A(n)\mid \forall \; n\}+1.$$
%\item[(2)] 
$$\gldim A\leq \max\{\gldim A(n)\mid \forall \; n\}+1.$$
%\end{enumerate}
\end{lemma}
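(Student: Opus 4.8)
The plan is to reduce everything to a single statement about modules: writing $d := \max_n \gldim A(n)$, which we may assume is finite (otherwise there is nothing to prove), it suffices to show $\pdim_A M \le d+1$ for every left $A$-module $M$. The engine is the fact that $A = \varinjlim_n A(n)$, so $M$ can be reconstructed from a countable direct system of ``induced'' modules. For each $n$, regard $M$ as a left $A(n)$-module by restriction and set $P_n := A\otimes_{A(n)} M$; this is a left $A$-module equipped with an augmentation $\varepsilon_n\colon P_n\to M$, $a\otimes m\mapsto am$, and, for $n\le m$, a transition map $P_n\to P_m$, $a\otimes m\mapsto a\otimes m$, and one checks that $M = \varinjlim_n P_n$ compatibly with the $\varepsilon_n$.

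First I would record the mapping telescope short exact sequence
$$0 \longrightarrow \bigoplus_{n\ge 1} P_n \xrightarrow{\ \psi\ } \bigoplus_{n\ge 1} P_n \longrightarrow M \longrightarrow 0,$$
where $\psi$ is the identity minus the shift induced by the transition maps; its injectivity and the identification of its cokernel with $\varinjlim_n P_n = M$ are the standard facts about telescopes of countable direct systems. Applying $\Hom_A(-,N)$ for an arbitrary left $A$-module $N$, and using $\Ext^i_A\!\bigl(\bigoplus_n P_n, N\bigr) \cong \prod_n \Ext^i_A(P_n, N)$, the long exact $\Ext$-sequence attached to this short exact sequence shows that if $\Ext^i_A(P_n,N) = 0$ for all $i > d$ and all $n$, then $\Ext^i_A(M,N) = 0$ for all $i > d+1$. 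Since $M$ and $N$ are arbitrary this gives $\gldim A \le d+1$, so the whole problem is reduced to the bound $\pdim_A P_n \le d$ for each $n$.

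For that last bound I would take a projective $A(n)$-resolution $0\to Q_d\to\cdots\to Q_0\to M\to 0$ of length $\le d$ (available since $\gldim A(n)\le d$) and apply $A\otimes_{A(n)}-$, obtaining a length-$d$ complex of projective $A$-modules augmented to $P_n$; the only obstruction to its being a resolution is measured by the groups $\Tor^{A(n)}_{i}(A,M)$ for $i>0$, and controlling these is the main obstacle. When they vanish — for instance whenever $A$ is flat, or free, as a right $A(n)$-module, which is exactly the situation in the Hopf-algebra applications of this paper by the freeness of $H$ over its Hopf subalgebras — one obtains $\pdim_A P_n \le d$ immediately and the proof is complete; in full generality one invokes the change-of-rings analysis of Berstein and Osofsky to absorb the $\Tor$ terms. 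In any case, the ``$+1$'' in the statement is precisely the length of the two-term telescope resolution, i.e.\ a reflection of the countability of the chain $\{A(n)\}$.
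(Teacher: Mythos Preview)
The paper gives no proof of this lemma; it is simply quoted from \cite{Be} and \cite{Os}. So there is no ``paper's approach'' to compare against --- your attempt is being measured against the cited literature.

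Your telescope argument is the right skeleton, and the reduction to bounding $\pdim_A P_n$ with $P_n = A\otimes_{A(n)} M$ is sound. The gap is exactly where you flag it: the inequality $\pdim_A(A\otimes_{A(n)} M)\le d$ does \emph{not} follow just from applying $A\otimes_{A(n)}-$ to a length-$d$ projective $A(n)$-resolution of $M$, because the resulting complex of $A$-projectives need not be acyclic --- its higher homology is $\Tor^{A(n)}_i(A,M)$, and these groups have no a priori projective-dimension bound over $A$. (Concretely, take $A(n)=k[t]\subseteq A=k[t,x]/(x^2,tx)$ and $M=k$: then $\pdim_{A(n)}M=1$ while $\pdim_A(A\otimes_{A(n)}M)=\infty$; of course $d=\infty$ here, so this does not contradict the lemma, but it does show your proposed change-of-rings bound fails.) Your suggestion to ``invoke the change-of-rings analysis of Berstein and Osofsky to absorb the Tor terms'' is circular: that analysis \emph{is} the content of the lemma you are proving.

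That said, you correctly observe that the argument is complete whenever $A$ is flat over each $A(n)$, and you correctly note that this is precisely the situation in the paper's only use of the lemma (the proof of Corollary~\ref{xxcor0.2}(5)), since there $H$ is free over each Hopf subalgebra by part~(4). So for the purposes of this paper your proof is adequate; for the lemma in the stated generality, you would need to supply the missing ingredient rather than cite the result itself.
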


\begin{proof}[Proof of Corollary {\rm\ref{xxcor0.2}}]
(1) and (2) follow by construction.

(3) This is a consequence of \cite[Theorem A(ii)]{Sk}.

(4) This follows by \cite{Ra2}.

(5) If $H$ is in Theorem \ref{xxthm0.1}(1-5),
then every affine Hopf subalgebra $K$ of $H$ of GK-dimension 2
has global dimension 2 by the proof of 
\cite[Proposition 0.2(1)]{GZ}. Since $H_K$ and $_KH$ are free (see part (4)), 
by \cite[Theorem 7.2.6]{MR},
$\gldim H\geq \gldim K=2$. By Lemmas \ref{xxlem3.11} and 
\ref{xxlem5.5}, $\gldim H\leq 3$.

If $H$ is in Theorem \ref{xxthm0.1}(6),
then there is an affine Hopf subalgebra $K$ of $H$ of GK-dimension 2
which has global dimension $\infty$ by the proof of 
\cite[Proposition 0.2(1)]{GZ}. Since $H_K$ and $_KH$ are free (see part (4)), 
by \cite[Theorem 7.2.8(i)]{MR},
$\gldim H\geq \gldim K=\infty$. 
\end{proof}

%%%%%%%%%%%%%%%%%%%%%%%

%%%%%%%%%%%%%%%%%%%%

\end{document}